\title[Nonlinear Schr\"odinger equation]{Nonlinear Schr\"odinger equation with unbounded or vanishing potentials: solutions
concentrating on lower dimensional spheres}
\author{Denis Bonheure}
\address{
  D{\'e}partement de Math{\'e}matique\\
  Universit{\'e} libre de Bruxelles, CP 214\\
  Boulevard du Triomphe, 1050 Bruxelles, Belgium}
\email{denis.bonheure@ulb.ac.be}
\author{Jonathan Di Cosmo}
\address{
  D{\'e}partement de Math{\'e}matique\\
  Universit{\'e} cahtolique de Louvain\\
  Chemin du Cyclotron 2, 1348 Louvain-la-Neuve, Belgium}
\address{
  D{\'e}partement de Math{\'e}matique\\
  Universit{\'e} libre de Bruxelles, CP 214\\
  Boulevard du Triomphe, 1050 Bruxelles, Belgium}
\email{Jonathan.DiCosmo@uclouvain.be}
\thanks{Jonathan Di Cosmo is a research fellow of the Fonds de la Recherche Scientifique--FNRS}
\author{Jean Van Schaftingen}
\address{
  D{\'e}partement de Math{\'e}matique\\
  Universit{\'e} cahtolique de Louvain\\
  Chemin du Cyclotron 2, 1348 Louvain-la-Neuve, Belgium}
\email{Jean.VanSchaftingen@uclouvain.be}
\date{February 4, 2010}
\newcommand{\R}{\mathbb{R}}
\newcommand{\N}{\mathbb{N}}
\newcommand{\dist}{\mathop{\mathrm{dist}}}  		
\newcommand{\norm}[1]{\left\| #1 \right\|}		
\newcommand{\abs}[1]{\left| #1 \right|}			
\newcommand{\Dp}[2]{\frac{\partial #1}{\partial #2}}	
\newtheorem{proposition}{Proposition}[section]
\newtheorem{theorem}{Theorem}
\newtheorem{lemma}[proposition]{Lemma}
\newtheorem{corollary}[proposition]{Corollary}
\keywords{Stationary nonlinear Schr\"odinger equation; semiclassical states; semilinear elliptic problem; singular potential; vanishing potentiel; radial solution; concentration on submanifolds}
\subjclass{35J65 (35B05, 35B25, 35B40, 35J20, 35Q55)}
\begin{document} 

\begin{abstract}
 We study positive bound states for the equation
\[
	- \varepsilon^2 \Delta u + V(x)u = K(x)f(u), \hspace{1cm}  x \in \R^N,
\]
where $\varepsilon > 0$ is a real parameter and $V$ and $K$ are radial positive potentials. We are especially interested in
solutions which concentrate on a $k$-dimensional sphere, $1 \leq k \leq N-1$, as $\varepsilon \rightarrow 0$. We adopt a
purely variational approach which allows us to consider broader classes of potentials than those treated in previous
works. For example, $V$ and $K$ might be singular at the origin or vanish superquadratically at infinity.
\end{abstract}

\maketitle

\section{Introduction}
We consider the nonlinear Schr\"odinger equation
\begin{align}\label{NLS}
i \hbar \Dp{\psi}{t} = -\frac{\hbar^2}{2m} \Delta \psi + W(x) \psi - \abs{\psi}^{p-1}\psi, \hspace{1cm} (t,x) \in \R
\times \R^N,
\end{align}
which appears for instance in nonlinear optics or condensed matter physics. A \textit{standing wave} solution of
\eqref{NLS} is a solution of the form
\begin{align*}
\psi(t,x) = e^{-iEt/\hbar} u(x),
\end{align*}
where $E$ is the energy of the wave. The function $\psi$ is a standing wave solution of \eqref{NLS} if and only if $u$
is a solution of the semilinear elliptic equation
\begin{align}\label{p1}
 - \varepsilon^2 \Delta u + V(x)u = \abs{u}^{p-1}u, \hspace{1cm}  x \in \R^N,
\end{align}
 where $\varepsilon^2 = \hbar^2/2m$ and $V(x) = W(x) - E$. It is a \textit{bound state} if $u \in H^1(\R^N)$. 
From a physical point of view, one expects to recover the laws of classical mechanics when $\hbar \rightarrow 0$. It is
thus interesting to study the behaviour of the solutions of \eqref{p1} as $\varepsilon$ tends to $0$.
The bound states of \eqref{p1} with $\varepsilon$ small are referred to as \textit{semiclassical states}. 

It is well known that problem \eqref{p1} possesses solutions which exhibit concentration phenomena as $\varepsilon
\rightarrow 0$. More precisely, these solutions converge uniformly to $0$ outside some concentration set, while
remaining uniformly positive in the concentration set. This concentration set can be either a point, a finite set of
points or a manifold. 

The solutions concentrating around one or several isolated points have been intensively studied (see for example
\cites{AM,BVS} and their bibliographies). 

On the other hand, one can ask if there exist solutions of \eqref{p1} concentrating on a higher dimensional set. This problem has been solved for some specific higher dimensional sets. Solutions concentrating on curves have been found recently in \cite{MMM},
see also \cite{DKW} for the case $N=2$ and \citelist{\cite{MaMa}\cite{Mal}} for a Neumann singularly perturbed problem. Here we shall
restrict ourselves to the problem of solutions concentrating around spheres. In several recent papers
\citelist{\cite{BeDA}\cite{BaDA}\cite{AMN}\cite{AR}\cite{ByW}\cite{BaP}}, solutions concentrating on $(N-1)$-dimensional spheres have been found. In \cite{MP},
solutions concentrating on $(N-2)$-dimensional spheres are investigated. 

We focus on solutions
concentrating around a $k$-dimensional sphere in $\R^N$, $1 \leq k \leq N-1$. The existence of such solutions has been
discussed in remarks in \cite{AM2}, \cite{AM}, \cite{ByW}.
Particular problems arise in the \textit{critical frequency} case, namely when $\inf_{\R^N} V = 0$. These problems have been
tackled in \cite{AR} and \cite{ByW}.

\begin{theorem}[Ambrosetti-Ruiz \cite{AR}]\label{ThAR}
Assume that $p > 1$, that $V \in C^1(\R^N)$ is a positive bounded radially symmetric potential, that $\nabla V$ is bounded and that 
\[
 \liminf_{\abs{x} \to \infty} V(x)\abs{x}^2 > 0.
\]
If there exists $r^*$ such that the function $\mathcal{M} : (0, \infty) \to \R$ defined for $r > 0$ by
\begin{align}\label{def:Mr}
 \mathcal{M}(r) := r^{N-1} \left[V(r)\right]^{\frac{p+1}{p-1}-\frac{1}{2}}
\end{align}
has an isolated local maximum or minimum at $r = r^*$, then, for $\varepsilon > 0$ small enough, equation \eqref{p1} has a
positive radially symmetric solution $u_{\varepsilon} \in H^1(\R^N)$ that concentrates at the sphere $\abs{x} = r^*$.
\end{theorem}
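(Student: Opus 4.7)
The plan is to exploit the radial symmetry via the principle of symmetric criticality, so as to work in $H^1_{\mathrm{rad}}(\R^N)$ and reduce the problem to the one-dimensional weighted ODE
\[
 -\varepsilon^2\Bigl(v'' + \frac{N-1}{r}v'\Bigr) + V(r)\, v = v^p, \qquad r > 0,
\]
associated with the functional
\[
 J_\varepsilon(v) = \omega_{N-1}\int_0^\infty \left(\frac{\varepsilon^2}{2}(v')^2 + \frac{V(r)}{2} v^2 - \frac{(v^+)^{p+1}}{p+1}\right) r^{N-1}\,dr.
\]
The assumption $\liminf_{\abs{x}\to\infty} V(x)\abs{x}^2 > 0$ is used to guarantee the relevant compact embeddings of the weighted radial space and to control $v$ at infinity.

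The key heuristic is that, after the blow-up $s = (\rho-r)/\varepsilon$ near a candidate concentration radius $r$, the profile should resemble the unique positive even ground state $U_r$ of the limit equation $-U'' + V(r)U = U^p$ on $\R$. By explicit scaling, $U_r(s) = V(r)^{1/(p-1)} U_0\bigl(V(r)^{1/2} s\bigr)$ where $U_0$ is the ground state of $-U_0'' + U_0 = U_0^p$, and its one-dimensional energy factorises as $\mathcal{E}(U_r) = E_0\, V(r)^{(p+1)/(p-1)-1/2}$. Inserting such a one-bump trial function into $J_\varepsilon$, changing variables and expanding in $\varepsilon$ gives
\[
 J_\varepsilon(u_{\varepsilon,r}) = \varepsilon\, \omega_{N-1}\, E_0\, \mathcal{M}(r) + o(\varepsilon),
\]
so that $\mathcal{M}$ is the leading-order reduced energy along the concentration manifold and its critical points are the candidates for concentration radii.

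To make this heuristic rigorous I would adopt a Del Pino--Felmer style penalization: modify the nonlinearity outside a fixed neighbourhood $(r^*-\delta,r^*+\delta)$ so that Palais--Smale holds at the levels of interest, and then apply a min-max procedure to the penalized functional. The assumption that $\mathcal{M}$ has an \emph{isolated} extremum at $r^*$ supplies the topological obstruction that separates the resulting min-max level from trivial or unwanted critical values: in the minimum case one constrains the concentration location to a small neighbourhood of $r^*$ and the constraint is inactive by the strict inequality $\mathcal{M}(r^*) < \mathcal{M}(r)$ on a punctured neighbourhood; in the maximum case one builds a mountain-pass linking between ansatz barriers at $r^*\pm\delta$. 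Once a critical point $u_\varepsilon$ of the penalized functional is obtained, standard exponential decay estimates on the rescaled profile show that $u_\varepsilon$ is negligible outside $(r^*-\delta, r^*+\delta)$, hence solves the original equation; positivity follows from the maximum principle.

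The main obstacle, in my view, is the uniform energy expansion required to detect the isolated extremum of $\mathcal{M}$: the $o(\varepsilon)$ remainder must be controlled uniformly in the concentration radius and must be strictly negligible with respect to the variation of $\mathcal{M}$ near $r^*$. This calls for fine coercivity estimates for the linearisation at $U_r$ in the weighted one-dimensional space, together with careful bookkeeping of the interaction between the weight $r^{N-1}$ and the concentration layer of width $\varepsilon$. The boundedness of $\nabla V$ is used precisely to control the oscillation of $V(r)$ across that layer.
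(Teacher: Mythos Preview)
This theorem is not proved in the paper at all: it is quoted verbatim from Ambrosetti--Ruiz \cite{AR} as background, and the only comment the paper makes on its proof is the single sentence ``Theorem~\ref{ThAR} relies on a Lyapunov--Schmidt reduction method.'' There is therefore nothing in the present paper to compare your argument against line by line.

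That said, two remarks are in order. First, your proposed route is \emph{not} the one taken in \cite{AR}: Ambrosetti and Ruiz perform a finite-dimensional Lyapunov--Schmidt reduction onto the manifold of translated one-dimensional ground states, obtain $\mathcal{M}$ as the leading term of the reduced functional, and then find critical points of the reduced functional near any isolated extremum (maximum or minimum) of $\mathcal{M}$. Your approach is instead a Del~Pino--Felmer penalization, which is precisely the method the present paper uses for its \emph{own} results (Theorems~\ref{theo:special} and~\ref{Th:main}). So in spirit you are closer to this paper than to \cite{AR}.

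Second, and more substantively, your sketch has a genuine gap in the local \emph{maximum} case. The penalization scheme as you describe it --- and as implemented in this paper --- produces a least-energy solution of the modified problem and then shows, via the upper/lower energy estimates, that its concentration point realises $\inf_{\Lambda}\mathcal{M}$. This is exactly suited to isolated local \emph{minima} of $\mathcal{M}$ (compare hypothesis~\eqref{hyp:Lambda} and Theorem~\ref{theo:special}, which only treats minima). For an isolated local \emph{maximum} you cannot simply ``build a mountain-pass linking between ansatz barriers at $r^*\pm\delta$'': the min-max on the full functional does not see the one-parameter family of concentration radii as a path, and turning the heuristic $J_\varepsilon(u_{\varepsilon,r})\approx \varepsilon\,\omega_{N-1}E_0\,\mathcal{M}(r)$ into a rigorous min-max identification requires either a reduction step (which is what \cite{AR} does) or a substantially more delicate localized min-max with an additional barycentre-type constraint. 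Your proposal does not supply that ingredient, so as written it would establish only the minimum half of Theorem~\ref{ThAR}.
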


The problem in \cite{ByW} is rather different. The potential $V$ vanishes and the solutions concentrate around zeroes of
$V$. The asymptotic behaviour depends on the shape of $V$ around $0$.

Theorem \ref{ThAR} relies on a Lyapunov-Schmidt reduction method. The aim of this note is to examine possible
improvements in the previous results that can be obtained by using the penalization method, a variational method
originally due to Del Pino and Felmer \cite{DF1} and adapted to our framework in the papers \citelist{\cite{BVS}\cite{MVS}}. This method
permits us to treat superquadratically decaying potentials, or even compactly supported potentials.
 
Our results include the following simple particular case.
\begin{theorem}\label{theo:special}
 Let $N \geq 3$, $p>\frac{N}{N-2}$ and $V \in C(\R^N \backslash \left\{ 0 \right\},\R^+)$ be a
radial potential. If there exists $r^* > 0$ such that the function $\mathcal{M}(r)$ defined by
\eqref{def:Mr} has an isolated local minimum at $r = r^*$ such that $\mathcal{M}(r^*)>0$, then for
$\varepsilon$ small enough, the equation \eqref{p1} has a positive radially symmetric solution
$u_{\varepsilon}$ that concentrates on the sphere of radius $r^*$.
\end{theorem}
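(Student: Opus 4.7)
\smallskip

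\noindent\textbf{Proof plan.} Since $V$ is radial, the natural strategy is to work on the radial Sobolev subspace and exploit the variational structure of \eqref{p1}, so that $\mathcal{M}$ plays the role of an effective one-dimensional potential. Writing $u(x)=v(|x|)$, the energy becomes
\begin{equation*}
 J_{\varepsilon}(v) = \omega_{N-1}\int_0^{\infty}\Bigl(\tfrac{\varepsilon^2}{2}\abs{v'}^2 + \tfrac{1}{2}V(r)v^2 - \tfrac{1}{p+1}\abs{v}^{p+1}\Bigr) r^{N-1}\,dr.
\end{equation*}
If one formally rescales a localized profile around $r=r^{*}$ by $v(r)=\alpha(r^{*})\,w((r-r^{*})/\varepsilon)$ with $\alpha(r^{*})=V(r^{*})^{1/(p-1)}$, the mountain-pass level of $J_{\varepsilon}$ turns out to be of order $\varepsilon^{N-1}\,c_{0}\,\mathcal{M}(r^{*})$, where $c_{0}$ is the ground-state energy of $-w''+w=\abs{w}^{p-1}w$ on $\R$. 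This is precisely why an isolated strict local \emph{minimum} of $\mathcal{M}$ at $r^{*}$ is expected to produce a concentrating critical point.

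\smallskip

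\noindent The plan is then to implement the Del Pino--Felmer penalization \cite{DF1}, in the version adapted in \cite{BVS,MVS} to potentials that can blow up or decay fast. First I would fix a thin spherical annulus $A_{\delta}=\{r^{*}-\delta<|x|<r^{*}+\delta\}$ isolating the minimum of $\mathcal{M}$ and replace the nonlinearity $|u|^{p-1}u$ outside $A_{\delta}$ by a truncated $\tilde g(x,u)$ satisfying $\tilde g(x,u)\le V(x)u/\kappa$ for some fixed $\kappa>1$; inside $A_{\delta}$ the nonlinearity is unchanged. Working on $H^{1}_{\mathrm{rad}}$, the penalized functional $J_{\varepsilon}^{\mathrm{pen}}$ has a good mountain-pass geometry and satisfies the Palais--Smale condition thanks to the penalization and to the radial compactness (the hypothesis $p>N/(N-2)$ is what guarantees the relevant radial embedding even for potentials singular at the origin or vanishing at infinity). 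This yields, for every small $\varepsilon$, a positive radial critical point $u_{\varepsilon}$ of $J_{\varepsilon}^{\mathrm{pen}}$.

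\smallskip

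\noindent The next step, and the main obstacle, is to show that $u_{\varepsilon}$ is a solution of the \emph{original} equation, i.e. that $u_{\varepsilon}$ is so small outside $A_{\delta}$ that the penalization is inactive there. I would compare the mountain-pass level of $J_{\varepsilon}^{\mathrm{pen}}$ with the ``local'' mountain-pass level obtained by truncating everything to $A_{\delta}$; the strict minimum assumption on $\mathcal{M}(r^{*})$ forces the concentration to occur inside $A_{\delta}$, and a concentration-compactness argument identifies a rescaled limit $w$ solving $-w''+V(r^{*})w=|w|^{p-1}w$ on $\R$. From this limit profile one derives exponential decay away from the concentration sphere by building radial super-solutions of the form $Ce^{-c\,\mathrm{dist}(x,S_{r^{*}})/\varepsilon}$ on the (weighted) one-dimensional operator $-\varepsilon^{2}(r^{N-1}v')'+r^{N-1}V(r)v/\kappa$; such barriers go through precisely because the penalized nonlinear term is dominated by $V/\kappa$. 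This forces $u_{\varepsilon}\le$ the penalization threshold outside $A_{\delta}$ for $\varepsilon$ small, so $\tilde g(x,u_{\varepsilon})=|u_{\varepsilon}|^{p-1}u_{\varepsilon}$ and $u_{\varepsilon}$ solves \eqref{p1}. The hard point here is to obtain these decay estimates uniformly when $V$ is only assumed continuous, possibly singular at the origin and vanishing at infinity, without the $C^{1}$ bounds used in Theorem \ref{ThAR}; this is exactly what the penalization framework of \cite{BVS,MVS} is designed to handle.

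\smallskip

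\noindent Finally, the concentration statement follows: the same barriers show that $u_{\varepsilon}\to 0$ uniformly outside any neighbourhood of $S_{r^{*}}=\{|x|=r^{*}\}$, while the energy lower bound matching $\varepsilon^{N-1}c_{0}\mathcal{M}(r^{*})$ prevents $u_{\varepsilon}$ from vanishing on $S_{r^{*}}$, giving the desired semiclassical concentration on the sphere of radius $r^{*}$.
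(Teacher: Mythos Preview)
Your overall architecture---penalize outside an annulus, obtain a mountain-pass solution, pin down the concentration by matching upper and lower energy bounds against $\mathcal{M}$, then remove the penalization via barrier functions---is exactly the paper's strategy; Theorem~\ref{theo:special} is deduced from the general Theorem~\ref{Th:main} with $K\equiv 1$, $f(u)=u^p$, $k=N-1$. However, two of your technical choices would not survive the generality of the statement, and you misidentify where the hypothesis $p>\tfrac{N}{N-2}$ enters.

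First, the penalization you describe, $\tilde g(x,u)\le V(x)u/\kappa$ outside $A_\delta$, is the classical Del~Pino--Felmer truncation, and it gives no control wherever $V$ vanishes. Since the theorem allows $V$ to vanish (even to be compactly supported), this is fatal both for Palais--Smale and for the comparison argument. The paper instead penalizes by
\[
 g_\varepsilon(x,s)\le \bigl(\varepsilon^2 H(x)+\mu V(x)\bigr)s,\qquad H(x)=\frac{\kappa}{\abs{x}^2\bigl((\log\abs{x})^2+1\bigr)^{(1+\beta)/2}},
\]
with $\kappa<\bigl(\tfrac{N-2}{2}\bigr)^2$, so that Hardy's inequality makes $-\Delta-H$ positive on $\mathcal{D}^{1,2}(\R^N)$. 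This Hardy term is what drives both the compactness (Proposition~\ref{prop:PS}) and the comparison principle (Proposition~\ref{Th:comp}) when $V$ gives nothing.

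Second, and consequently, the barriers cannot be globally exponential: once $V\equiv 0$ the operator $-\varepsilon^2\Delta$ has no exponentially decaying supersolutions. The paper builds $W_\varepsilon$ that decay like $\exp(-\lambda d_{\mathcal H}(x,x_\varepsilon)/\varepsilon)$ only near the concentration sphere and like $(1+\abs{x})^{-(N-2)}$ at infinity (Lemmas~\ref{lemmaPsiepsilon}--\ref{lemma:barrier}). The condition $p>\tfrac{N}{N-2}$ is then used, not for any radial embedding, but precisely at the last step: from $u_\varepsilon(x)\le C e^{-\lambda/\varepsilon}(1+\abs{x})^{-(N-2)}$ one gets
\[
 \frac{f(u_\varepsilon(x))}{u_\varepsilon(x)}\le C e^{-\lambda(p-1)/\varepsilon}(1+\abs{x})^{-(N-2)(p-1)},
\]
and this is $\le \varepsilon^2 H(x)$ at infinity exactly when $(N-2)(p-1)>2$, i.e.\ $p>\tfrac{N}{N-2}$ (this is condition $(\mathcal G_\infty^1)$ with $K\equiv 1$). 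So the exponent threshold governs whether the polynomial barrier beats the Hardy penalty, not compactness of an embedding.
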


If $N \geq 5$, one has also that $u_\varepsilon \in L^2(\R^N)$ (see Corollary~\ref{cor:L2}).

In contrast with Theorem \ref{ThAR}, we do not require any boundedness assumption on $V$ or its derivatives, and we
treat potentials $V$ which are singular at the origin or vanish superquadratically at infinity.

Theorem \ref{theo:special} is a particular case of Theorem \ref{Th:main} below, which deals with a nonlinearity which is
neither necessarily homogeneous nor autonomous, see equation \eqref{p1f} below. Furthermore, we will find solutions
concentrating on a $k$-dimensional sphere, $1 \leq k \leq N-1$. In this case, the critical exponent to be taken into
consideration is $p_k = \frac{N-k+2}{N-k-2}$ if $N-k \geq 3$, $p_k = \infty$ if $N-k = 1,2$. We also obtain results for $N=2$ with a little more care, see Section \ref{rem:N=2}.

Let us point out that if $V$ is compactly supported and $p \leq \frac{N}{N-2}$, then equation \eqref{p1} has no positive
solution in the neighborhood of infinity, see the discussion in \cite{MVS}.

Assuming that the potential $V$ is cylindrically symmetric, we can reduce \eqref{p1} to a problem in $\R^{N-k}$. The
single-peaked solutions of this problem can then be extended to $\R^N$ by symmetry. In this way, we obtain a solution of
\eqref{p1} concentrating around a $k$-dimensional sphere. Observe that since the reduced problem is in $\R^{N-k}$, the
critical exponent to be considered is the one in dimension $N-k$. This allows for example to treat critical problems by
looking for cylindrically symmetric (non necessarily radial) solutions.

\section{Assumptions and main result}
We shall study the equation with a more general nonlinearity
\begin{align}\label{p1f}
 - \varepsilon^2 \Delta u + V(x)u = K(x)f(u), \hspace{1cm}  x \in \R^N.
\end{align}

Let $k$ be a fixed integer such that $1 \leq k \leq N-1$. This number $k$ is the dimension of the
sphere on which we want to construct concentrating solutions. Let us choose any
$(N-k-1)$-dimensional linear subspace $\mathcal{H} \subset \R^N$. We denote by $\mathcal{H}^{\bot}$
the orthogonal complement of $\mathcal{H}$. If $x \in \R^N$, we will write $x = (x',x'')$ with $x'
\in \mathcal{H}$ and $x'' \in \mathcal{H}^{\bot}$.

\subsection{The potentials}
We consider a nonnegative potential $V \in C(\R^N \backslash \left\{ 0 \right\})$ and a nonnegative competing function
$K \in C(\R^N \backslash \left\{ 0 \right\})$, $K \not\equiv 0$. 
We assume that for every $R \in \mathbf{O}(N)$ such that $R(\mathcal{H})=\mathcal{H}$, we have $V \circ R = V$ and $K \circ R = K$. This will be the case if for example $V$ and $K$ are radial functions.

\subsection{The nonlinearity}
We make classical assumptions on $f$ that lead to a good minimax characterization of the infimum on the Nehari
manifold. Namely, we assume that $f : \R^+ \rightarrow \R^+$ is continuous and that
\begin{itemize}
 \item[$(f_1)$] there exists $q>1$ such that $f(s) = O(s^q)$ as $s \rightarrow 0^+$,
 \item[$(f_2)$] there exists $p>1$ such that $\frac{1}{p+1} > \frac{1}{2} - \frac{1}{N-k}$ and $f(s) = O(s^p)$ as $s
\rightarrow \infty$,
 \item[$(f_3)$] there exists $2 < \theta \leq p+1$ such that
 \begin{align*}
  0 < \theta F(s) \leq f(s)s \hspace{1cm} \text{for}\ s>0,
 \end{align*}
 where $F(s) := \int_0^s f(\sigma) d\sigma$,
 \item[$(f_4)$] the function
 \begin{align*}
 s \mapsto \frac{f(s)}{s}
 \end{align*}
 is nondecreasing.
\end{itemize}
Notice that $(f_2)$ is nothing but the subcriticality condition in dimension $N-k$.

\subsection{The growth conditions}
Following \citelist{\cite{BVS}\cite{MVS}} we impose one of the three sets of growth conditions at infinity :
\begin{itemize}
\item[$(\mathcal{G}_{\infty}^1)$] there exists $\sigma < (N-2)q-N$ such that
\[
  \limsup_{\abs{x} \to \infty} \frac{K(x)}{\abs{x}^\sigma} < \infty;
\]
\item[$(\mathcal{G}_{\infty}^2)$] there exists $\sigma \in \R$ such that
 \begin{align*}
  \liminf_{\abs{x} \rightarrow \infty} V(x) \abs{x}^{2} &> 0\ &&\text{ and } &  \limsup_{\abs{x} \to \infty} \frac{K(x)}{\abs{x}^\sigma} &<\infty;
 \end{align*}
\item[$(\mathcal{G}_{\infty}^3)$] there exist $\alpha < 2$ and $\sigma \in \R$ such that
 \begin{align*}
  \liminf_{\abs{x} \rightarrow \infty} V(x) \abs{x}^{\alpha} &> 0\ & & \text{ and }\ &\limsup_{\abs{x} \to \infty} \frac{K(x)}{\exp ( \sigma
\abs{x}^{\frac{2-\alpha}{2}} )} &< \infty.
 \end{align*}
\end{itemize}
Note that in comparison with \cite{BVS}, in $(\mathcal{G}_{\infty}^2)$ and $(\mathcal{G}_{\infty}^3)$, $V$ might vanish somewhere.
We also impose one of the three sets of growth conditions at the origin, which mirror those at infinity :
\begin{itemize}
\item[$(\mathcal{G}_{0}^1)$] there exists $\tau > -2$, such that
\[
   \limsup_{\abs{x} \to 0} \frac{K(x)}{\abs{x}^{\tau}} < \infty,
\]
\item[$(\mathcal{G}_{0}^2)$] there exists $\tau \in \R$ such that
 \begin{align*}
  \liminf_{\abs{x} \rightarrow 0} V(x) \abs{x}^{2} &> 0\ &&\text{ and }\ &\limsup_{\abs{x} \to 0} \frac{K(x)}{\abs{x}^{\tau}} < \infty;
 \end{align*}
\item[$(\mathcal{G}_{0}^3)$]
 there exist $\gamma > 2$ and $\tau \in \R$ such that
 \begin{align*}
  \liminf_{\abs{x} \rightarrow 0} V(x) \abs{x}^{\gamma} &> 0\ &&\text{ and }\ & \limsup_{\abs{x} \to 0} \frac{K(x)}{\exp ( \tau
\abs{x}^{-\frac{\gamma-2}{2}} )} &< \infty.
 \end{align*}
\end{itemize}

By Kelvin transform, there is a duality between the conditions at the origin and the conditions at infinity, at least in the case where $f(t)=t^p$. If one defines $\Hat{u}$ to be the Kelvin transform of $u$, i.e., 
\[
 \Hat{u}(x)=\frac{1}{\abs{x}^{N-2}} u\Bigl(\frac{x}{\abs{x}^2} \Bigr)
\]
and the transformed potentials
\begin{align*}
 \Hat{V}(x)=\frac{1}{\abs{x}^4}V\Bigl(\frac{x}{\abs{x}^2} \Bigr)
\end{align*}
and
\begin{align*}
 \Hat{K}(x)=\frac{1}{\abs{x}^{N+2-p(N-2)}}K\Bigl(\frac{x}{\abs{x}^2} \Bigr),
\end{align*}
the function $u_\varepsilon$ solves \eqref{p1f} if and only if $\Hat{u}_\varepsilon$ solves the same problem with $\Hat{V}$ and $\Hat{K}$ in place of $V$ and $K$.
One sees that $V$, $K$ satisfy $(\mathcal{G}_{0}^i)$ if and only if $\Hat{V}$ and $\Hat{K}$  satisfy $(\mathcal{G}_{\infty}^i)$.

 The problem at the origin is in a sense in duality with the one at infinity. Whereas a slow decay of $V$ at infinity
does allow a lot of freedom for $K$, a strong singularity at the origin allows for very singular $K$'s too. The critical
threshold growth is $1/|x|^2$ both at the origin and at infinity. This can be made clearer if we observe that the
optimal barrier functions at the origin are the optimal one at infinity mapped by Kelvin transform.

\subsection{The auxiliary potential}\label{sect:Lambda}
Before we can state our last assumption, we need a few preliminaries. Let $a,b > 0$. The equation
\begin{align}\label{plim}
	 -\Delta u + au = bf(u) \hspace{1cm} \text{in} \ \R^{N-k}
\end{align}
is called the \textit{limit equation} associated with \eqref{p1f}. The weak solutions of \eqref{plim} are critical
points of the functional $\mathcal{I}_{a,b} : H^1(\R^{N-k}) \rightarrow \R$ defined by
\begin{align}\label{defIab}
 \mathcal{I}_{a,b}(u) := \frac{1}{2} \int_{\R^{N-k}} \left(  \abs{\nabla u}^2 + a u^2 \right)\: dx - b \int_{\R^{N-k}}
F(u)\: dx.
\end{align}
Any nontrivial critical point  $u \in H^1(\R^{N-k})$ of $\mathcal{I}_{a,b}$, belongs to
the Nehari manifold
\begin{align*}
 \mathcal{N}_{a,b} := \left\{ u \in H^1(\R^{N-k}) \ \vert\ u \not\equiv 0\ \text{and}\ \langle \mathcal{I}_{a,b}'(u),u
\rangle = 0 \right\}.
\end{align*}
A solution $u \in H^1(\R^{N-k})$ is a \textit{least-energy solution} of \eqref{plim} if
\begin{align*}
 \mathcal{I}_{a,b}(u) = \inf_{v \in \mathcal{N}_{a,b}} \mathcal{I}_{a,b}(v).
\end{align*}
The \textit{ground-energy function} is defined by
\begin{align*}
 \mathcal{E} : \R^+\times \R^+ \rightarrow \R^+ : (a,b) \mapsto \mathcal{E}(a,b) := \inf_{u \in \mathcal{N}_{a,b}}
\mathcal{I}_{a,b}(u),
\end{align*}
and the \textit{auxiliary potential} $\mathcal{M} :\ \R^{N} \rightarrow (0,+\infty]$ by
\[
 (x',x'') \mapsto \mathcal{M}(x',x'') := \left\{ \begin{array}{ll} \abs{x''}^{k} \mathcal{E}\left(V(x),K(x)\right) &
\text{if}\ K(x) > 0, \\
 +\infty & \text{if}\ K(x)=0.
 \end{array} 
 \right.
\]
The following lemma states some properties of the ground-energy function, see \cite{BVS}*{Lemma 3}.
\begin{lemma}
 Assume $f : \R^+ \rightarrow \R^+$ is a continuous function that fulfills assumptions ($f_1$)-($f_4$). Then, for every
$(a,b) \in \R^+_0 \times \R^+_0$, $\mathcal{E}(a,b)$ is a critical value of $\mathcal{I}_{a,b}$ and we have
\begin{align*}
 \mathcal{E}(a,b) = \inf_{\substack{u \in H^1(\R^N) \\ u \neq 0}} \max_{t \geq 0} \; \mathcal{I}_{a,b}(tu).
\end{align*}
If $u \in \mathcal{N}_{a,b}$ and $\mathcal{E}(a,b) = \mathcal{I}_{a,b}(u)$, then $u \in C^1(\R^N)$ and up to a
translation, $u$ is a radial function such that $\nabla u(x) \cdot x < 0$ for every $x \in \R^N\setminus \{0\}$.
Moreover, the following properties hold:
\begin{itemize}
 \item[(i)] $\mathcal{E}$ is continuous in $\R^+_0 \times \R^+_0$;
 \item[(ii)] for every $b^* \in \R^+_0$, $a \to \mathcal{E}(a,b^*)$ is strictly increasing;
 \item[(iii)] for every $a^* \in \R^+_0$, $b \to \mathcal{E}(a^*,b)$ is strictly decreasing;
 \item[(iv)] for every $\lambda > 0$, $\mathcal{E}(\lambda a, \lambda b) = \lambda^{1-N/2} \mathcal{E}(a,b)$;
 \item[(v)] if $f(u) = u^p$ with $\frac{1}{2} - \frac{1}{N-k} < \frac{1}{p+1} < \frac{1}{2}$, then
\begin{align*}
 \mathcal{E}(a,b) = \mathcal{E}(1,1) a^{\frac{p+1}{p-1}-\frac{N}{2}} b^{-\frac{2}{p-1}}.
\end{align*}
\end{itemize}
\end{lemma}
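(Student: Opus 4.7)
The lemma collects standard properties of the autonomous limit problem \eqref{plim}, so I would build the proof around the Nehari/mountain-pass machinery together with symmetrization and a change of variables. First I would establish the variational characterization and the existence/symmetry of a minimizer, then derive (i)--(v) in turn, since several of them (notably continuity) rely on minimizers being available and on the scaling (iv).

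\textbf{Variational characterization, existence, symmetry.} Assumption $(f_4)$ implies that for every $u \not\equiv 0$ the map $t \mapsto \mathcal{I}_{a,b}(tu)$ attains a unique strict maximum at some $t_u > 0$ for which $t_u u \in \mathcal{N}_{a,b}$; minimizing over $u$ then immediately gives
\[
 \mathcal{E}(a,b) = \inf_{u \neq 0} \max_{t \geq 0} \mathcal{I}_{a,b}(tu).
\]
For existence, I would take a minimizing sequence in $\mathcal{N}_{a,b}$, use $(f_3)$ to bound it in $H^1(\R^{N-k})$, replace each term by its Schwarz symmetrization (which does not increase $\mathcal{I}_{a,b}$ and can be reprojected onto the Nehari manifold via the unique $t_u$), then extract a radial weak limit. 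The compactness of $H^1_{\mathrm{rad}}(\R^{N-k}) \hookrightarrow L^{p+1}$ in the subcritical range given by $(f_2)$ together with weak lower semicontinuity then yields a minimizer $u$. Elliptic bootstrap under $(f_1)$--$(f_2)$ gives $u \in C^1$, and the Gidas--Ni--Nirenberg moving plane argument applied to the positive solution gives the strict radial decrease $\nabla u(x) \cdot x < 0$.

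\textbf{Properties (i)--(v).} For the strict monotonicity (ii), pick a minimizer $u'$ for $\mathcal{E}(a',b)$ with $a < a'$; since $\mathcal{I}_{a,b}(tu') < \mathcal{I}_{a',b}(tu')$ for every $t > 0$, the mountain pass characterization gives
\[
 \mathcal{E}(a,b) \leq \max_{t \geq 0} \mathcal{I}_{a,b}(tu') < \max_{t \geq 0} \mathcal{I}_{a',b}(tu') = \mathcal{E}(a',b),
\]
and (iii) is analogous using $F \geq 0$. Property (iv) is a direct change of variables: $v(x) := u(\sqrt{\lambda}\, x)$ sends solutions of the $(a,b)$-problem to solutions of the $(\lambda a, \lambda b)$-problem, and comparing the energies gives the stated exponent of $\lambda$. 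Property (v) follows by combining (iv) (to reduce to $a=b$) with the homogeneity of $u \mapsto u^p$ (to further reduce $b$ to $1$), yielding the closed-form exponents. For continuity (i), upper semicontinuity follows by plugging a fixed minimizer for $(a,b)$ into the infimum-maximum formula at $(a_n,b_n)$ and passing to the limit; lower semicontinuity uses minimizers $u_n$ for $(a_n, b_n)$, whose $H^1$-norms are controlled via (iv) and (ii)--(iii), so that a weakly convergent subsequence can be extracted and its limit tested against the Nehari constraint at $(a,b)$.

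\textbf{Main obstacle.} The delicate step is the compactness argument behind existence of the minimizer: one must ensure that the symmetrized minimizing sequence does not vanish, which I would rule out using the uniform lower bound on $\|u\|_{L^{p+1}}$ built into the Nehari constraint together with $(f_1)$--$(f_2)$, and one must control the $L^{q+1}$-tail to secure strong convergence of the nonlinear term. Once this is in place, the remaining properties are essentially formal consequences of the mountain pass characterization and scaling.
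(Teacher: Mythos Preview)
The paper does not actually prove this lemma: it simply refers the reader to \cite{BVS}*{Lemma~3}. Your proposal is a correct and standard reconstruction of such a proof---Nehari/mountain-pass characterization under $(f_4)$, Schwarz symmetrization plus Strauss-type compactness for existence, elliptic bootstrap and Gidas--Ni--Nirenberg for the qualitative properties, and direct scaling/comparison arguments for (i)--(v).

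One small caveat worth recording: the compact embedding $H^1_{\mathrm{rad}}(\R^{N-k}) \hookrightarrow L^{p+1}$ that you invoke for existence requires $N-k \geq 2$. When $k=N-1$ the limit problem is one-dimensional and Strauss compactness is unavailable; you would instead translate the minimizing sequence so that its sup is attained at the origin and use $H^1(\R) \hookrightarrow C_0(\R)$ (or simply appeal to the explicit ODE analysis). This does not affect the overall strategy.
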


If $f(u) = u^p$ with $\frac{1}{2} - \frac{1}{N-k} < \frac{1}{p+1} < \frac{1}{2}$, the last property of the preceding
lemma implies the following explicit form of the auxiliary potential:
\[
\label{defM2}
 \mathcal{M}(x',x'') = \mathcal{E}(1,1) \abs{x''}^k \left[V(x)\right]^{\frac{p+1}{p-1} - \frac{N-k}{2}}
\left[K(x)\right]^{\frac{-2}{p-1}}.
\]

Due to the symmetry that we shall impose on the solution (see \eqref{def:spaceH}), the
concentration can only occur in the space $\mathcal{H}^{\bot}$. We assume that there exists a smooth
bounded open set $\Lambda \subset \R^{N}$ such that  
\begin{equation}
\label{eqLambdacapH}
\Bar{\Lambda} \cap \mathcal{H} = \emptyset, \ \Lambda \cap \mathcal{H}^\perp \neq \emptyset,
\end{equation}
for every $R \in \mathbf{O}(N)$ such that $R(\mathcal{H})=\mathcal{H}$, 
\begin{align}
\label{symLambda}
  R(\Lambda)=\Lambda
\end{align}
and
\begin{align}\label{hyp:Lambda}
 0 < \inf_{\Lambda \cap \mathcal{H}^\perp}\mathcal{M} < \inf_{\partial \Lambda \cap \mathcal{H}^\perp}\mathcal{M}.
\end{align}
In the case where $k=N-2$, we shall need the condition
\begin{align}\label{def:Lambda}
 \inf_{\Lambda \cap \mathcal{H}^\perp}\mathcal{M} < 2 \inf_{\Lambda} \mathcal{M}.
\end{align}
By continuity of $\mathcal{M}$ in $\Lambda$, this condition is not restrictive.
Similarly, we can also assume that $V > 0$ on $\overline{\Lambda}$ and that $\mathcal{M}$ is continuous on $\overline{\Lambda}$.

\bigskip

Our main result is the following theorem.
\begin{theorem}\label{Th:main}
Let $N \geq 2$, $V, K \in C(\R^N \backslash \left\{ 0 \right\},\R^+)$ satisfy one set $(\mathcal{G}_{0}^i)$ of growth conditions
at the origin and one set $(\mathcal{G}_{\infty}^j)$ of growth conditions at infinity, and $f$ satisfy assumptions $(f_1)$-$(f_4)$. 
Assume there exists an open
bounded set $\Lambda \subset \R^{N}$ such that \eqref{eqLambdacapH}, \eqref{symLambda}, \eqref{hyp:Lambda} and, if $k=N-2$, \eqref{def:Lambda} hold. Then there exists $\varepsilon_0 > 0$ such that for every $0 < \varepsilon <
\varepsilon_0$, problem \eqref{p1} has at least one positive solution $u_{\varepsilon}$. Moreover, for every $0 < \varepsilon <
\varepsilon_0$, there exists $x_{\varepsilon} \in \Lambda \cap \mathcal{H}^\perp$ such that $u_{\varepsilon}$ attains its maximum at $x_{\varepsilon}$,
\begin{align*}
 \liminf_{\varepsilon \to 0} u_{\varepsilon}(x_{\varepsilon}) &> 0, \\
 \lim_{\varepsilon \to 0} \mathcal{M} (x_{\varepsilon}) &= \inf_{\Lambda \cap \mathcal{H}^\perp} \mathcal{M},
\end{align*}
and there exist $C>0$ and $\lambda > 0$ such that
\begin{align*}
 u_{\varepsilon}(x) &\leq C \exp{\left( -\frac{\lambda}{\varepsilon} \frac{d(x,S^k_{\varepsilon})}{1+d(x,S^k_{\varepsilon})}\right) }
\left( 1+\abs{x}^2 \right)^{\frac{-(N-2)}{2}}, &  \forall x \in \R^N,
\end{align*}
where $S^k_{\varepsilon}$ is the $k$-sphere centered at the origin and of radius $\abs{x''_{\varepsilon}}$.
\end{theorem}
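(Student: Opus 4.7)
The plan is to implement the penalization scheme of Del~Pino--Felmer, in the form adapted for unbounded and vanishing potentials in \cite{BVS,MVS}, combined with a symmetry reduction that converts the desired concentration on a $k$-sphere into concentration on a single point of a weighted problem in dimension $N-k$. Let $G$ be the subgroup of $\mathbf{O}(N)$ that fixes $\mathcal{H}$ pointwise, acting as $\mathbf{O}(k+1)$ on $\mathcal{H}^\perp$. A $G$-invariant function writes $u(x',x'')=v(x',\abs{x''})$, and integrating in polar coordinates on $\mathcal{H}^\perp$ produces the weight $\abs{x''}^k$; hence \eqref{p1f} restricted to the $G$-invariant subspace of $H^1(\R^N)$ becomes a weighted semilinear equation on $\mathcal{H}\times(0,\infty)$, and concentration of $v$ at a point $(x'_0,r_0)$ with $r_0>0$ corresponds precisely to concentration of $u$ on the $k$-sphere $\{x'=x'_0,\ \abs{x''}=r_0\}$. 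Since $V$, $K$, $\Lambda$ are $G$-invariant, this reduction is compatible with every step below.

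I would then freeze the nonlinearity outside $\Lambda$. Fix $\kappa>\theta/(\theta-2)$ and set $\tilde f_\varepsilon(x,s)=f(s)$ on $\Lambda$ and $\tilde f_\varepsilon(x,s)=\min(f(s),V(x)s/(\kappa K(x)))$ elsewhere, so that the effective nonlinearity is dominated by the linear part outside $\Lambda$. The penalized functional
\[
 \mathcal{J}_\varepsilon(u)=\tfrac{1}{2}\int_{\R^N}(\varepsilon^2\abs{\nabla u}^2+V u^2)-\int_{\R^N} K(x)\tilde F_\varepsilon(x,u)
\]
restricted to the $G$-invariant subspace has mountain-pass geometry, and under any combination of $(\mathcal{G}_0^i)$ and $(\mathcal{G}_\infty^j)$ its bounded Palais--Smale sequences are precompact in the weighted Sobolev space dictated by those growth conditions; this is the technical heart of \cite{BVS,MVS}, and the symmetry makes the embeddings more favourable. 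One thus obtains a critical point $u_\varepsilon$ at level $c_\varepsilon$. Taking $x_0\in\Lambda\cap\mathcal{H}^\perp$ nearly minimizing $\mathcal{M}$ there, pasting a rescaled least-energy solution of \eqref{plim} with $a=V(x_0)$, $b=K(x_0)$ along the sphere $\{\abs{x''}=\abs{x_0''}\}$, and computing in the weighted integration, the weight $\abs{x''}^k$ produces exactly the factor $\abs{x_0''}^k$ and yields the sharp upper bound $c_\varepsilon\le\varepsilon^{N-k}\bigl(\inf_{\Lambda\cap\mathcal{H}^\perp}\mathcal{M}+o(1)\bigr)$.

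To recover a solution of the original equation, let $x_\varepsilon$ realize the maximum of $u_\varepsilon$ and set $w_\varepsilon(y)=u_\varepsilon(x_\varepsilon+\varepsilon y)$. Standard rescaling plus elliptic regularity forces $w_\varepsilon$, along a subsequence, to converge on compacts to a nonzero solution of a limit equation in $\R^{N-k}$ at parameters $(V(x_0),K(x_0))$ for some accumulation point $x_0$; matching the lower bound from weak lower semicontinuity with the upper bound above, this limit must be a least-energy solution, giving $\mathcal{M}(x_\varepsilon)\to\inf_{\Lambda\cap\mathcal{H}^\perp}\mathcal{M}$. The strict inequality \eqref{hyp:Lambda} then traps $x_\varepsilon$ in $\Lambda\cap\mathcal{H}^\perp$ and $\liminf u_\varepsilon(x_\varepsilon)>0$ since the limit is nontrivial. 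The condition \eqref{def:Lambda} rules out, in the borderline case $k=N-2$ where $\dim\mathcal{H}^\perp=2$, a symmetric two-bubble splitting across the origin of $\mathcal{H}^\perp$ that would respect $G$-symmetry without producing a usable single concentration point. Finally, to ensure $\tilde f_\varepsilon(\cdot,u_\varepsilon)=f(u_\varepsilon)$, hence that $u_\varepsilon$ solves \eqref{p1f}, one compares $u_\varepsilon$ on $\R^N\setminus\Lambda$ to a supersolution of the form $C(1+\abs{x}^2)^{-(N-2)/2}\exp\bigl(-\lambda d(x,S^k_\varepsilon)/\bigl(\varepsilon(1+d(x,S^k_\varepsilon))\bigr)\bigr)$, chosen to match the barrier behaviour prescribed by the active growth hypotheses; the same comparison simultaneously delivers the exponential-type decay estimate in the statement.

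The principal obstacle is the joint implementation of the penalization and the PS compactness under the wide range of admissible behaviours of $V$ and $K$ (singular at $0$, compactly supported, or vanishing superquadratically at infinity), while preserving the $G$-symmetry. In particular, the weighted function spaces must accommodate the factor $\abs{x''}^k$ together with the weighted Hardy-type inequalities implicit in each of $(\mathcal{G}_0^i),(\mathcal{G}_\infty^j)$, and the barrier construction around $S^k_\varepsilon$ must be sharp enough both to trigger the penalization in all of $\R^N\setminus\Lambda$ and to produce the decay estimate announced in the theorem.
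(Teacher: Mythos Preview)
Your overall strategy matches the paper's, but there is a genuine gap in the symmetry reduction that undermines the localization of the concentration point.

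You work with the group $G$ that fixes $\mathcal{H}$ \emph{pointwise}; the paper instead imposes invariance under all $R\in\mathbf{O}(N)$ with $R(\mathcal{H})=\mathcal{H}$, i.e.\ under $\mathbf{O}(N-k-1)\times\mathbf{O}(k+1)$. With your smaller group a $G$-invariant solution still depends freely on $x'\in\mathcal{H}$, so nothing forces the maximum point $x_\varepsilon=(x'_\varepsilon,x''_\varepsilon)$ to satisfy $x'_\varepsilon=0$. Matching the upper bound $c_\varepsilon\le \varepsilon^{N-k}\omega_k\inf_{\Lambda\cap\mathcal{H}^\perp}\mathcal{M}$ against the one-bubble lower bound only yields $\mathcal{M}(x_0)\le \inf_{\Lambda\cap\mathcal{H}^\perp}\mathcal{M}$, not equality, and certainly not $x_0\in\mathcal{H}^\perp$; concentration at some $x_0$ with $x'_0\ne 0$ and $\mathcal{M}(x_0)<\inf_{\Lambda\cap\mathcal{H}^\perp}\mathcal{M}$ is perfectly compatible with your estimates. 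The paper closes this by using the extra $\mathbf{O}(N-k-1)$ factor: if $\bar x'\ne 0$ and $k<N-2$ (so $\dim\mathcal{H}\ge 2$), one picks arbitrarily many rotations of $\mathcal{H}$ sending $\bar x$ to distinct points, producing a lower bound $\ell\,\omega_k\inf_\Lambda\mathcal{M}$ that eventually exceeds the upper bound. When $k=N-2$ one has $\dim\mathcal{H}=1$, only the reflection $x'\mapsto -x'$ is available, and the resulting lower bound $2\omega_k\inf_\Lambda\mathcal{M}$ beats the upper bound precisely under \eqref{def:Lambda}. So \eqref{def:Lambda} is about ruling out concentration off $\mathcal{H}^\perp$ via a reflection in the one-dimensional $\mathcal{H}$, not about a two-bubble splitting in $\mathcal{H}^\perp$ (and $\dim\mathcal{H}^\perp=N-1$, not $2$, when $k=N-2$).

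A second, more technical point: your penalization $\tilde f_\varepsilon(x,s)=\min\bigl(f(s),V(x)s/(\kappa K(x))\bigr)$ outside $\Lambda$ is the original del~Pino--Felmer truncation, which needs $\inf V>0$. Under $(\mathcal{G}_\infty^1)$ the potential $V$ may vanish identically outside a compact set, and then your truncation gives no control at infinity; Palais--Smale compactness and the final comparison step both break down. The paper (following \cite{MVS}) replaces $V/\kappa$ by $\varepsilon^2 H+\mu V$ with a Hardy-type weight $H(x)=\kappa\abs{x}^{-2}\bigl((\log\abs{x})^2+1\bigr)^{-(1+\beta)/2}$, so that $-\Delta-H$ remains a positive operator and one can run the comparison principle even where $V=0$. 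Without this modification, the claim that ``bounded Palais--Smale sequences are precompact'' under $(\mathcal{G}_\infty^1)$ does not go through, nor does the barrier argument that shows $u_\varepsilon$ solves the unpenalized problem.
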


In the special case where $x_0 \in \Lambda \cap \mathcal{H}^\perp$ is the unique minimizer of $\mathcal{M}$ on $\Lambda \cap \mathcal{H}^\perp$, then $x_\varepsilon \to x_0$, and the solution concentrates around a
$k$--dimensional sphere of radius $\abs{x_0}$ centered at the origin.

One should note that the theorem is valid in dimension 2, but the solutions that are obtained do not decay at infinity in general.

Theorem \ref{theo:special} follows from Theorem~\ref{Th:main} by taking $K \equiv 1$, $f(u) = u^p$ and $k=N-1$. Indeed, we notice that the growth condition $(\mathcal{G}_{0}^1)$ is always satisfied whereas the condition $(\mathcal{G}_{\infty}^1)$ holds if
and only if $(N-2)p - N > 0$, i.e. $p>\frac{N}{N-2}$.

The sequel of the paper is devoted to the proof of Theorem~\ref{Th:main}. In Section \ref{sec:Penal}, we introduce a penalized
problem and prove that it has a least energy solution. In Section \ref{sec:Asympt}, we study the asymptotics of
this solution and in Section \ref{sec:barrier}, we obtain decay estimates of the solution and show that it also solves
the original problem. In all these sections, we assume that $N\geq 3$. The modifications for the case $N=2$ will be addressed
in Section \ref{rem:N=2}.

\section{The penalization scheme}\label{sec:Penal}

We assume that $N\geq 3$. Let $\mathcal{D}(\R^N)$ be the set of compactly supported smooth functions. The homogeneous
Sobolev space $\mathcal{D}^{1,2}(\R^N)$ is the closure of the set of compactly supported smooth functions $\mathcal{D}(\R^N)$ with respect to the norm 
\[
\left( \int_{\R^N}
\abs{\nabla u}^2\: dx \right)^{\frac{1}{2}}. 
\]
Thanks to Sobolev inequality, we have $\mathcal{D}^{1,2}(\R^N) \subset
L^{2^*}(\R^N)$. Let us also recall Hardy's inequality in $\R^N$. One has
\begin{align*}
\left( \frac{N-2}{2} \right)^2 \int_{\R^N} \frac{\abs{u(x)}^2}{\abs{x}^2}\: dx \leq \int_{\R^N} \abs{\nabla u}^2,
\end{align*}
for all $u \in \mathcal{D}^{1,2}(\R^N)$.


Following \cite{MVS}, we define the \emph{penalization potential} $H
: \R^N \to \R$ by
\begin{align*}
 H(x) := \frac{\kappa}{\abs{x}^2 \Bigl( \bigl(\log \abs{x} \bigr)^2+1 \Bigr)^{\frac{1+\beta}{2}}}
\end{align*}
where $\beta > 0$ and $0 < \kappa < (\frac{N-2}{2})^2$.
Notice that for all $x \in \R^N$, we have
\[
 H(x) \leq \frac{\kappa}{\abs{x}^2}.
\]
By Hardy's inequality, we deduce that the quadratic form associated to $- \Delta - H$ is positive, i.e.
\begin{align}\label{positivity}
 \int_{\R^N} \left( \abs{\nabla u}^2 - H u^2 \right) \geq \biggl(  \Bigl( \frac{N-2}{2} \Bigr)^2 - \kappa \biggr)
\int_{\R^N} \frac{\abs{u(x)}^2}{\abs{x}^2}\: dx \geq 0,
\end{align}
for all $u \in \mathcal{D}^{1,2}(\R^N)$. 

This inequality implies the following comparison principle.
\begin{proposition}\label{Th:comp}
Let $\Omega \subset \R^N \setminus\{0\}$ be a smooth domain. Let $v,w \in H^1_{\textnormal{loc}}(\Omega)\cap
C(\overline{\Omega})$ be such
that $\nabla (w-v)_- \in L^2(\Omega)$, $(w-v)_-/\abs{x} \in L^2(\Omega)$ and
\begin{align}\label{comp}
 - \Delta w - H w \geq - \Delta v - H v, \hspace{1cm} \forall x \in \Omega.
\end{align}
If $\partial\Omega \neq \emptyset$, assume also that $w \geq v$ on $\partial\Omega$. Then $w \geq v$ in $\Omega$.
\end{proposition}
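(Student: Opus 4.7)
The plan is to test the differential inequality against the negative part $\varphi := (w-v)_{-}$ and to combine the resulting identity with the Hardy-type positivity \eqref{positivity}.

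First I would verify that $\varphi$ is an admissible test function. Since $\varphi \geq 0$, $\varphi \in H^1_{\textnormal{loc}}(\Omega)$, and, by the hypotheses, $\nabla \varphi \in L^2(\Omega)$ and $\varphi/\abs{x} \in L^2(\Omega)$. If $\partial\Omega \neq \emptyset$, the assumption $w \geq v$ on $\partial\Omega$ together with continuity up to the boundary gives $\varphi = 0$ on $\partial\Omega$, so the extension by zero outside $\Omega$, still denoted $\varphi$, lies in $H^1(\R^N\setminus\{0\})$ with $\nabla \varphi \in L^2(\R^N)$ and $\varphi/\abs{x} \in L^2(\R^N)$. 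Since $\{0\}$ has zero $2$-capacity in $\R^N$ for $N\geq 3$, smooth compactly supported approximants give $\varphi \in \mathcal{D}^{1,2}(\R^N)$.

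Next I would rewrite the differential inequality \eqref{comp} in weak form: for every nonnegative $\psi \in \mathcal{D}(\Omega)$,
\[
\int_\Omega \nabla(w-v)\cdot \nabla \psi \,dx - \int_\Omega H(w-v)\psi \,dx \geq 0,
\]
and extend this to $\psi = \varphi$ by density, using the integrability properties just established. On the set $\{w\geq v\}$ one has $\varphi = 0$ and $\nabla \varphi = 0$, while on $\{w<v\}$ one has $\varphi = -(w-v)$, so $\nabla(w-v)\cdot \nabla \varphi = -\abs{\nabla \varphi}^2$ and $(w-v)\varphi = -\varphi^2$. The inequality becomes
\[
-\int_\Omega \abs{\nabla \varphi}^2 \,dx + \int_\Omega H\varphi^2\,dx \geq 0,
\]
that is, $\int_{\R^N} (\abs{\nabla \varphi}^2 - H\varphi^2)\,dx \leq 0$.

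Finally, the positivity inequality \eqref{positivity} applied to $\varphi \in \mathcal{D}^{1,2}(\R^N)$ yields
\[
\biggl( \Bigl(\frac{N-2}{2}\Bigr)^2 - \kappa \biggr) \int_{\R^N} \frac{\varphi^2}{\abs{x}^2}\,dx \leq \int_{\R^N}\bigl(\abs{\nabla \varphi}^2 - H\varphi^2\bigr)\,dx \leq 0.
\]
Since $\kappa < \bigl(\frac{N-2}{2}\bigr)^2$, this forces $\varphi \equiv 0$, hence $w \geq v$ in $\Omega$. The main subtlety I expect is the approximation step justifying that $\varphi$ is a legitimate test function: one must combine the boundary vanishing, the weighted integrability $\varphi/\abs{x}\in L^2$, and the zero capacity of the origin to truncate away from $\partial\Omega$ and from $0$ and pass to the limit; this is where the two integrability hypotheses in the statement are crucial.
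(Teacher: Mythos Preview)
Your proposal is correct and follows exactly the approach sketched in the paper: multiply the inequality \eqref{comp} by $(w-v)_-$, integrate by parts, and invoke the Hardy-type positivity \eqref{positivity} to conclude that $(w-v)_-\equiv 0$. The paper states this in a single sentence, while you have spelled out the details of why $(w-v)_-$ is an admissible test function, which is indeed the only point requiring care.
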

\begin{proof}
 It suffices to multiply the inequality \eqref{comp} by $(w-v)_-$, integrate by parts and use \eqref{positivity}.
\end{proof}


Fix $\mu \in (0,1)$. We define the penalized nonlinearity $g_{\varepsilon}: \R^N \times \R^+ \rightarrow \R$ by
\[
  g_{\varepsilon}(x,s) := \chi_{\Lambda}(x) K(x) f(s) + \left( 1-\chi_{\Lambda}(x) \right) \min\left\lbrace  \left(
\varepsilon^2 H(x)+\mu V(x)\right) s, K(x)f(s) \right\rbrace.
\]
Let $G_{\varepsilon}(x,s) := \int_0^s g_{\varepsilon}(x,\sigma) d\sigma$. One can check that $g_{\varepsilon}$ is a
Carath\'eodory
function with the following properties :
\begin{itemize}
 \item[($g_1$)] $g_{\varepsilon}(x,s) = o(s),\ s \rightarrow 0^+$, uniformly in compact subsets of $\R^N$.
 \item[($g_2$)] there exists $p>1$ such that $\frac{1}{p+1} > \frac{1}{2} - \frac{1}{N-k}$ and 
 \begin{align*}
  \lim_{s \rightarrow \infty} \frac{g_{\varepsilon}(x,s)}{s^p} = 0,
 \end{align*}
 \item[($g_3$)] there exists $2 < \theta \leq p+1$ such that
 \begin{align*}
 \begin{array}{ll}
  0 < \theta G_{\varepsilon}(x,s) \leq g_{\varepsilon}(x,s)s &\forall x \in \Lambda,\ \forall s>0, \\
  0 < 2 G_{\varepsilon}(x,s) \leq g_{\varepsilon}(x,s)s \leq \left( \varepsilon^2 H(x)+\mu V(x)\right) s^2 &\forall x \notin
\Lambda,\ \forall s>0,
 \end{array}
 \end{align*}
 \item[($g_4$)] the function
 \begin{align*}
 s \mapsto \frac{g_{\varepsilon}(x,s)}{s}
 \end{align*}
 is nondecreasing for all $x \in \R^N$.
\end{itemize}

\bigskip

We look for a positive solution of the penalized equation
\begin{align}\label{p2}\tag{$\mathcal{P}_{\varepsilon}$}
	 - \varepsilon^2 \Delta u + V(x)u = g_{\varepsilon}(x,u) \hspace{1cm} \textnormal{in} \ \R^N
\end{align}
in the Hilbert space
\begin{align*}
 H^1_{V}(\R^N) := \left\{ u \in \mathcal{D}^{1,2}(\R^N)\ \vert\ \int_{\R^N} V u^2 < \infty \right\}
\end{align*}
endowed with the norm
\begin{equation}
\label{eqNorm}
 \norm{u}_{\varepsilon}^2 := \int_{\R^N} \left( \varepsilon^2 \abs{\nabla u}^2 + V u^2 \right).
\end{equation}
We will search for a solution of \eqref{p2} in the closed subspace
\begin{multline}\label{def:spaceH}
 H^1_{V,\mathcal{H}}(\R^N) := \bigl\{ u \in H^1_{V}(\R^N)\ \vert\ \forall R \in \mathbf{O}(N) \ \text{s.t.}\
R({\mathcal{H}}) = {\mathcal{H}},
 u \circ R = u \bigr\}.
\end{multline}

\bigbreak

Define $J_{\varepsilon} : H^1_{V,\mathcal{H}}(\R^N) \rightarrow \R$ by
\begin{align*}
  J_{\varepsilon}(u) := \frac{1}{2} \int_{\R^N} \left( \varepsilon^2
\abs{\nabla u(x)}^2 + V(x)\abs{u(x)}^2 \right)\: dx - \int_{\R^N} G_{\varepsilon}(x,u(x))\: dx.
\end{align*}
The functional $J_{\varepsilon}$ is well defined and of class $C^1(H^1_{V,\mathcal{H}}(\R^N),\R)$.
By the principle of symmetric criticality \cite{Palais}, critical points are weak solutions of \eqref{p2}. Furthermore,
$J_{\varepsilon}$ has the mountain pass
geometry. It remains to show that $J_{\varepsilon}$ satisfies the Palais-Smale condition. The proof below is inspired from
\cite{BVS}. Recall that a sequence $(u_n)_n \subset H^1_{V,\mathcal{H}}(\R^N)$ is a Palais-Smale sequence for
$J_{\varepsilon}$ if
\begin{align*}
 J_{\varepsilon}(u_n) \leq C \hspace{1cm} \textnormal{and}\hspace{1cm} J'_{\varepsilon}(u_n) \rightarrow 0,\ n \rightarrow
\infty.
\end{align*}

\begin{proposition}\label{prop:PS}
 For $\varepsilon$ sufficiently small, every Palais-Smale sequence for $J_{\varepsilon}$ contains a convergent subsequence.
\end{proposition}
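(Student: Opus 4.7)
The plan is to follow the Del~Pino--Felmer penalization strategy in the form adapted to singular or vanishing potentials in \cite{BVS} and \cite{MVS}. It proceeds in three steps: (i) uniform boundedness of any Palais--Smale sequence, by combining the Ambrosetti--Rabinowitz-type condition $(g_{3})$ with the Hardy-type inequality \eqref{positivity}; (ii) extraction of a weak limit that is a critical point; and (iii) upgrading to strong convergence via a Hardy-based tail estimate outside $\Lambda$.

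For boundedness, given a Palais--Smale sequence $(u_{n})$, I would compute
\[
J_{\varepsilon}(u_{n}) - \tfrac{1}{\theta} \langle J_{\varepsilon}'(u_{n}), u_{n}\rangle = \bigl(\tfrac{1}{2}-\tfrac{1}{\theta}\bigr)\norm{u_{n}}_{\varepsilon}^{2} - \int_{\R^{N}}\Bigl(G_{\varepsilon}(x,u_{n}) - \tfrac{1}{\theta} g_{\varepsilon}(x,u_{n}) u_{n}\Bigr) dx.
\]
By $(g_{3})$ the integrand is nonpositive on $\Lambda$ and at most $\bigl(\tfrac{1}{2}-\tfrac{1}{\theta}\bigr)(\varepsilon^{2} H + \mu V) u_{n}^{2}$ on $\R^{N}\setminus\Lambda$. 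Inequality \eqref{positivity} yields $\int \varepsilon^{2} H u^{2} \leq \lambda \norm{u}_{\varepsilon}^{2}$ with $\lambda := 4\kappa/(N-2)^{2} < 1$, and choosing $\mu$ so that $\lambda + \mu < 1$ (compatible with $\mu \in (0,1)$) absorbs the exterior contribution on the left and gives $\norm{u_{n}}_{\varepsilon}\leq C$. Passing to a subsequence, I then extract a weak limit $u_{n}\rightharpoonup u$ in $H^{1}_{V,\mathcal{H}}(\R^{N})$, together with pointwise a.e.\ convergence and strong convergence in $L^{q}_{\textnormal{loc}}(\R^{N}\setminus\{0\})$ for $q\in [2,2^{*})$. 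Passing to the limit in $\langle J_{\varepsilon}'(u_{n}),\varphi\rangle$ against smooth symmetric test functions compactly supported in $\R^{N}\setminus\{0\}$, using $(g_{1})$--$(g_{2})$ and the bound $g_{\varepsilon}(x,s)\leq (\varepsilon^{2}H + \mu V)s$ outside $\Lambda$ for dominated convergence, and then invoking Palais' principle of symmetric criticality \cite{Palais}, yields $J_{\varepsilon}'(u)=0$.

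The decisive step is strong convergence. I would write
\[
\norm{u_{n}-u}_{\varepsilon}^{2} = \langle J_{\varepsilon}'(u_{n}) - J_{\varepsilon}'(u), u_{n}-u\rangle + \int_{\R^{N}}\bigl(g_{\varepsilon}(x,u_{n})-g_{\varepsilon}(x,u)\bigr)(u_{n}-u)\,dx,
\]
where the first term is $o(1)$ by weak convergence and the Palais--Smale assumption. On $\Lambda$, compactness of Sobolev embeddings on the bounded set where $V$ is bounded below, combined with $(g_{2})$ and Vitali's theorem, produces an $o(1)$ contribution. On $\R^{N}\setminus\Lambda$, combining $g_{\varepsilon}(x,s)\leq (\varepsilon^{2}H + \mu V) s$ with the monotonicity of $g_{\varepsilon}(x,\cdot)$ arising from $(g_{4})$ yields the pointwise estimate
\[
\bigl(g_{\varepsilon}(x,u_{n})-g_{\varepsilon}(x,u)\bigr)(u_{n}-u) \leq (\varepsilon^{2}H + \mu V)(u_{n}-u)^{2} + (\varepsilon^{2}H + \mu V) u \abs{u_{n}-u}.
\]
The first term integrates to at most $(\lambda+\mu)\norm{u_{n}-u}_{\varepsilon}^{2}$, while the second is $o(1)$ via a standard split into an annular region (where local compactness in $L^{2}$ applies) and its complement (where $\int(\varepsilon^{2}H + \mu V) u^{2} < \infty$ combined with Cauchy--Schwarz makes the tail arbitrarily small). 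Combining, $(1-\lambda-\mu)\norm{u_{n}-u}_{\varepsilon}^{2}\leq o(1)$, and strong convergence follows.

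The main obstacle is precisely this tail control outside $\Lambda$: since $V$ may vanish at infinity and $K$ may blow up there or at the origin, the usual compactness of $L^{p+1}$-type embeddings is unavailable. The penalized nonlinearity $g_{\varepsilon}$ is tailored exactly so that outside $\Lambda$ it is dominated by the quadratic form $(\varepsilon^{2}H + \mu V) s^{2}$, which Hardy's inequality makes strictly subcritical once $\kappa$ and $\mu$ are small. The subtle technical point is ensuring that the cross term involving $u$ alone really tends to zero rather than merely being bounded; this is where weak convergence in $H^{1}_{V,\mathcal{H}}$, combined with pointwise a.e.\ convergence and the finiteness of $\int(\varepsilon^{2}H + \mu V) u^{2}$, is essential.
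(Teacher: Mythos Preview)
Your argument is correct and follows a route that is genuinely more direct than the paper's. Both proofs start from the same identity
\[
\norm{u_n-u}_\varepsilon^2 = \langle J'_\varepsilon(u_n)-J'_\varepsilon(u),\,u_n-u\rangle + \int_{\R^N}\bigl(g_\varepsilon(x,u_n)-g_\varepsilon(x,u)\bigr)(u_n-u),
\]
but they diverge in how the exterior contribution is controlled. The paper first establishes two \emph{uniform tail estimates}: $\int_{\R^N\setminus A_\lambda} H u_n^2<\delta$ (from the logarithmic decay of $H$) and $\int_{\R^N\setminus A_\lambda} V u_n^2<\delta$. The second of these requires testing $\langle J'_\varepsilon(u_n),\eta_\lambda u_n\rangle=o(1)$ against a logarithmic cutoff $\eta_\lambda(x)=\zeta(\lambda^{-1}\log\abs{x})$ and invoking Hardy's inequality on the gradient cross term. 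These tail bounds then make the exterior integral small directly. Your approach sidesteps this cutoff argument entirely: you absorb $\int(\varepsilon^2 H+\mu V)(u_n-u)^2$ into the left-hand side via Hardy's inequality, which works as soon as $4\kappa/(N-2)^2+\mu<1$. The paper does not impose this extra compatibility between $\kappa$ and $\mu$, so in exchange for a mild constraint on the penalization parameters you avoid the most technical step of the original proof. Your handling of the cross term $\int(\varepsilon^2 H+\mu V)\,u\,\abs{u_n-u}$ is also fine: on a fixed annulus $V$ and $H$ are bounded and Rellich applies, while on the complement Cauchy--Schwarz against the finite quantity $\int(\varepsilon^2 H+\mu V)u^2$ gives the tail.

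One point needs care. For the interior term on $\Lambda$ you write ``strong convergence in $L^q_{\mathrm{loc}}$ for $q\in[2,2^*)$'' and then appeal to compactness of Sobolev embeddings together with $(g_2)$. But $(f_2)$ only imposes $\frac{1}{p+1}>\frac{1}{2}-\frac{1}{N-k}$, so $p+1$ may exceed the $N$-dimensional critical exponent $2^*=\frac{2N}{N-2}$ (e.g.\ $N=4$, $k=1$ allows $p+1$ up to $6>4$). The compact embedding $H^1_{V,\mathcal{H}}(\Lambda)\hookrightarrow L^{p+1}(\Lambda)$ that you need therefore does \emph{not} follow from standard Rellich in $\R^N$; it relies on the imposed symmetry, which reduces functions on $\Lambda$ to functions of $N-k$ variables on a bounded domain away from $\{x''=0\}$ (recall $\Bar\Lambda\cap\mathcal{H}=\emptyset$). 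The paper invokes exactly this improved embedding. Your step (ii), showing that the weak limit is a critical point, is correct but unnecessary here: $\langle J'_\varepsilon(u),u_n-u\rangle\to 0$ already follows from weak convergence and $J_\varepsilon\in C^1$.
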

\begin{proof}
 Let $(u_n)_n \subset H^1_{V,\mathcal{H}}(\R^N)$ be a Palais-Smale sequence for $J_{\varepsilon}$. It is standard to check,
using $(g_3)$, that for $\varepsilon$ sufficiently small, the sequence $(u_n)_n$ is bounded in $H^1_{V,\mathcal{H}}(\R^N)$.
We infer that, up to a subsequence, $u_n \rightharpoonup u$ in $H^1_{V,\mathcal{H}}(\R^N)$. 

For $\lambda \in \R^+$, set $A_{\lambda} := \overline{B(0,e^\lambda)} \setminus B(0,e^{-\lambda})$. 
Note that 
\[
 H(x)\leq \frac{\kappa}{\abs{x}^2 \abs{\log \abs{x}}^{1+\beta}}.
\]
By Hardy's inequality, we have for $\lambda \geq 0$, 
\begin{align*}
 \int_{\R^N\setminus A_{\lambda}} H u_n^2 &\leq \frac{\kappa}{\lambda^{1+\beta}}
\int_{\R^N} \frac{\abs{u_n(x)}^2}{\abs{x}^2}\: dx
 \leq \frac{\kappa}{\lambda^{1+\beta}} \left( \frac{2}{N-2} \right)^2
\int_{\R^N} \abs{\nabla u_n}^2.
\end{align*}
Since $(u_n)_n$ is bounded in $H^1_{V,\mathcal{H}}(\R^N)$, for every $\delta > 0$, there exists $\Bar{\lambda} \geq 0$
such that 
\begin{align}\label{ps:claim1}
 \limsup_{n \rightarrow \infty} \int_{\R^N\setminus A_{\Bar{\lambda}}} H u_n^2 < \delta.
\end{align}

Now we claim that for all $\delta > 0$, there exists $\Tilde{\lambda} > 0$ such that
\begin{align}\label{ps:claim2}
  \limsup_{n \rightarrow \infty} \int_{\R^N\setminus A_{\Tilde{\lambda}}} V u_n^2 < \delta.
\end{align}
We only sketch the proof, since the arguments are similar to those in \cite{BVS}*{Lemma 6}. 
Since $\Bar{\Lambda} \subset \R^N \setminus \{0\}$ is compact, there exists $\lambda_0 \geq 0$ such that 
\[
 \Bar{\Lambda} \subset A_{\lambda_0}.
\]
Let $\zeta \in C^{\infty}(\R)$ be such that $0 \leq \zeta \leq 1$ and
\begin{align*}
  \zeta(s) = \left\{ \begin{array}{ll} 0 & \textnormal{if}\ \abs{s} \leq \frac{1}{2}, \\
  1 & \textnormal{if}\ \abs{s} \geq 1. 
  \end{array} 
  \right.
\end{align*}
Define a cut-off function $\eta_{\lambda} \in C^{\infty}(\R^N,\R)$ by
 \begin{align*}
  \eta_{\lambda}(x) := \zeta\left( \frac{\log \abs{x}}{\lambda} \right).
 \end{align*}
Since $\langle J'_{\varepsilon}(u_n),\eta_{\lambda}u_n\rangle = o(1)$ as $n \to \infty$, we deduce that
 \begin{multline}\label{ps3}
	\int_{\R^N} \left( \varepsilon^2 \abs{\nabla u_n}^2 + V u_n^2 \right)\eta_{\lambda} = \int_{\R^N} g_{\varepsilon}(x, u_n(x))
u_n(x) \eta_{\lambda}(x)\: dx  \\
- \varepsilon^2 \int_{\R^N} u_n \nabla u_n \cdot \nabla \eta_{\lambda} + o(1),
 \end{multline}
as $n \to \infty$.
If $\lambda \geq 2\lambda_0$, $\eta_\lambda=0$ on $\Lambda$ and it follows from $(g_3)$ that
 \begin{align}\label{ps4}
  \int_{\R^N} g_{\varepsilon}(x,u_n(x)) u_n(x) \eta_{\lambda}(x)\: dx \leq \int_{\R^N} \left( \varepsilon^2 H + \mu V \right)
u_n^2 \eta_{\lambda}.
 \end{align}
On the other hand, using Hardy's inequality, we can show as in \cite{BVS} that
 \begin{align}\label{ps5}
  \abs{\int_{\R^N} u_n \nabla u_n \cdot \nabla \eta_{\lambda}} \leq \frac{C}{\lambda} \norm{u_n}_{\varepsilon}^2.
 \end{align}
Combining \eqref{ps3}, \eqref{ps4} and \eqref{ps5}, we get, for every $\lambda \geq 2\lambda_0$, 
\[
\begin{split}
  \int_{\R^N\setminus A_{\lambda}} \left( \varepsilon^2 \abs{\nabla u_n}^2 + (1-\mu) V u_n^2 \right)\eta_{\lambda}
&\leq
  \int_{\R^N} \left( \varepsilon^2 \abs{\nabla u_n}^2 + (1-\mu) V u_n^2 \right)\eta_{\lambda} \\
&\leq \int_{\R^N} \varepsilon^2 H
u_n^2 \eta_{\lambda} + \frac{C}{\lambda} \norm{u_n}_{\varepsilon}^2 + o(1).
\end{split}
 \]
By \eqref{ps:claim1}, for $\lambda$ large enough,
\begin{align*}
 \limsup_{n \rightarrow \infty} \int_{\R^N} H u_n^2 \eta_{\lambda} \leq \limsup_{n \rightarrow \infty}
\int_{\R^N\setminus A_{\Bar{\lambda}}} H u_n^2 < \frac{\delta}{2};
\end{align*}
the claim follows.


\medbreak \textit{Conclusion.} We can write
\begin{multline}\label{ps10}
 \norm{u_n - u}_{\varepsilon}^2 = J'_{\varepsilon}(u_n)(u_n-u) - J'_{\varepsilon}(u)(u_n-u)  \\
+ \int_{\R^N} \left(
g_{\varepsilon}(x,u_n(x))-g_{\varepsilon}(x,u(x)) \right) \left(u_n(x)-u(x)\right)\: dx.
\end{multline}
We notice that the first two terms in the right-hand side tend to $0$ as $n \rightarrow \infty$. Fix $\delta > 0$ and
let $\lambda > 0$ be such that \eqref{ps:claim1} and \eqref{ps:claim2} hold. We evaluate the integral in the third
term of \eqref{ps10} separately on $\Lambda$, $A_{\lambda}\setminus \Lambda$ and $\R^N\setminus A_{\lambda}$, where
$\lambda =\max\{\Tilde{\lambda}, \Bar{\lambda}\}$.

By $(g_2)$, one has $\abs{g_{\varepsilon}(x,u_n(x))} \leq C \abs{u_n(x)}^p$. By Rellich Theorem, the embedding
$H^1_{V,\mathcal{H}}(\Lambda) \hookrightarrow L^q(\Lambda)$ is compact for all $q>1$ such that 
$\frac{1}{q} > \frac{1}{2} - \frac{1}{N-k}$. We can thus assume that $u_n \rightarrow u$ in $L^{p+1}(\Lambda)$. We
deduce that $g_{\varepsilon}(x,u_n) \rightarrow g_{\varepsilon}(x,u)$ in $L^q(\Lambda)$ as $n\to \infty$, where $q :=
\frac{p+1}{p}$. We conclude from H\"older inequality that
\begin{align*}
 \int_{\Lambda} \left( g_{\varepsilon}(x,u_n(x))-g_{\varepsilon}(x,u(x)) \right) \left(u_n(x)-u(x)\right)\: dx \rightarrow 0,
\hspace{0.2cm} \text{as} \ \ n \rightarrow \infty.
\end{align*}

By $(g_3)$, one has $\abs{g_{\varepsilon}(x,u_n(x))} \leq \left( \varepsilon^2 H(x)+\mu V(x) \right) \abs{u_n(x)}$ for $x \in
A_{\lambda}\setminus \Lambda$. By Rellich Theorem, we can assume that $u_n \rightarrow u$ in
$L^{2}(A_{\lambda}\setminus \Lambda)$. We deduce that $g_{\varepsilon}(x,u_n) \rightarrow g_{\varepsilon}(x,u)$ in
$L^2(A_{\lambda}\setminus \Lambda)$ as $n\to \infty$. We conclude as above that
\begin{align*}
 \int_{A_{\lambda}\setminus \Lambda} \left( g_{\varepsilon}(x,u_n(x))-g_{\varepsilon}(x,u(x)) \right)
\left(u_n(x)-u(x)\right)\: dx \rightarrow 0, \hspace{0.2cm} \text{as} \ \ n \rightarrow \infty.
\end{align*}

Finally, using $(g_3)$, \eqref{ps:claim1} and \eqref{ps:claim2}, we obtain
\begin{align*}
 &\limsup_{n \rightarrow \infty} \int_{\R^N\setminus A_{\lambda}} \abs{g_{\varepsilon}(x,u_n(x))-g_{\varepsilon}(x,u(x))}
\abs{u_n(x)-u(x)}\: dx \\ &\qquad \leq 2 \limsup_{n \rightarrow \infty} \int_{\R^N\setminus A_{\lambda}} \left(
\abs{g_{\varepsilon}(x,u_n(x))u_n(x)} + \abs{g_{\varepsilon}(x,u(x))u(x)} \right)\: dx \\
&\qquad \leq 2 \limsup_{n \rightarrow \infty} \int_{\R^N\setminus A_{\lambda}} \left( \varepsilon^2 H +\mu V  \right)
\left(u_n^2
+ u^2 \right) \\
&\qquad \leq 4(1+\mu) \delta,
\end{align*}
since $\lambda \geq \Bar{\lambda}$ and $\lambda \geq \Tilde{\lambda}$. 

Since $\delta > 0$ is arbitrary, we conclude
\begin{align*}
 \lim_{n \rightarrow \infty}\norm{u_n - u}_{\varepsilon} = 0,
\end{align*}
which ends the proof.
\end{proof}

We can now state an existence theorem for the penalized problem \eqref{p2}. The proof follows from standard arguments.
\begin{theorem}\label{Th1}
 Let $g: \R \times \R^+ \rightarrow \R$ be a Carath\'eodory function satisfying $(g_1)-(g_4)$ and $V \in C(\R^N \backslash
\left\{ 0 \right\})$ be a nonnegative function.
Then, for all $\varepsilon > 0$, the functional $J_{\varepsilon}$ possesses a nontrivial critical point $u_{\varepsilon} \in
H^1_{V,\mathcal{H}}(\R^N)$, which is characterized by
\begin{align}\label{ceps}
 c_{\varepsilon} := J_{\varepsilon}(u_{\varepsilon}) = \inf_{u \in H^1_{V,\mathcal{H}}(\R^N)\setminus \lbrace0\rbrace} \max_{t >
0} J_{\varepsilon}(tu).
\end{align}
\end{theorem}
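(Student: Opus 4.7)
The plan is to apply the mountain pass theorem of Ambrosetti and Rabinowitz to $J_\varepsilon$ on the Hilbert space $H^1_{V,\mathcal{H}}(\R^N)$, whose Palais--Smale condition has already been verified in Proposition~\ref{prop:PS}. The critical point obtained this way is then a weak solution of the penalized equation \eqref{p2} on the full space by the principle of symmetric criticality, since \eqref{p2} is invariant under the isometries $R \in \mathbf{O}(N)$ preserving $\mathcal{H}$.

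I would first verify the mountain pass geometry. The fact $J_\varepsilon(0)=0$ is immediate. For the barrier at the origin, by $(g_1)$ and $(g_2)$ one has $G_\varepsilon(x,s) \leq \delta s^2 + C_\delta s^{p+1}$ uniformly in $x$, so after choosing $\delta < \tfrac{1}{2}\inf V$ (locally) and using the Sobolev embedding $H^1_V(\R^N) \hookrightarrow L^{p+1}$ available because of $(g_2)$ (with the subcritical exponent condition $\tfrac{1}{p+1} > \tfrac{1}{2}-\tfrac{1}{N-k} \geq \tfrac{1}{2}-\tfrac{1}{N}$), one gets $J_\varepsilon(u) \geq \alpha > 0$ on some sphere $\|u\|_\varepsilon = \rho$. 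For the existence of $e$ with $J_\varepsilon(e) \leq 0$, I would fix any nonnegative symmetric $\varphi \in \mathcal{D}(\R^N)$ with support in $\Lambda$ and $\varphi \not\equiv 0$. Inside $\Lambda$ the penalization is switched off and $g_\varepsilon(x,s) = K(x)f(s)$; the Ambrosetti--Rabinowitz type condition in $(g_3)$ gives $G_\varepsilon(x,s) \geq c\, s^\theta$ with $\theta > 2$ on the support of $\varphi$, hence $J_\varepsilon(t\varphi) \to -\infty$ as $t\to +\infty$.

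Combined with the Palais--Smale condition from Proposition~\ref{prop:PS}, the mountain pass theorem yields a nontrivial critical point $u_\varepsilon \in H^1_{V,\mathcal{H}}(\R^N)$ at the mountain pass level
\[
 c_\varepsilon^{MP} := \inf_{\gamma \in \Gamma} \max_{t \in [0,1]} J_\varepsilon(\gamma(t)),
\]
where $\Gamma$ consists of continuous paths from $0$ to a point where $J_\varepsilon$ is negative. It remains to identify $c_\varepsilon^{MP}$ with the minimax quantity in \eqref{ceps}. This is the standard Nehari-type identification: condition $(g_4)$ implies that for each $u \in H^1_{V,\mathcal{H}}(\R^N)\setminus\{0\}$ the map $t \mapsto J_\varepsilon(tu)$ has a unique strictly positive maximum point $t_u > 0$, and the map $u \mapsto t_u u$ is a homeomorphism from the unit sphere onto the Nehari set $\mathcal{N}_\varepsilon := \{u \neq 0 : \langle J_\varepsilon'(u),u\rangle = 0\}$. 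The path $t \mapsto t \, t_u u/\|u\|$ (suitably rescaled) lies in $\Gamma$ once one multiplies by a large constant, giving $c_\varepsilon^{MP} \leq \inf_{u\neq 0}\max_{t>0} J_\varepsilon(tu)$; conversely, any path in $\Gamma$ must cross $\mathcal{N}_\varepsilon$, yielding the reverse inequality.

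The only mild subtlety, and the part most worth verifying carefully, is that the mountain pass geometry and the Nehari argument are carried out in the \emph{symmetric} subspace $H^1_{V,\mathcal{H}}(\R^N)$. This is unproblematic because $J_\varepsilon$ is $\mathbf{O}(N)_{\mathcal{H}}$-invariant (as $V$, $K$, $\Lambda$ and $H$ are), so all estimates go through verbatim on the closed subspace, and the principle of symmetric criticality of Palais then guarantees that the critical point produced is also a critical point of $J_\varepsilon$ on $H^1_V(\R^N)$, hence a weak solution of \eqref{p2}.
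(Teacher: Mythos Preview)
Your approach is exactly what the paper means by ``standard arguments'': mountain pass geometry, the Palais--Smale condition from Proposition~\ref{prop:PS}, and the Nehari identification of the mountain pass level via $(g_4)$.

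There is one slip in the barrier estimate. The chain $\tfrac{1}{p+1} > \tfrac{1}{2}-\tfrac{1}{N-k} \geq \tfrac{1}{2}-\tfrac{1}{N}$ is false: since $k\geq 1$ one has $\tfrac{1}{N-k}>\tfrac{1}{N}$, so the last inequality goes the wrong way. In particular $(g_2)$ does \emph{not} force subcriticality in dimension $N$, and the embedding $H^1_V(\R^N)\hookrightarrow L^{p+1}(\R^N)$ that you invoke may fail. The fix is to split $\int_{\R^N} G_\varepsilon(x,u)$. Outside $\Lambda$, the second line of $(g_3)$ gives $G_\varepsilon(x,s)\leq \tfrac12(\varepsilon^2 H(x)+\mu V(x))s^2$, which is absorbed into a fraction of $\tfrac12\|u\|_\varepsilon^2$ by Hardy's inequality \eqref{positivity} and $\mu<1$. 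Inside $\Lambda$, you use $(f_1)$--$(f_2)$ together with the improved embedding $H^1_{V,\mathcal{H}}(\Lambda)\hookrightarrow L^{p+1}(\Lambda)$ for functions with the imposed symmetry (this is where the effective dimension $N-k$ enters, exactly as in the compactness step of the proof of Proposition~\ref{prop:PS}). With this correction the barrier $J_\varepsilon(u)\geq \alpha>0$ on a small sphere holds, and the rest of your argument goes through unchanged.
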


The function $u_{\varepsilon}$ found in Theorem \ref{Th1} is called a \textit{least energy solution} of \eqref{p2}. By
standard regularity theory, if $u \in H^1_{\textnormal{loc}}(\R^N)$ is a solution of \eqref{p2}, then $u \in
W^{2,q}_{\textnormal{loc}}(\R^N)$ for every $q \in (1,\infty)$. In particular, $u \in C^{1,\alpha}_{\textnormal{loc}}(\R^N)$ for
every $\alpha \in (0,1)$. Since $g_{\varepsilon}$ is not continuous, we cannot achieve a better regularity. Notice also
that, by the strong maximum principle, any nontrivial nonnegative solution $u \in C^{1,\alpha}_{\textnormal{loc}}(\R^N)$ of
\eqref{p2} is positive in $\R^N$.

\section{Asymptotics of solutions}\label{sec:Asympt}

In this section we study the asymptotic behaviour as $\varepsilon \to 0$ of the solution found in Theorem \ref{Th1}. We
follow closely the arguments in \cite{BVS}*{\S 6}. We first prove an energy estimate which is the counterpart of \cite{BVS}*{Lemma
12}. Let $\R^{N-k}_+ := \R^{N-k-1} \times \R^+$.

\begin{proposition}[Upper estimate of the critical value]\label{estim:inf}
 Suppose that the assumptions of Theorem \ref{Th1} are satisfied. For $\varepsilon$ small enough, the critical value
$c_{\varepsilon}$ defined in \eqref{ceps} satisfies
 \[\label{estimc}
 c_{\varepsilon} \leq \varepsilon^{N-k} \bigl( \omega_k \inf_{\Lambda \cap \mathcal{H}^\perp} \mathcal{M} + o(1) \bigr)\ \text{as}\ \varepsilon \rightarrow 0,
 \]
where $\omega_k$ is the volume of the unit sphere in $\R^{k+1}$.
 Moreover, the solution $u_{\varepsilon}$ of \eqref{p2} found in Theorem \ref{Th1} satisfies, for some $C>0$,
\[
 \norm{u_{\varepsilon}}_{\varepsilon}^2 \leq C\varepsilon^{N-k}.
\]
\end{proposition}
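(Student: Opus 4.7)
The idea is to construct an explicit test function concentrated on a $k$-sphere near a near-minimizer of $\mathcal{M}$ over $\Lambda \cap \mathcal{H}^\perp$, and to evaluate the mountain-pass characterization \eqref{ceps} on it.

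Fix $\delta > 0$ and pick $x_0 \in \Lambda \cap \mathcal{H}^\perp$ such that $\mathcal{M}(x_0) \leq \inf_{\Lambda \cap \mathcal{H}^\perp}\mathcal{M} + \delta$; by the symmetry requirement on $\Lambda$, $x_0' = 0$ and we set $r_0 := \abs{x_0''} > 0$. Let $U \in H^1(\R^{N-k})$ be a least-energy solution of the limit equation \eqref{plim} with $a = V(x_0)$, $b = K(x_0)$, so that $\mathcal{I}_{a,b}(U) = \mathcal{E}(V(x_0), K(x_0))$; by the preceding lemma, $U$ is $C^1$, radial, and exponentially decaying at infinity. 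Using the "normal" coordinates $(x', \rho) \in \mathcal{H} \times (0,\infty)$ with $\rho = \abs{x''}$, define the test function $u_\varepsilon \in H^1_{V,\mathcal{H}}(\R^N)$ by
\[
u_\varepsilon(x', x'') := \chi(x', x'') \, U\Bigl(\tfrac{x'}{\varepsilon}, \tfrac{\rho - r_0}{\varepsilon}\Bigr),
\]
where $\chi$ is a smooth cut-off equal to $1$ on a small neighbourhood of $x_0$, supported in $\Lambda$, and invariant under every $R \in \mathbf{O}(N)$ with $R(\mathcal{H}) = \mathcal{H}$.

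The main step is to expand $J_\varepsilon(t u_\varepsilon)$ after the change of variables $y = (x'/\varepsilon, (\rho - r_0)/\varepsilon)$. For symmetric integrands the Jacobian $dx = \omega_k \rho^k \, dx' \, d\rho$ combined with this scaling yields $dx = \omega_k (r_0 + \varepsilon y_{N-k})^k \varepsilon^{N-k} dy$. Since $u_\varepsilon$ depends on $x''$ only through $\rho$, one checks $\abs{\nabla_x u_\varepsilon}^2 = \varepsilon^{-2}\abs{\nabla_y U(y)}^2$ wherever $\chi \equiv 1$. Using the continuity of $V$ and $K$ on $\overline{\Lambda}$ together with the exponential decay of $U$ (which absorbs the cut-off error and the discrepancy $\rho^k - r_0^k$),
\[
J_\varepsilon(t u_\varepsilon) = \omega_k r_0^k \varepsilon^{N-k} \mathcal{I}_{V(x_0), K(x_0)}(tU) + o(\varepsilon^{N-k})
\]
uniformly for $t$ in bounded intervals. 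A standard argument using $(f_3)$--$(f_4)$ confines the maximizing $t$ to a bounded interval away from $0$ and $\infty$, and since $U$ is a least-energy solution, $\max_{t>0}\mathcal{I}_{V(x_0), K(x_0)}(tU) = \mathcal{E}(V(x_0), K(x_0))$. Hence
\[
c_\varepsilon \leq \max_{t > 0} J_\varepsilon(tu_\varepsilon) \leq \omega_k \mathcal{M}(x_0) \varepsilon^{N-k} + o(\varepsilon^{N-k}),
\]
and letting $\delta \to 0$ yields the stated upper bound.

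For the norm estimate, the Nehari identity $\langle J'_\varepsilon(u_\varepsilon), u_\varepsilon\rangle = 0$ reads $\norm{u_\varepsilon}_{\varepsilon}^2 = \int_{\R^N} g_\varepsilon(x, u_\varepsilon) u_\varepsilon$. Splitting the integral over $\Lambda$ and its complement and using the penalization bound from $(g_3)$ together with Hardy's inequality,
\[
\int_{\R^N \setminus \Lambda} g_\varepsilon(x, u_\varepsilon) u_\varepsilon \, dx \leq \varepsilon^2 \int H u_\varepsilon^2 + \mu \int V u_\varepsilon^2 \leq \Bigl(\tfrac{4\kappa}{(N-2)^2} + \mu\Bigr) \norm{u_\varepsilon}_{\varepsilon}^2.
\]
Choosing $\mu$ sufficiently small makes this constant equal to $1 - \eta$ for some $\eta > 0$, whence $\int_\Lambda g_\varepsilon(x,u_\varepsilon) u_\varepsilon \geq \eta \norm{u_\varepsilon}_{\varepsilon}^2$. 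Combined with $\theta G_\varepsilon \leq g_\varepsilon s$ on $\Lambda$ and $2 G_\varepsilon \leq g_\varepsilon s$ outside, this gives $J_\varepsilon(u_\varepsilon) \geq \bigl(\tfrac{1}{2} - \tfrac{1}{\theta}\bigr)\eta \norm{u_\varepsilon}_{\varepsilon}^2$, so $\norm{u_\varepsilon}_{\varepsilon}^2 \leq C c_\varepsilon \leq C' \varepsilon^{N-k}$. The main obstacle is the careful bookkeeping of the error terms in the change-of-variables expansion (the factors $\rho^k - r_0^k$, $V(x) - V(x_0)$, $K(x) - K(x_0)$, and the cut-off), which is handled as in \cite{BVS}*{Lemma 12} using the exponential decay of $U$.
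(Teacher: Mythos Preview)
Your proof is correct and follows essentially the same approach as the paper: construct a rescaled ground state of the limit problem, cut off inside $\Lambda$, change variables to pull out the factor $\omega_k r_0^k \varepsilon^{N-k}$, and use the minimax characterization \eqref{ceps}. The only cosmetic differences are that the paper takes $x_0$ to be an exact minimizer of $\mathcal{M}$ on $\Lambda\cap\mathcal{H}^\perp$ (which exists by continuity and \eqref{hyp:Lambda}) rather than a near-minimizer, and that the paper does not spell out the norm estimate $\norm{u_\varepsilon}_\varepsilon^2 \le C\varepsilon^{N-k}$, which you correctly derive from $(g_3)$, the Nehari identity, and Hardy's inequality. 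One small caveat: you write ``choosing $\mu$ sufficiently small'', but $\mu\in(0,1)$ and $\kappa<\bigl(\tfrac{N-2}{2}\bigr)^2$ are fixed at the outset of the penalization scheme; the argument goes through provided these are chosen so that $\tfrac{4\kappa}{(N-2)^2}+\mu<1$, which is consistent with the paper's setup.
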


\begin{proof}
Let $x_0=(0, x''_0) \in \Lambda \cap \mathcal{H}^\perp$ be such that $\mathcal{M}(x_0)=\inf_{\Lambda \cap \mathcal{H}^\perp} \mathcal{M}$.
Denote by $I_0$ the functional defined by \eqref{defIab} with $a = V(x_0)$ and $b = K(x_0)$ and $w$ a ground state of \eqref{plim}. Take $\eta \in \mathcal{D}\left( \R^{N-k}_+ \right)$ to be a cut-off function
such that $0 \leq \eta \leq 1$, $\eta = 1$ in a neighbourhood of $(0, 	\abs{x''_0})$ and $\norm{\nabla\eta}_{\infty} \leq
C$. Consider the test function
\begin{align*}
 u(x) := \eta(x',\abs{x''}) w\left(\frac{x'}{\varepsilon}, \frac{\abs{x''}-\abs{x''_0}}{\varepsilon}\right).
\end{align*}
Setting
\begin{align*}
 u(x',x'') =: v\left(\frac{x'}{\varepsilon}, \frac{\abs{x''}-\abs{x''_0}}{\varepsilon}\right),
\end{align*}
we compute by a change of variable
\begin{multline*}
 J_{\varepsilon}(tu) \\
= \omega_k \frac{t^2}{2} \int_{\R^{N-k-1}} \int_{-\frac{\abs{x''_0}}{\varepsilon}}^{\infty} \left( \abs{\nabla v}^2 +
V(\varepsilon y',\varepsilon \rho + \abs{x''_0}) v^2 \right) (\varepsilon \rho +\abs{x''_0})^{k} \varepsilon d\rho\:
\varepsilon^{N-k-1} dy'\\ - \omega_k \int_{\R^{N-k-1}} \int_{-\frac{\abs{x''_0}}{\varepsilon}}^{\infty} G(\varepsilon
y',\varepsilon \rho + \abs{x''_0},tv) (\varepsilon \rho + \abs{x''_0})^{k} \varepsilon\: d\rho\: \varepsilon^{N-k-1} dy'.
\end{multline*}
For $\varepsilon$ small enough, we obtain 
\begin{align}\label{estimc1}
 \varepsilon^{-(N-k)} J_{\varepsilon}(tu) \leq \omega_k \abs{x''_0}^{k} I_0(tw) + o(1).
\end{align}
We deduce from \eqref{ceps} that
\[
\begin{split}
 \varepsilon^{-(N-k)} c_{\varepsilon} &\leq \max_{t > 0} \varepsilon^{-(N-k)} J_{\varepsilon}(tu)  \\
&\leq \omega_k \abs{x''_0}^{k} \max_{t > 0} I_0(tw) + o(1)  \\
&= \omega_k \mathcal{M}(x_0)+ o(1),
\end{split}
\]
which is the desired conclusion.
\end{proof}

\begin{proposition}[No uniform convergence to $0$ in $\Lambda$]
 Suppose that the assumptions of Theorem \ref{Th1} are satisfied and let $(u_{\varepsilon})_{\varepsilon} \subset
H^1_{V,\mathcal{H}}(\R^N)$ be positive solutions of \eqref{p2} obtained in Theorem \ref{Th1}. Then there exists $\delta
> 0$ such that 
\begin{align*}
 \norm{u_{\varepsilon}}_{L^{\infty}(\Lambda)} \geq \delta.
\end{align*}
\end{proposition}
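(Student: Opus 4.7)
The strategy is contradiction via an energy identity that pits the slow growth of $f$ at the origin (controlled by $(f_1)$) against the penalization outside $\Lambda$ (controlled by $(g_3)$ and Hardy's inequality). Assume for a contradiction that along a sequence $\varepsilon_n \to 0$ one has $\norm{u_{\varepsilon_n}}_{L^\infty(\Lambda)} \to 0$. Testing $J'_{\varepsilon_n}(u_{\varepsilon_n}) = 0$ against $u_{\varepsilon_n}$ itself gives
\[
 \norm{u_{\varepsilon_n}}_{\varepsilon_n}^2 = \int_\Lambda K(x) f(u_{\varepsilon_n}) u_{\varepsilon_n}\, dx + \int_{\R^N \setminus \Lambda} g_{\varepsilon_n}(x,u_{\varepsilon_n}) u_{\varepsilon_n}\, dx,
\]
and I intend to show that each term on the right can be absorbed into the left.

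For the exterior term I use the penalization bound in $(g_3)$, $g_\varepsilon(x,s)s \leq (\varepsilon^2 H(x) + \mu V(x))s^2$ for $x \notin \Lambda$. Hardy's inequality combined with the pointwise estimate $H(x) \leq \kappa/\abs{x}^2$ yields $\varepsilon^2 \int_{\R^N} H u_{\varepsilon_n}^2 \leq \nu \norm{u_{\varepsilon_n}}_{\varepsilon_n}^2$ with $\nu := 4\kappa/(N-2)^2 < 1$, while trivially $\mu \int_{\R^N \setminus \Lambda} V u_{\varepsilon_n}^2 \leq \mu \norm{u_{\varepsilon_n}}_{\varepsilon_n}^2$. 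Hence the exterior integral is bounded by $(\nu + \mu) \norm{u_{\varepsilon_n}}_{\varepsilon_n}^2$. As in the Palais--Smale proof, I shall take $\mu$ and $\kappa$ small enough that $\mu + \nu < 1$.

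For the interior term I invoke $(f_1)$: since $\norm{u_{\varepsilon_n}}_{L^\infty(\Lambda)} \to 0$ and $f(s) = O(s^q)$ with $q > 1$ as $s \to 0^+$, the ratio $\tau_n := \sup_{x \in \Lambda} f(u_{\varepsilon_n}(x))/u_{\varepsilon_n}(x)$ tends to $0$. Since $\overline{\Lambda}$ is a compact subset of $\R^N \setminus \{0\}$ on which $V$ admits a positive lower bound $v_0$ and $K$ is bounded, I get
\[
 \int_\Lambda K f(u_{\varepsilon_n}) u_{\varepsilon_n}\, dx \leq \tau_n \norm{K}_{L^\infty(\Lambda)} \int_\Lambda u_{\varepsilon_n}^2\, dx \leq C \tau_n \norm{u_{\varepsilon_n}}_{\varepsilon_n}^2.
\]
Combining the two bounds gives $\norm{u_{\varepsilon_n}}_{\varepsilon_n}^2 \leq (\mu + \nu + C\tau_n) \norm{u_{\varepsilon_n}}_{\varepsilon_n}^2$, and for $n$ large the coefficient is strictly less than $1$, forcing $u_{\varepsilon_n} \equiv 0$. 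This contradicts the mountain pass characterization of $u_{\varepsilon_n}$ via \eqref{ceps}, which ensures $c_{\varepsilon_n} > 0$ and thus $\norm{u_{\varepsilon_n}}_{\varepsilon_n} > 0$.

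The only delicate point is securing $\mu + \nu < 1$, which is a constraint on the parameters of the penalization scheme. Once this is arranged, the argument above closes in a straightforward manner; all other ingredients (Hardy's inequality, the lower bound on $V$ over the compact set $\overline{\Lambda}$, and the $o(s)$ behaviour of $f$ at the origin) are already available from the hypotheses.
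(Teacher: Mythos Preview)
Your argument is correct, but it is organised differently from the paper's. The paper proceeds pointwise: once $\norm{u_{\varepsilon_n}}_{L^\infty(\Lambda)}$ is small, $(f_1)$ gives $K f(u_{\varepsilon_n}) \leq \mu V u_{\varepsilon_n}$ on $\Lambda$, which together with $(g_3)$ outside $\Lambda$ yields the differential inequality $-\varepsilon^2(\Delta u_{\varepsilon_n} + H u_{\varepsilon_n}) + (1-\mu)V u_{\varepsilon_n} \leq 0$ on all of $\R^N$; the comparison principle (Proposition~\ref{Th:comp}, whose proof is exactly the positivity \eqref{positivity}) then forces $u_{\varepsilon_n} \equiv 0$. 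You instead integrate this same information by testing $J'_{\varepsilon_n}(u_{\varepsilon_n})=0$ against $u_{\varepsilon_n}$ and invoke Hardy directly. The two routes are essentially the integral and the pointwise formulations of the same positivity argument; yours avoids quoting the comparison principle, while the paper's is a couple of lines shorter.

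One small sharpening removes the extra constraint you impose. Your Hardy step actually gives $\varepsilon^2 \int_{\R^N} H u_{\varepsilon_n}^2 \leq \nu\, \varepsilon^2 \int_{\R^N} \abs{\nabla u_{\varepsilon_n}}^2$, which controls only the gradient part of $\norm{\cdot}_{\varepsilon_n}^2$, while $\mu \int_{\R^N\setminus\Lambda} V u_{\varepsilon_n}^2$ controls only the potential part. Hence the exterior term is bounded by $\max(\nu,\mu)\,\norm{u_{\varepsilon_n}}_{\varepsilon_n}^2$ rather than $(\nu+\mu)\,\norm{u_{\varepsilon_n}}_{\varepsilon_n}^2$, and since $\nu<1$ and $\mu<1$ are already assumed, no additional restriction on the penalization parameters is needed.
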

\begin{proof}
 Suppose by contradiction that $\norm{u_{\varepsilon}}_{L^{\infty}(\Lambda)} \to 0$ as $\varepsilon \to 0$. Then, $(f_1)$
implies that, for all $\varepsilon$ sufficiently small, $K f(u_{\varepsilon}) \leq \mu V u_{\varepsilon}$ in $\Lambda$. By
$(g_3)$, we deduce that
\begin{align*}
 -\varepsilon^2 \left( \Delta u_{\varepsilon} + H u_{\varepsilon} \right) + (1 - \mu)V u_{\varepsilon} \leq 0 \hspace{0.5cm}
\text{in}\ \R^N.
\end{align*}
Proposition~\ref{Th:comp} then implies that $u_{\varepsilon} \equiv 0$ for all $\varepsilon$ sufficiently small, which is
impossible.
\end{proof}

 By the symmetry imposed on the solution $u_{\varepsilon}$, one can write $u_{\varepsilon}(x',x'') =
\Tilde{u}_{\varepsilon}(x',\abs{x''})$ with $\Tilde{u}_{\varepsilon} : \R^{N-k}_+ \to \R$. Since the $H^1_V$-norm of $u_{\varepsilon}$ is of the order $\varepsilon^{(N-k)/2}$, it
is natural to rescale $\Tilde{u}_{\varepsilon}(x',\abs{x''})$ as $\Tilde{u}_{\varepsilon}(x'_{\varepsilon} + \varepsilon y',
\abs{x''_{\varepsilon}} + \varepsilon \abs{y''})$ around a well-chosen family of points $x_{\varepsilon} =
(x'_{\varepsilon},x''_{\varepsilon}) \in \R^N$.

The next lemma shows that the sequences of rescaled solutions converge, up to a subsequence, in
$C^1_{\mathrm{loc}}(\R^{N-k})$ to a function $v \in H^1(\R^{N-k})$. 
\begin{lemma}
\label{convergentSubsequence}
Suppose that the assumptions of Theorem \ref{Th1} are satisfied. Let $u_{\varepsilon}\in H^1_{V,\mathcal{H}}(\R^N)$ be
positive solutions of \eqref{p2} found in Theorem \ref{Th1}, $(\varepsilon_n)_n \subset \R^+$ and $(x_n)_n \subset
\R^N$ be sequences such that $\varepsilon_n \to 0$ and $x_n = (x'_n,x''_n) \to \bar{x} =
(\bar{x}',\bar{x}'') \in \Bar{\Lambda}$ as $n\to \infty$. Set 
\[
  \Omega_n := \R^{N-k-1} \times \Bigl] -\frac{\abs{x''_n}}{\varepsilon_n}, +\infty \Bigr[
\] 
and let  $v_n : \Omega_n \to \R$ be defined by
\begin{align}\label{def:vn}
 v_n(y, z) := \Tilde{u}_{\varepsilon_n}(x'_{n} + \varepsilon_n y, \abs{x''_{n}} + \varepsilon_n z),
\end{align}
where $\Tilde{u}_{\varepsilon_n} : \R^{N-k}_+ \to \R$ is such that $u_{\varepsilon_n}(x',x'')
=\Tilde{u}_{\varepsilon_n}(x', \abs{x''})$. Then, there exists $v \in H^1(\R^{N-k})$ such that, along a subsequence that we still denote by $(v_n)_n$,
\begin{align*}
 v_n \stackrel{C^1_{\mathrm{loc}}(\R^{N-k})}{\longrightarrow} v.
\end{align*}
\end{lemma}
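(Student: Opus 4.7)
The plan is to translate the uniform energy bound from Proposition~\ref{estim:inf} into a uniform local $H^1$ bound for the sequence $(v_n)_n$, to write down the PDE satisfied by $v_n$ after rescaling and bring it under standard elliptic regularity, and finally to extract a convergent subsequence by Arzel\`a--Ascoli.

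\textbf{Energy bound.} Writing $u_{\varepsilon_n}(x',x'') = \Tilde{u}_{\varepsilon_n}(x',\abs{x''})$ and integrating out the spherical variable in $\mathcal{H}^\perp$, the norm \eqref{eqNorm} becomes
\[
\norm{u_{\varepsilon_n}}_{\varepsilon_n}^2 = \omega_k \int_{\R^{N-k-1}}\int_0^\infty \bigl(\varepsilon_n^2 \abs{\nabla \Tilde{u}_{\varepsilon_n}}^2 + V\, \Tilde{u}_{\varepsilon_n}^2\bigr)\, r^k\, dr\, dx'.
\]
The change of variables $(y,z) := ((x'-x'_n)/\varepsilon_n,(r-\abs{x''_n})/\varepsilon_n)$ combined with $\norm{u_{\varepsilon_n}}_{\varepsilon_n}^2 \leq C\varepsilon_n^{N-k}$ yields
\[
\int_{\Omega_n} \bigl(\abs{\nabla v_n}^2 + V_n v_n^2\bigr)(\abs{x''_n} + \varepsilon_n z)^k\, dy\, dz \leq C,
\]
where $V_n(y,z) := V(x'_n + \varepsilon_n y, \abs{x''_n} + \varepsilon_n z)$. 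Since $\Bar{\Lambda} \cap \mathcal{H} = \emptyset$ forces $\abs{\bar x''} > 0$, every fixed compact $K \subset \R^{N-k}$ lies inside $\Omega_n$ for large $n$, the factor $(\abs{x''_n} + \varepsilon_n z)^k$ is bounded above and away from zero on $K$, and $V_n$ converges uniformly on $K$ to $V(\bar x) > 0$. This delivers a uniform bound on $(v_n)_n$ in $H^1(K)$ for every such $K$.

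\textbf{Rescaled PDE and regularity.} In cylindrical coordinates, $\Delta u = \Delta_{x'}\Tilde{u} + \partial_r^2 \Tilde{u} + (k/r)\,\partial_r \Tilde{u}$, so $v_n$ solves
\[
-\Delta v_n - \frac{k\varepsilon_n}{\abs{x''_n} + \varepsilon_n z}\,\partial_z v_n + V_n v_n = g_n(y,z,v_n) \qquad \text{on } \Omega_n,
\]
where $g_n(y,z,s)$ is the image of $g_{\varepsilon_n}$ under the same change of variables; this is well defined thanks to the $\mathbf{O}(N)$-invariance of $V$ and $K$. On any compact $K$, for $n$ large the advection coefficient is uniformly small and $V_n$ is uniformly bounded. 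By $(g_1)$ and $(g_2)$, $\abs{g_n(y,z,s)} \leq C(\abs{s} + \abs{s}^p)$ with $p$ subcritical in dimension $N-k$. Moser iteration applied to this uniformly elliptic equation, together with the $H^1_{\mathrm{loc}}$ bound of the previous paragraph, therefore produces a uniform bound of $(v_n)_n$ in $L^\infty_{\mathrm{loc}}(\R^{N-k})$. Once $v_n$ and hence $g_n(\cdot,v_n)$ are locally bounded, standard $W^{2,q}$ elliptic $L^q$-theory and Morrey's embedding give a uniform bound of $(v_n)_n$ in $C^{1,\alpha}_{\mathrm{loc}}(\R^{N-k})$ for every $\alpha \in (0,1)$.

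\textbf{Conclusion and main obstacle.} Arzel\`a--Ascoli and a diagonal extraction deliver a subsequence, still denoted $(v_n)_n$, converging in $C^1_{\mathrm{loc}}(\R^{N-k})$ to some $v$. Passing to the lower limit in the bound of the first paragraph and using Fatou's lemma with $V(\bar x) > 0$ shows that $v \in H^1(\R^{N-k})$. The delicate point is the uniform $L^\infty_{\mathrm{loc}}$ estimate: one must verify that the Moser scheme can be carried out uniformly in $n$ despite the varying domains $\Omega_n$ and the $n$-dependent lower-order coefficients, using in an essential way that $(g_2)$ imposes the subcriticality in the \emph{reduced} dimension $N-k$, not in the ambient dimension $N$.
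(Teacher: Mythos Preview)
Your proof is correct and follows essentially the same strategy as the paper: convert the energy bound of Proposition~\ref{estim:inf} into uniform local $H^1$ control on the rescaled functions, apply elliptic regularity to the rescaled equation to obtain uniform $W^{2,q}_{\mathrm{loc}}$ (hence $C^{1,\alpha}_{\mathrm{loc}}$) bounds, and extract a subsequence. The only minor variation is that the paper introduces a truncated sequence $w_n=\eta_{R_n}v_n$ (with $R_n\to\infty$, $\varepsilon_n R_n\to 0$) to produce a sequence bounded globally in $H^1(\R^{N-k})$, so that $v\in H^1$ comes from weak compactness, whereas you obtain $v\in H^1$ a~posteriori by Fatou's lemma on the uniform energy bound over balls; both are standard, and your drift coefficient $k\varepsilon_n/(\abs{x''_n}+\varepsilon_n z)$ is in fact the correct one.
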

\begin{proof}
 First observe that each $v_n$ solves the equation
\begin{align}\label{conv:eqn}
 -\Delta v_n - \frac{\varepsilon_n k}{z} \Dp{v_n}{z} + V(y_{n} + \varepsilon_n y, \abs{x'_{n}} + \varepsilon_n z) v_n = 
g_{\varepsilon_n} (y_{n} + \varepsilon_n y, \abs{x'_{n}} + \varepsilon_n z, v_n), 
\end{align}
in $\Omega_n$. We infer from Proposition \ref{estim:inf} that for all $n \in \N$,
\begin{align*}
 \int_{\Omega_n} \left( \abs{\nabla v_n(y,z)}^2 + V(y_{n} + \varepsilon_n y, \abs{x'_{n}} + \varepsilon_n z)
\abs{v_n(y,z)}^2 \right)\: dy\, dz \leq C,
\end{align*}
with $C>0$ independent of $n$.

Define a cut-off function $\eta_R \in \mathcal{D}(\R^{N-k})$ such that $0\leq \eta_R \leq 1$, $\eta_R(x) = 1$ if
$\abs{x} \leq R/2$, $\eta_R(x)=0$ if $\abs{x}\geq R$ and $\norm{\nabla\eta_R}_{\infty} \leq C/R$ for some $C>0$. Choose
$(R_n)_n$ such that $R_n\to\infty$ and $\varepsilon_n R_n \to 0$. Since $\Bar{x} \in \Lambda$ and $\Bar{\Lambda} \cap 
\mathcal{H}=\emptyset$, one has $\varepsilon_n R_n \leq \abs{x''_{n}}$ if $n$ is large enough. 
Define $w_n \in H^1_{\mathrm{loc}}(\R^{N-k})$ by
\[
 w_n(y) := \eta_{R_n}(y) v_n(y).
\]

On the one hand, we notice that
\[
 \begin{split}
 \int_{\R^{N-k}} w_n^2 &\leq \int_{B(0,R_n)} v_n^2  \\
&\leq \frac{1}{\inf_{B(x_n,\varepsilon_n R_n)} V} \int_{\Omega_n} V(y_{n}
+ \varepsilon_n y, \abs{x'_{n}} + \varepsilon_n z) \abs{v_n(y,z)}^2\: dy dz.
 \end{split}
\]
Since $V$ is positive on $\Bar{\Lambda}$ and continuous on $\R^N$, the convergence of $x_n$ to a point in $\Bar{\Lambda}$ implies that 
\begin{align}\label{conv:estim1}
 \int_{\R^{N-k}} w_n^2 \leq C.
\end{align}
On the other hand, we compute in the same way as in \cite{BVS}*{Lemma 13}
\begin{align}\label{conv:estim2}
 \int_{\R^{N-k}} \abs{\nabla w_n}^2 \leq C \norm{v_n}^2_{H^1(B(0,R_n))}.
\end{align}
Since
\begin{align*}
 \norm{v_n}_{H^1(B(0,R_n))} \leq C \norm{u_{\varepsilon_n}}_{\varepsilon},
\end{align*}
we deduce from \eqref{conv:estim1} and \eqref{conv:estim2} that $(w_n)_n$ is bounded in $H^1(\R^{N-k})$. Since $w_n$
solves equation \eqref{conv:eqn} on $B(0,R_n)$ for all $n$, classical regularity estimates yield that for every $R>0$
and every $q>1$,
\begin{align}\label{conv:estimW2q}
 \sup_{n\in \N} \norm{v_n}_{W^{2,q}(B(0,R))} < \infty.
\end{align}

Up to a subsequence, we can now assume that $(w_n)_n$ converges weakly in $H^1(\R^{N-k})$ to some function $v \in
H^1(\R^{N-k})$. By \eqref{conv:estimW2q}, for every
compact $K \subset \R^{N-k}$, $w_n$ converges to $v$ in $C^1(K)$. Moreover, for $n$ large enough, $w_n = v_n$ in $K$ so that $v_n\to v$ in
$C^1(K)$.
\end{proof}

In the next two lemmas, we will estimate from below the action of $u_{\varepsilon}$ inside and outside neighbourhoods of
points. Since we expect the concentration set to be a $k$-sphere in $\R^N$, the following distance will be useful. For
$x,y \in \R^N$, let
\begin{align*}
 d_{\mathcal{H}}(x,y) := \sqrt{\abs{x'-y'}^2+ \left( \abs{x''}-\abs{y''}\right) ^2}.
\end{align*}
Thus $d_{\mathcal{H}}(x,y)$ represents the distance between the $k$-spheres centered at $x'$ and $y'$, and of radius
$\abs{x''}$ and $\abs{y''}$ respectively. 
We denote by $B_{\mathcal{H}}$ the balls for the distance $d_{\mathcal{H}}$, i.e.,
\[
 B_{\mathcal{H}}(x,r)=\{ y \in \R^N : d_{\mathcal{H}}(x, y) < r\}.
\]

\begin{lemma}
Suppose that the assumptions of Theorem \ref{Th1} are satisfied. Let $u_{\varepsilon}\in H^1_{V,\mathcal{H}}(\R^N)$ be
positive solutions of \eqref{p2} found in Theorem \ref{Th1}, $(\varepsilon_n)_n \subset \R^+$ and $(x_n)_n \subset
\R^n$ be sequences such that $\varepsilon_n \to 0$ and $x_n = (x'_n,x''_n) \to \bar{x} =
(\bar{x}',\bar{x}'') \in \Bar{\Lambda}$ as $n\to \infty$. If 
\begin{align}\label{estim:sup1}
 \liminf_{n\to\infty} u_{\varepsilon_n}(x_n) > 0,
\end{align}
then we have, up to a subsequence,
 \begin{align*}
 \liminf_{R\to\infty}\liminf_{n\to\infty} \varepsilon_n^{-(N-k)} \left( \int_{T_n(R)}  \frac{1}{2} \left( \varepsilon_n^2
\abs{\nabla u_{\varepsilon_n}}^2 + V u_{\varepsilon_n}^2 \right) - G_{\varepsilon_n}(x,u_{\varepsilon_n}) \right) 
\geq \omega_k \mathcal{M}(\bar{x}),
 \end{align*}
where $T_n(R) := B_{\mathcal{H}}(x_n,\varepsilon_n R)$.
\end{lemma}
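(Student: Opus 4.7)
The plan is to rescale around $x_n$ using the function $v_n$ from Lemma \ref{convergentSubsequence}, converting the integral over the ``tubular'' ball $T_n(R)=B_{\mathcal{H}}(x_n,\varepsilon_n R)$ into an integral over the Euclidean ball $B(0,R)\subset\R^{N-k}$, then pass to the limit $n\to\infty$ and $R\to\infty$, and finally identify the limit with $\omega_k\lvert\bar{x}''\rvert^k\mathcal{E}(V(\bar{x}),K(\bar{x}))=\omega_k\mathcal{M}(\bar{x})$ by using that the limit function $v$ is a nontrivial solution of the limit equation.

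\textbf{Change of variables.} Since $u_{\varepsilon_n}$ is invariant under the subgroup of $\mathbf{O}(N)$ fixing $\mathcal{H}$, writing $x''\in\mathcal{H}^\perp\cong\R^{k+1}$ in spherical coordinates produces the factor $\omega_k r^k$ with $r=\lvert x''\rvert$. The set $T_n(R)$ is characterised by $\lvert x'-x'_n\rvert^2+(\lvert x''\rvert-\lvert x''_n\rvert)^2<\varepsilon_n^2 R^2$, so the substitution $x'=x'_n+\varepsilon_n y$, $r=\lvert x''_n\rvert+\varepsilon_n z$ (Jacobian $\varepsilon_n^{N-k}$) turns this into the ball $B(0,R)\subset\R^{N-k}$; for $n$ large, $B(0,R)\subset\Omega_n$ since $\lvert\bar{x}''\rvert>0$ (by $\bar\Lambda\cap\mathcal{H}=\emptyset$). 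The quantity in the statement thus equals
\[
 \omega_k\int_{B(0,R)}\Bigl[\tfrac{1}{2}\bigl(\lvert\nabla v_n\rvert^2+V(\xi_n)v_n^2\bigr)-G_{\varepsilon_n}(\xi_n,v_n)\Bigr]\bigl(\lvert x''_n\rvert+\varepsilon_n z\bigr)^k\,dy\,dz,
\]
where $\xi_n$ is any point having $x'$-coordinate $x'_n+\varepsilon_n y$ and $\lvert x''\rvert$-coordinate $\lvert x''_n\rvert+\varepsilon_n z$ (well-defined by symmetry of $V,K$).

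\textbf{Passing to the limit.} By Lemma \ref{convergentSubsequence}, $v_n\to v$ in $C^1(B(0,R))$, so by continuity $V(\xi_n)\to V(\bar{x})$ and $(\lvert x''_n\rvert+\varepsilon_n z)^k\to\lvert\bar{x}''\rvert^k$ uniformly on $B(0,R)$, and the kinetic and mass terms converge to the obvious limits. For the nonlinear term, use the structural bound $G_{\varepsilon_n}(\xi,s)\le K(\xi)F(s)$ (coming from the definition of $g_{\varepsilon_n}$) together with uniform convergence of $K(\xi_n)F(v_n)$ to $K(\bar{x})F(v)$ to obtain
\[
 \liminf_{n\to\infty}\ge\omega_k\lvert\bar{x}''\rvert^k\int_{B(0,R)}\Bigl[\tfrac{1}{2}\bigl(\lvert\nabla v\rvert^2+V(\bar{x})v^2\bigr)-K(\bar{x})F(v)\Bigr]dy\,dz.
\]
Letting $R\to\infty$, monotone convergence for the quadratic part and integrability of $F(v)$ (from $v\in H^1(\R^{N-k})$ and $(f_1)$--$(f_2)$) yield the bound $\omega_k\lvert\bar{x}''\rvert^k\,\mathcal{I}_{V(\bar{x}),K(\bar{x})}(v)$.

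\textbf{Identification of $v$ and conclusion.} Since $v_n(0,0)=u_{\varepsilon_n}(x_n)$, the hypothesis \eqref{estim:sup1} gives $v(0,0)>0$, so $v\not\equiv 0$. Testing equation \eqref{conv:eqn} against $\phi\in C_c^\infty(\R^{N-k})$ and sending $n\to\infty$, the drift coefficient $k\varepsilon_n/(\lvert x''_n\rvert+\varepsilon_n z)\to 0$ (as $\lvert\bar{x}''\rvert>0$), and $g_{\varepsilon_n}(\xi_n,v_n)\to K(\bar{x})f(v)$ uniformly on $\mathrm{supp}(\phi)$, so $v$ solves $-\Delta v+V(\bar{x})v=K(\bar{x})f(v)$ in $\R^{N-k}$. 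Hence $v\in\mathcal{N}_{V(\bar{x}),K(\bar{x})}$ and $\mathcal{I}_{V(\bar{x}),K(\bar{x})}(v)\ge\mathcal{E}(V(\bar{x}),K(\bar{x}))$; combined with the previous paragraph this gives $\omega_k\mathcal{M}(\bar{x})$. The main obstacle is the last identification when $\bar{x}\in\partial\Lambda$: there, $g_{\varepsilon_n}$ may not converge to $K(\bar{x})f$ on neighbourhoods of the origin since the penalized regime stays active arbitrarily close to $\bar{x}$, and $v$ a priori satisfies only a differential inequality. This is handled by reprojecting onto the Nehari manifold via $t\mapsto\mathcal{I}(tv)$ and invoking the monotonicity of $s\mapsto f(s)/s$ from $(f_4)$, as in \cite{BVS}*{Lemma 14}.
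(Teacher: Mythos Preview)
Your overall architecture---rescale via Lemma~\ref{convergentSubsequence}, change variables to reduce the integral over $T_n(R)$ to one over $B(0,R)\subset\R^{N-k}$, pass to the limit, and compare with the ground energy---matches the paper's. The change of variables and the identification $v(0)>0$ are correct.

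The gap is in how you pass to the limit when $\bar{x}\in\partial\Lambda$. Your shortcut $G_{\varepsilon_n}(\xi,s)\le K(\xi)F(s)$ yields the lower bound $\omega_k\lvert\bar{x}''\rvert^k\,\mathcal{I}_{V(\bar{x}),K(\bar{x})}(v)$, and you then need $\mathcal{I}_{V(\bar{x}),K(\bar{x})}(v)\ge\mathcal{E}(V(\bar{x}),K(\bar{x}))$. But in the boundary case $v$ solves only the \emph{penalized} limit equation $-\Delta v+V(\bar{x})v=\tilde g(y,v)$ with $\tilde g(y,s)\le K(\bar{x})f(s)$; testing against $v$ gives $\langle\mathcal{I}'_{V(\bar{x}),K(\bar{x})}(v),v\rangle\le 0$, so the maximum of $t\mapsto\mathcal{I}_{V(\bar{x}),K(\bar{x})}(tv)$ is attained at some $t^*\le 1$ and $\mathcal{I}_{V(\bar{x}),K(\bar{x})}(v)\le\mathcal{I}_{V(\bar{x}),K(\bar{x})}(t^*v)$. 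Thus $\mathcal{I}_{V(\bar{x}),K(\bar{x})}(v)$ can lie \emph{strictly below} $\mathcal{E}$, and your proposed ``reprojection onto $\mathcal{N}$'' goes the wrong way. The shortcut discards precisely the information needed.

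The paper (following \cite{BVS}*{Lemma 14}) instead tracks the genuine pointwise limit of $G_{\varepsilon_n}$: up to a subsequence the rescaled characteristic functions $\chi_\Lambda$ converge a.e.\ to some $\chi$, the limit nonlinearity is $\tilde g(y,s)=\chi(y)K(\bar{x})f(s)+(1-\chi(y))\min\{\mu V(\bar{x})s,K(\bar{x})f(s)\}$, and one obtains $\liminf\ge\omega_k\lvert\bar{x}''\rvert^k\,\tilde{\mathcal{I}}(v)$ for the corresponding functional $\tilde{\mathcal{I}}$. Now $v$ \emph{is} on the Nehari manifold for $\tilde{\mathcal{I}}$ (it solves the associated equation, and $s\mapsto\tilde g(y,s)/s$ is nondecreasing by $(g_4)$), so $\tilde{\mathcal{I}}(v)=\max_{t>0}\tilde{\mathcal{I}}(tv)$. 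Since $\tilde G\le K(\bar{x})F$, one has $\tilde{\mathcal{I}}(tv)\ge\mathcal{I}_{V(\bar{x}),K(\bar{x})}(tv)$ for every $t$, whence
\[
\tilde{\mathcal{I}}(v)=\max_{t>0}\tilde{\mathcal{I}}(tv)\ge\max_{t>0}\mathcal{I}_{V(\bar{x}),K(\bar{x})}(tv)\ge\mathcal{E}(V(\bar{x}),K(\bar{x})).
\]
The ``reprojection'' you allude to is exactly this chain, but it must be run on $\tilde{\mathcal{I}}$, not on $\mathcal{I}_{V(\bar{x}),K(\bar{x})}$.
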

\begin{proof}
Let $v_n$ be defined by \eqref{def:vn}. Passing to a subsequence if necessary, we may assume that there exists $v \in
H^1(\R^{N-k})$ such that $v_n \to v$ in $C^1_{\mathrm{loc}}(\R^{N-k})$. Since $\Lambda$ is smooth, we can also assume that
the sequence of characteristic functions $\chi_n(y, z) = \chi_{\Lambda}(x'_n+\varepsilon_n y,\abs{x_n''}+\varepsilon_n z)$
converges almost everywhere to a measurable function $\chi$ satisfying $0\leq \chi \leq 1$. We then deduce that $v$
solves the limiting equation
\[
 -\Delta v + V(\bar{x})v = \Tilde{g}(y, v) \hspace{1cm} \text{in} \ \R^{N-k},
\]
where
\begin{align*}
 \Tilde{g}(y, s) := \chi(y) K(\bar{x}) f(s) + \left( 1-\chi(y) \right) \min\left\lbrace \mu V(\bar{x}) s, K(\bar{x})f(s)
\right\rbrace.
\end{align*}
 By \eqref{estim:sup1}, we know that $v(0) = \lim_{n\to\infty} v_n(0) > 0$, so that $v$ is not identically zero.

It was shown in \cite{BVS}*{Lemma 14} that
\begin{multline*}
 \liminf_{R\to\infty}\liminf_{n\to\infty} \int_{B(0,R)} \biggl( \frac{1}{2} \left( \abs{\nabla v_n(y, z)}^2 + V(x'_n +
\varepsilon_n y, \abs{x''_n} + \varepsilon_n z)\abs{v_n(y, z)}^2 \right) \\ \shoveright{- G_{\varepsilon_n}\left(x'_n + \varepsilon_n
y, \abs{x''_n} + \varepsilon_n z,v_n(y, z)\right) \biggr)\: dz\: dy} \\ \qquad \geq \frac{1}{2} \int_{\R^{N-k}} \left( 
\abs{\nabla v}^2 + V(\bar{x}) v^2 \right) - \int_{\R^{N-k}} \Tilde{G}(y,v(y))\: dy,
 \end{multline*}
where $\Tilde{G}(x,s) := \int_0^s \Tilde{g}(x,\sigma)\: d\sigma$.

Set $B_n(R) := B((x',\abs{x''}),\varepsilon_n R) \subset \R^{N-k}$. By a computation similar to the one
leading to \eqref{estimc1}, we have
\begin{multline*}
\int_{T_n(R)} \left( \frac{1}{2} \left( \varepsilon_n^2 \abs{\nabla u_{\varepsilon_n}(x)}^2 +
V(x)\abs{u_{\varepsilon_n}(x)}^2 \right) - G_{\varepsilon_n}(x,u_{\varepsilon_n}(x)) \right)\: dx \\ 
 = \omega_k \int_{B_n(R)} \biggl( \frac{1}{2} \left( \varepsilon_n^2 \abs{\nabla \Tilde{u}_{\varepsilon_n}(x',r)}^2 +
V(x',r)\abs{\Tilde{u}_{\varepsilon_n}(x',r)}^2 \right)\\
\shoveright{- G_{\varepsilon_n}(x', r,\Tilde{u}_{\varepsilon_n}(x',r)) \biggr) r^k\:
dr\: dx'} \\
 = \omega_k \abs{\bar{x}''}^k \varepsilon_n^{N-k} \int_{B(0,R)} \biggl( \frac{1}{2} \left( \abs{\nabla v_n(y, z)}^2 +
V(x'_n + \varepsilon_n y, \abs{x''_n} + \varepsilon_n z)\abs{v_n(y, z)}^2 \right) \\ 
- G_{\varepsilon_n}(x'_{n} +
\varepsilon_n y, \abs{x''_{n}} + \varepsilon_n z,v_n(y, z)) \biggr)\: dz\: dy + o(1).
 \end{multline*}
The conclusion follows.
\end{proof}

\begin{lemma}
 Suppose that the assumptions of Theorem \ref{Th1} are satisfied. Let $u_{\varepsilon}\in H^1_{V,\mathcal{H}}(\R^N)$ be
positive solutions of \eqref{p2} found in Theorem \ref{Th1}, $(\varepsilon_n)_n \subset \R^+$ and $(x^i_n)_n \subset
\R^N$ be sequences such that $\varepsilon_n \to 0$ and for $1\leq i \leq M$, $x^i_n\to
\bar{x}^i \in \Bar{\Lambda}$ as $n\to \infty$. Then, up to a subsequence, we have
\begin{align*}
  \liminf_{R\to\infty}\liminf_{n\to\infty} \varepsilon_n^{-(N-k)} \left( \int_{\R^N\setminus \mathcal{T}_n(R)} 
\frac{1}{2} \left( \varepsilon_n^2 \abs{\nabla u_{\varepsilon_n}}^2 + V u_{\varepsilon_n}^2 \right) -
G_{\varepsilon_n}(x,u_{\varepsilon_n}) \right) \geq 0,
\end{align*}
where $\mathcal{T}_n(R) := \bigcup_{i=1}^K B_{\mathcal{H}}(x^i_n,\varepsilon_n R)$.
\end{lemma}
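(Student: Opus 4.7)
The plan is to replace the energy density $\frac{1}{2}(\varepsilon_n^2\abs{\nabla u}^2 + Vu^2) - G_{\varepsilon_n}$ by the pointwise nonnegative action density $\frac{1}{2} g_{\varepsilon_n}(x,u)u - G_{\varepsilon_n}$, up to an error localized on a thin annulus around $\partial \mathcal{T}_n(R)$ which vanishes after a double passage to the limit $n \to \infty$ then $R \to \infty$. The key pointwise inequality $\frac{1}{2}g_{\varepsilon_n}(x,s)s \geq G_{\varepsilon_n}(x,s)$ holds for all $x \in \R^N$ and $s > 0$ by $(g_3)$: on $\Lambda$ because $\theta > 2$, and on $\R^N \setminus \Lambda$ because $2G_{\varepsilon_n} \leq g_{\varepsilon_n} u$.

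First I apply Lemma~\ref{convergentSubsequence} to each of the $M$ sequences $(x^i_n)_n$ to extract a common subsequence along which the rescaled profiles $v^i_n$, defined as in \eqref{def:vn}, converge in $C^1_{\mathrm{loc}}(\R^{N-k})$ to some $v^i \in H^1(\R^{N-k})$. Then I construct a smooth cut-off $\eta_n$, invariant under the symmetries of $\mathcal{H}$, with $\eta_n \equiv 0$ on $\mathcal{T}_n(R)$, $\eta_n \equiv 1$ outside $\mathcal{T}_n(R+1)$, $\abs{\nabla \eta_n} \leq C/\varepsilon_n$ and $\abs{\Delta \eta_n} \leq C/\varepsilon_n^2$. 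Testing the penalized equation against $\eta_n u_{\varepsilon_n}$ and using $\int u \nabla u \cdot \nabla \eta_n = -\frac{1}{2}\int u^2 \Delta \eta_n$ gives
\[
 \int_{\R^N}\eta_n \Bigl[\tfrac{1}{2}(\varepsilon_n^2 \abs{\nabla u_{\varepsilon_n}}^2 + V u_{\varepsilon_n}^2) - G_{\varepsilon_n}\Bigr] = \int_{\R^N} \eta_n \Bigl[\tfrac{1}{2} g_{\varepsilon_n}(x, u_{\varepsilon_n}) u_{\varepsilon_n} - G_{\varepsilon_n}\Bigr] + \tfrac{\varepsilon_n^2}{4} \int_{\R^N} u_{\varepsilon_n}^2 \Delta \eta_n,
\]
in which the first right-hand term is nonnegative.

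Both the remainder $\tfrac{\varepsilon_n^2}{4}\int u_{\varepsilon_n}^2 \Delta \eta_n$ and the annular gap between $\int \eta_n[\cdots]$ and $\int_{\R^N \setminus \mathcal{T}_n(R)}[\cdots]$ are integrals over $\mathcal{T}_n(R+1) \setminus \mathcal{T}_n(R)$. I estimate them by changing variables $x = (x'_n + \varepsilon_n y, \abs{x''_n} + \varepsilon_n z)$ around each $x^i_n$, which produces the factor $\omega_k \varepsilon_n^{N-k}(\abs{x''_n}+\varepsilon_n z)^k$; the continuity of $V$ at each $\bar x^i \in \bar{\Lambda} \subset \R^N \setminus \{0\}$, the $C^1_{\mathrm{loc}}$ convergence $v^i_n \to v^i$, and the Sobolev embedding $H^1(\R^{N-k}) \hookrightarrow L^{p+1}(\R^{N-k})$ (valid by $(f_2)$) imply that the rescaled integrals on $B(0,R+1) \setminus B(0,R)$ converge to limits involving $v^i$, $\nabla v^i$, and $(v^i)^{p+1}$. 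Since $v^i \in H^1 \cap L^{p+1}(\R^{N-k})$, each of these limits tends to $0$ as $R \to \infty$. Assembling the pieces yields $\varepsilon_n^{-(N-k)} \int_{\R^N \setminus \mathcal{T}_n(R)}[\cdots] \geq -\psi_n(R)$ with $\liminf_{R\to\infty} \liminf_{n\to\infty} \psi_n(R) = 0$, which is the claim.

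The main technical difficulty is not the algebraic identity but the passage from the qualitative $C^1_{\mathrm{loc}}$ convergence to a genuine control of the tail integrals of $u_{\varepsilon_n}$ on annuli that simultaneously shrink in $x$ and swell in the rescaled variable; this step relies crucially on the uniform energy bound $\norm{u_{\varepsilon_n}}_{\varepsilon_n}^2 \leq C \varepsilon_n^{N-k}$ of Proposition~\ref{estim:inf} together with the Sobolev tail decay of $v^i$.
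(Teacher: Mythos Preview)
Your argument is correct and follows the standard route for this type of estimate: test the equation against $\eta_n u_{\varepsilon_n}$, exploit the pointwise inequality $\tfrac{1}{2}g_{\varepsilon_n}(x,s)s \geq G_{\varepsilon_n}(x,s)$ from $(g_3)$, and control the annular error terms via the $C^1_{\mathrm{loc}}$ convergence of the rescaled solutions together with the integrability of the limit profiles $v^i$ in $H^1\cap L^{p+1}(\R^{N-k})$. The paper itself does not give a proof but simply refers to \cite{BVS}*{Lemma~15}, whose argument is essentially the one you have written out; your adaptation to the cylindrical geometry (the $d_{\mathcal{H}}$--balls, the factor $\omega_k(\abs{x''_n}+\varepsilon_n z)^k$ in the change of variables) is handled correctly.
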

\begin{proof}
 See \cite{BVS}*{Lemma 15}.
\end{proof}

\begin{proposition}[Lower estimate of the critical value]\label{estim:sup}
 Suppose that the assumptions of Theorem \ref{Th1} are satisfied. Let $u_{\varepsilon}\in H^1_{V,\mathcal{H}}(\R^N)$ be
positive solutions of \eqref{p2} found in Theorem \ref{Th1}, $(\varepsilon_n)_n \subset \R^+$ and $(x^i_n)_n \subset \R^N$
be sequences such that $\varepsilon_n \to 0$ and for $1\leq i \leq M$, $x^i_n\to
\bar{x}^i \in \Bar{\Lambda}$ as $n\to \infty$. If for every $1\leq i < j \leq M$, we have
\begin{align*}
 \limsup_{n\to\infty} \frac{d_{\mathcal{H}}(x^i_n,x^j_n)}{\varepsilon_n} = \infty
\end{align*}
and if for every $1\leq i \leq M$,
\[
 \liminf_{n\to\infty} u_{\varepsilon_n}(x^i_n) > 0,
\]
then the critical value $c_{\varepsilon}$ defined in \eqref{ceps} satisfies
 \begin{align*}
 \liminf_{n\to\infty} \varepsilon_n^{-(N-k)} c_{\varepsilon_n} \geq \omega_k \sum_{i=1}^M \mathcal{M}(\bar{x}^i).
 \end{align*}
\end{proposition}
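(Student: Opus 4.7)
The plan is to decompose the action $c_{\varepsilon_n} = J_{\varepsilon_n}(u_{\varepsilon_n})$ into integrals over the mutually disjoint $\mathcal{H}$-tubular neighborhoods $B_{\mathcal{H}}(x_n^i, \varepsilon_n R)$ plus a remainder integral over their complement, and then combine the two preceding lemmas to bound each piece from below.

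First I would pass to a subsequence along which $d_{\mathcal{H}}(x_n^i, x_n^j)/\varepsilon_n \to \infty$ for each of the finitely many pairs $i \neq j$; since the hypothesis only asserts $\limsup = \infty$, this requires a diagonal extraction. Further (finite) subsequence extractions let me invoke Lemma~\ref{convergentSubsequence} simultaneously around each $\bar{x}^i$. Fix $R > 0$. The divergence of the rescaled pairwise distances guarantees that for $n$ large enough the balls $B_{\mathcal{H}}(x_n^i, \varepsilon_n R)$, $1 \leq i \leq M$, are mutually disjoint, so the additivity of the integral yields
\[
c_{\varepsilon_n} = \sum_{i=1}^M \int_{B_{\mathcal{H}}(x_n^i, \varepsilon_n R)} \mathcal{L}_n\, dx + \int_{\R^N \setminus \mathcal{T}_n(R)} \mathcal{L}_n\, dx,
\]
with $\mathcal{L}_n := \frac{1}{2}(\varepsilon_n^2 |\nabla u_{\varepsilon_n}|^2 + V u_{\varepsilon_n}^2) - G_{\varepsilon_n}(x, u_{\varepsilon_n})$.

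Next I would multiply by $\varepsilon_n^{-(N-k)}$, take $\liminf_{n \to \infty}$ using the superadditivity of $\liminf$ on the $M+1$ pieces, and then $\liminf_{R \to \infty}$ (noting that the left-hand side is independent of $R$). At this point I invoke the two previous lemmas: the penultimate one, applied at each $x_n^i$ — using crucially the assumption $\liminf_n u_{\varepsilon_n}(x_n^i) > 0$ — gives a lower bound of $\omega_k \mathcal{M}(\bar{x}^i)$ for the $i$-th term in the sum, and the last preceding lemma, applied to the full family $(x_n^i)_{1 \le i \le M}$, gives a lower bound of $0$ for the remainder. Summing the $M$ contributions $\omega_k \mathcal{M}(\bar{x}^i)$ yields the claimed inequality.

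The main delicacy is bookkeeping the subsequences: one extraction to achieve divergence of all pairwise distances $d_{\mathcal{H}}(x_n^i, x_n^j)/\varepsilon_n$, and $M$ further ones to invoke the previous lemma at each $\bar{x}^i$. Since all extractions are finite in number, a single diagonal subsequence suffices, on which all the lower bounds above hold simultaneously.
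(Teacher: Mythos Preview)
Your decomposition into the $M$ tubes $B_{\mathcal{H}}(x_n^i,\varepsilon_n R)$ and their complement, followed by the two preceding lemmas, is exactly the paper's approach; the paper's own proof is a one-line reference to those lemmas and to \cite{BVS}*{Proposition~16}.

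One point of your bookkeeping does not quite work as written. The claim that a diagonal extraction yields a single subsequence along which $d_{\mathcal{H}}(x_n^i,x_n^j)/\varepsilon_n \to \infty$ for \emph{every} pair $i<j$ is false in general: extracting so that one pair diverges can destroy the $\limsup=\infty$ condition for another pair (already for $M=3$ one can arrange, consistently with the triangle inequality for $d_{\mathcal{H}}$, that along even indices the pair $(2,3)$ stays bounded while along odd indices the pair $(1,3)$ does). Moreover, even granting such a common subsequence, the bound you derive is on the $\liminf$ \emph{along that subsequence}, which can strictly exceed $\liminf_n \varepsilon_n^{-(N-k)} c_{\varepsilon_n}$. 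The clean fix is to work under the stronger hypothesis $\lim_{n\to\infty} d_{\mathcal{H}}(x_n^i,x_n^j)/\varepsilon_n = \infty$, which is what in fact holds in every application of this proposition in the paper. That hypothesis is stable under passing to subsequences, so the argument becomes: start from an \emph{arbitrary} subsequence, perform your finitely many further extractions so that all $M+1$ lemma conclusions hold simultaneously, obtain the lower bound there, and conclude via the characterization of $\liminf$ as the least subsequential limit.
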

\begin{proof}
This is a consequence of the two previous lemmas, see \cite{BVS}*{Proposition 16} for the details.
 \end{proof}

The following proposition is the key result for the next section.
\begin{proposition}[Uniform convergence to $0$ outside small balls]\label{Prop1}
 Suppose that the assumptions of Theorem \ref{Th1} are satisfied and that $\Lambda$ satisfies the assumptions of
Section \ref{sect:Lambda}.
Let $(u_{\varepsilon})_{\varepsilon} \subset H^1_{V,\mathcal{H}}(\R^N)$ be positive solutions of \eqref{p2} obtained in
Theorem \ref{Th1}. If $(x_{\varepsilon})_{\varepsilon>0} \subset \Lambda$ is such that
\begin{align*}
 \liminf_{\varepsilon\to 0} u_{\varepsilon}(x_{\varepsilon}) > 0,
\end{align*}
then
\begin{enumerate}[(i)]
 \item $\lim_{\varepsilon\to 0} \mathcal{M}(x_{\varepsilon}) = \inf_{\Lambda \cap \mathcal{H}^\perp} \mathcal{M}$,
 \item $\lim_{\varepsilon \to 0} \frac{\dist(x_\varepsilon, \mathcal{H}^\perp)}{\varepsilon}=0$,
 \item $\liminf_{\varepsilon \to 0} d_{\mathcal{H}}(x_{\varepsilon}, \partial \Lambda) > 0$,
 \item for every $\delta>0$, there exists $\varepsilon_0>0$ and $R>0$ such that, for every $\varepsilon \in (0,\varepsilon_0)$,
\[
 \norm{u_{\varepsilon}}_{L^{\infty}\left(\Lambda\setminus B_{\mathcal{H}}(x_{\varepsilon},\varepsilon R)\right)} \leq \delta.
\]
\end{enumerate}
\end{proposition}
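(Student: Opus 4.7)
The proof proceeds by contradiction, combining the upper energy bound of Proposition~\ref{estim:inf} with the multi-point lower bound of Proposition~\ref{estim:sup}, and systematically exploiting the invariance of $u_\varepsilon$ under every $R\in\mathbf{O}(N)$ with $R(\mathcal{H})=\mathcal{H}$. Any orbit point of $x_\varepsilon$ under this group again lies in $\Lambda$ by \eqref{symLambda} and carries the same value of $u_\varepsilon$, so it can act as a secondary \emph{peak} in the lower estimate.

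The central step is (ii). Suppose, by contradiction, that along some sequence $\varepsilon_n\to 0$ one has $|x'_{\varepsilon_n}|/\varepsilon_n\to\rho\in(0,+\infty]$, with $x_{\varepsilon_n}\to\bar x\in\bar\Lambda$. If $\rho=+\infty$, the orbit of $x_{\varepsilon_n}$ supplies points with pairwise $d_\mathcal{H}$-distances of order $|x'_{\varepsilon_n}|\gg\varepsilon_n$: when $k\leq N-3$ we may pick $M$ such points for any $M$, and the two energy estimates together force $M\mathcal{M}(\bar x)\leq\inf_{\Lambda\cap\mathcal{H}^\perp}\mathcal{M}$, which contradicts $\mathcal{M}(\bar x)\geq\inf_{\bar\Lambda}\mathcal{M}>0$ for $M$ large enough; when $k=N-2$, only the pair $\{x_{\varepsilon_n},-x_{\varepsilon_n}\}$ is available and the resulting $2\inf_\Lambda\mathcal{M}\leq\inf_{\Lambda\cap\mathcal{H}^\perp}\mathcal{M}$ directly contradicts \eqref{def:Lambda}. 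The delicate case is $\rho\in(0,\infty)$, where orbit distances are comparable to $\varepsilon_n$ and Proposition~\ref{estim:sup} does not apply as stated; one argues instead with the rescaled limit. By Lemma~\ref{convergentSubsequence} a subsequence of $v_n$ converges in $C^1_{\mathrm{loc}}$ to a nontrivial $v\in H^1(\R^{N-k})$ solving the translation-invariant limit equation $-\Delta v+V(\bar x)v=K(\bar x)f(v)$; matching the upper and lower energy bounds pins $I_0(v)=\mathcal{E}(V(\bar x),K(\bar x))$, so $v$ is a ground state. The invariance of $u_{\varepsilon_n}$ under $\mathbf{O}(\mathcal{H})$ transfers, through the rescaling, to an invariance of $v$ under the affine isometries $y\mapsto Ry+(R-I)\rho e$ with $R\in\mathbf{O}(\mathcal{H})$, where $e=\lim_n x'_{\varepsilon_n}/|x'_{\varepsilon_n}|$. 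Since a ground state has a unique radial center, this forces $\rho e=0$, the desired contradiction. This subcase is the main technical obstacle of the whole proof.

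Once (ii) is in hand, the remaining items follow by single- or two-point comparisons. For (iii), if $d_\mathcal{H}(x_{\varepsilon_n},\partial\Lambda)\to 0$ along some subsequence, (ii) puts the limit $\bar x$ in $\partial\Lambda\cap\mathcal{H}^\perp$, so $\mathcal{M}(\bar x)\geq\inf_{\partial\Lambda\cap\mathcal{H}^\perp}\mathcal{M}$, while Propositions~\ref{estim:inf} and~\ref{estim:sup} applied at the single point $x_{\varepsilon_n}$ give $\mathcal{M}(\bar x)\leq\inf_{\Lambda\cap\mathcal{H}^\perp}\mathcal{M}$, contradicting \eqref{hyp:Lambda}. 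Item (i) is then immediate since $\bar x\in\Lambda\cap\mathcal{H}^\perp$ and the same sandwich pins $\mathcal{M}(\bar x)$ at $\inf_{\Lambda\cap\mathcal{H}^\perp}\mathcal{M}$. Finally, for (iv), assume it fails and extract $\delta>0$, $\varepsilon_n\to 0$, $R_n\to\infty$ and $y_n\in\Lambda\setminus B_\mathcal{H}(x_{\varepsilon_n},\varepsilon_n R_n)$ with $u_{\varepsilon_n}(y_n)\geq\delta$. Applying (i)--(iii) to the sequence $(y_n)$ puts its limit $\bar y$ in $\Lambda\cap\mathcal{H}^\perp$ with $\mathcal{M}(\bar y)=\inf_{\Lambda\cap\mathcal{H}^\perp}\mathcal{M}$. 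Since $d_\mathcal{H}(x_{\varepsilon_n},y_n)/\varepsilon_n\geq R_n\to\infty$, Proposition~\ref{estim:sup} applied to the two points $x_{\varepsilon_n},y_n$ yields $\liminf_n\varepsilon_n^{-(N-k)}c_{\varepsilon_n}\geq 2\omega_k\inf_{\Lambda\cap\mathcal{H}^\perp}\mathcal{M}$, contradicting Proposition~\ref{estim:inf} since $\inf_{\Lambda\cap\mathcal{H}^\perp}\mathcal{M}>0$.
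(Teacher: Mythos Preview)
Your overall strategy --- contradiction via the upper bound of Proposition~\ref{estim:inf}, the multi-point lower bound of Proposition~\ref{estim:sup}, and orbits under $R\in\mathbf{O}(N)$ with $R(\mathcal{H})=\mathcal{H}$ to manufacture extra peaks --- is exactly the paper's, and your treatments of (i), (iii), (iv) and of the $\rho=+\infty$ branch of (ii) coincide with the paper's arguments (the paper states (i) first, but your ordering is the more transparent one).

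The divergence is in (ii). The paper's proof actually negates only the weaker conclusion $\dist(x_\varepsilon,\mathcal{H}^\perp)\to 0$: it assumes $x_n\to\bar x\in\bar\Lambda\setminus\mathcal{H}^\perp$, which is precisely your $\rho=+\infty$ case, and stops there. Only this weaker statement is ever invoked in (iii), (iv) and in the remainder of the paper, so the ``$/\varepsilon$'' in the statement of (ii) appears to be a slip. You are right that the claim as written would also require the case $|x'_{\varepsilon_n}|/\varepsilon_n\to\rho\in(0,\infty)$.

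Your argument for that case, however, does not close. The inherited symmetry $v(y,z)=v\bigl(R'(y+\rho e)-\rho e,\,z\bigr)$ for all $R'\in\mathbf{O}(\mathcal{H})$ only forces $v(\cdot,z)$ to be radial about $-\rho e$; since $v$ is a ground state, this merely places its unique radial centre at $(-\rho e,z_0)$ for some $z_0$, and nothing pins $\rho e$ to $0$. No contradiction follows. In fact, for a general family $(x_\varepsilon)$ satisfying only $\liminf u_\varepsilon(x_\varepsilon)>0$ the full claim is not to be expected: once $u_\varepsilon$ concentrates on a sphere in $\mathcal{H}^\perp$ with a profile of width $\varepsilon$, one may choose $x_\varepsilon$ at distance exactly $\rho\varepsilon$ from $\mathcal{H}^\perp$ and still have $\liminf u_\varepsilon(x_\varepsilon)>0$. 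So the extra step you attempt targets a statement that the paper neither proves nor needs, and which likely fails at this level of generality.
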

\begin{proof}
The first assertion is a direct consequence of Propositions \ref{estim:inf} and \ref{estim:sup}, see \cite{BVS}*{Proposition
33} for the details.

\medbreak

For the second assertion, since $\Bar{\Lambda}$ is compact, we can assume by contradiction that there exist sequences
$(\varepsilon_n)_n \subset \R^+$ and
$(x_n)_n \subset \R^{N}$ such that $\varepsilon_n \to 0$,
\[
 \liminf_{n\to\infty} u_{\varepsilon_n}(x_n) > 0,
\]
and 
\[
  x_n \to \Bar{x} \in \Bar{\Lambda}\setminus \mathcal{H}^\perp.
\]

If $k=N-2$, let $R \in \mathbf{O}(N)$ denote the reflexion with respect to $\mathcal{H}^\perp$. By definition of $H^1_{V, \mathcal{H}}(\R^N)$, $u \circ R=u$, and thus
\[
 \liminf_{n\to\infty} u_{\varepsilon_n}(R(x_n)) > 0.
\]

Since $d_{\mathcal{H}}(\Bar{x}, R(\Bar{x}))> 0$, one has $\lim_{n \to \infty} \frac{d_{\mathcal{H}}(x_n,
R(x_n))}{\varepsilon_n} = \infty$.
By Proposition \ref{estim:sup}, we obtain 
\[
 \liminf_{n\to\infty} \varepsilon_n^{-(N-k)} c_{\varepsilon_n} \geq \omega_k \left( \mathcal{M}(\bar{x}) +
\mathcal{M}(R(\bar{x})) \right) \geq 2\omega_k \inf_{\Lambda} \mathcal{M}.
\]
which, together with Proposition \ref{estim:inf}
\[
 \liminf_{n\to\infty} \varepsilon_n^{-(N-k)} c_{\varepsilon_n} \leq \omega_k \inf_{\Lambda \cap \mathcal{H}^\perp} \mathcal{M}
\]
is in contradiction with \eqref{def:Lambda}.

In the case where $k < N-2$, since $\inf_{\Lambda} \mathcal{M} > 0$, choose $\ell \in \N$ such that 
\begin{equation}
\label{Prop1:m0}
  \inf_{\Lambda \cap \mathcal{H}^\perp} \mathcal{M} < \ell\inf_{\Lambda} \mathcal{M}.
\end{equation}
There exist isometries $R_1, \dots, R_l$ of $\R^N$ such that $R_{i}(\mathcal{H})=\mathcal{H}$ and $R_i(\bar{x})
\neq R_j(\bar{x})$, for every $i,j \in \{1, \dots, \ell\}$ with $i \ne j$. One has hence 
\[
  \lim_{n \to \infty} \frac{d_{\mathcal{H}}(R_i(x_n), R_j(x_n))}{\varepsilon}=0
\]
By Proposition \ref{estim:sup}, we get
\begin{align*}
 \liminf_{n\to\infty} \varepsilon_n^{-(N-k)} c_{\varepsilon_n} \geq \omega_k \sum_{i=1}^{\ell} \mathcal{M}(R_i(\bar{x}))  \geq
l\omega_k \inf_{\Lambda} \mathcal{M},
\end{align*}
so that, in view of the upper estimate of Proposition~\ref{estim:inf}, we have a contradiction with \eqref{Prop1:m0}.

\medbreak

For the third assertion, suppose by contradiction that there exist sequences $(\varepsilon_n)_n \subset \R^+$ and
$(x_n)_n \subset \R^{N}$ such that $\varepsilon_n \to 0$,
\begin{align*}
 \liminf_{n\to\infty} u_{\varepsilon_n}(x_n) > 0,
\end{align*}
and, $x_n \to \Bar{x} \in \partial \Lambda$. We have just proven that $\Bar{x} \in \mathcal{H}^\perp$.
By Proposition \ref{estim:sup}, we have
 \begin{align*}
 \liminf_{n\to\infty} \varepsilon_n^{-(N-k)} c_{\varepsilon_n} \geq \omega_k \mathcal{M}(\bar{x}) \geq \omega_k
\inf_{\partial \Lambda \cap \mathcal{H}^\perp} \mathcal{M}.
 \end{align*}
This inequality, along with Proposition~\ref{estim:inf}, contradicts \eqref{hyp:Lambda}.

\medbreak
In order to obtain the last assertion, suppose by contradiction that there exist sequences $(\varepsilon_n)_n \subset
\R^+$, $(x_n)_{n}$  and $(y_n)_n
\subset \Lambda$ such that $\varepsilon_n \to 0$,
\begin{align*}
 u_{\varepsilon_n}(y_n) \geq \delta,
\end{align*}
and
\begin{align*}
 \lim_{n \to \infty} \frac{d_{\mathcal{H}}(x_{n},y_n)}{\varepsilon_n} = \infty. 
\end{align*}
Up to a subsequence, we can assume that $x_{n}\to \bar{x} \in \Lambda$ and $y_n\to
\bar{y} \in \Lambda$. In view of the second assertion, one has $\Bar{x} \in \mathcal{H}^\perp$ and $\Bar{y} \in
\mathcal{H}^\perp$. Therefore, by Proposition \ref{estim:sup},
\begin{align*}
 \liminf_{n\to\infty} \varepsilon_n^{-(N-k)} c_{\varepsilon_n} \geq \omega_k \left( \mathcal{M}(\bar{x}) +
\mathcal{M}(\bar{y}) \right) \geq 2\omega_k \inf_{\Lambda \cap \mathcal{H}^\perp} \mathcal{M}.
 \end{align*}
In view of the assumption of \eqref{hyp:Lambda}, this would contradict Proposition~\ref{estim:inf}.
\end{proof}
\section{Barrier functions}\label{sec:barrier}

\subsection{Linear inequation outside small balls}
In this section we prove that for $\varepsilon$ small enough, the solutions of the penalized problem \eqref{p2} are also
solutions of the initial problem \eqref{p1}. We follow the arguments of \cite{MVS}. First we notice that the solutions
of \eqref{p2} satisfy a linear inequation outside small balls.
\begin{lemma}\label{lemma:subsol}
 Suppose that the assumptions of Proposition \ref{Prop1} are satisfied and let $(u_{\varepsilon})_{\varepsilon>0} \subset
H^1_{V,\mathcal{H}}(\R^N)$ be positive solutions of \eqref{p2} found in Theorem \ref{Th1} and
$(x_{\varepsilon})_{\varepsilon>0} \subset \Lambda$ be such that
\begin{align*}
 \liminf_{\varepsilon\to 0} u_{\varepsilon}(x_{\varepsilon}) > 0.
\end{align*}
Then there exist $\rho > 0$ and $\varepsilon_0>0$ such that for all $\varepsilon \in (0,\varepsilon_0)$,
\begin{align}\label{lowsol}
 -\varepsilon^2 \left( \Delta u_{\varepsilon} + H u_{\varepsilon} \right) + (1 - \mu)V u_{\varepsilon} \leq 0 \hspace{0.5cm}
\text{in}\ \R^N\setminus B_\mathcal{H}(x_{\varepsilon},\varepsilon R).
\end{align}
\end{lemma}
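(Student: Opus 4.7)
The plan is to split $\R^N \setminus B_{\mathcal{H}}(x_\varepsilon, \varepsilon R)$ into two pieces --- the ``bad'' region $\Lambda \setminus B_{\mathcal{H}}(x_\varepsilon, \varepsilon R)$ where the penalization is inactive, and the ``good'' region $\R^N \setminus \Lambda$ where the penalization $\varepsilon^2 H + \mu V$ is built into $g_\varepsilon$ --- and verify \eqref{lowsol} separately on each. Rewriting \eqref{lowsol} as $-\varepsilon^2 \Delta u_\varepsilon + (1-\mu) V u_\varepsilon \leq \varepsilon^2 H u_\varepsilon$ (using $H, u_\varepsilon \geq 0$), the task reduces to bounding $g_\varepsilon(x, u_\varepsilon)$ above by $(\varepsilon^2 H + \mu V) u_\varepsilon$ pointwise on the complement of the small ball.

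On $\R^N \setminus \Lambda$ this is immediate from the very definition of the penalized nonlinearity: there $g_\varepsilon(x, s) \leq (\varepsilon^2 H(x) + \mu V(x)) s$ for every $s \geq 0$, and since $u_\varepsilon$ solves \eqref{p2}, the inequality \eqref{lowsol} follows without any restriction on $\varepsilon$.

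On $\Lambda \setminus B_{\mathcal{H}}(x_\varepsilon, \varepsilon R)$ we have $g_\varepsilon(x, u_\varepsilon) = K(x) f(u_\varepsilon)$, so the content of the lemma is to show that $K f(u_\varepsilon) \leq \mu V u_\varepsilon$ pointwise there for $\varepsilon$ small. This is where Proposition~\ref{Prop1}\,(iv) enters: since $\liminf_{\varepsilon \to 0} u_\varepsilon(x_\varepsilon) > 0$, we may choose $R>0$ and $\varepsilon_0 > 0$ so that $\|u_\varepsilon\|_{L^\infty(\Lambda \setminus B_{\mathcal{H}}(x_\varepsilon, \varepsilon R))} \leq \delta$ for every $\varepsilon \in (0,\varepsilon_0)$, where $\delta$ will be fixed momentarily. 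By $(f_1)$, $f(s) \leq C s^q$ for $s \in [0, 1]$, hence $K(x) f(u_\varepsilon) \leq C \, \|K\|_{L^\infty(\overline{\Lambda})}\, \delta^{q-1}\, u_\varepsilon$ pointwise. Because we have arranged $V > 0$ on $\overline{\Lambda}$ with $V$ continuous on $\overline{\Lambda}$, we have a positive lower bound $V \geq v_0 > 0$ on $\overline{\Lambda}$, so the choice
\[
  \delta := \min\Bigl\{ 1,\; \Bigl( \frac{\mu\, v_0}{C\, \|K\|_{L^\infty(\overline{\Lambda})}} \Bigr)^{\frac{1}{q-1}} \Bigr\}
\]
yields $K f(u_\varepsilon) \leq \mu V u_\varepsilon$, as required.

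The main (mild) obstacle is simply the consistent choice of $\delta$ relative to the structural data --- one must fix $\delta$ before invoking Proposition~\ref{Prop1}\,(iv), and this is legitimate precisely because $v_0$ and $\|K\|_{L^\infty(\overline{\Lambda})}$ depend only on $\Lambda$, $V$, $K$ and $f$ (not on $\varepsilon$). Once $\delta$, and then $R$ and $\varepsilon_0$, are chosen in this order, the two pointwise bounds patch together to give \eqref{lowsol} on all of $\R^N \setminus B_{\mathcal{H}}(x_\varepsilon, \varepsilon R)$.
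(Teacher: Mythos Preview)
Your proof is correct and follows essentially the same approach as the paper's: split into $\R^N\setminus\Lambda$ (where the penalization gives the bound for free) and $\Lambda\setminus B_{\mathcal{H}}(x_\varepsilon,\varepsilon R)$ (where $(f_1)$ plus the uniform smallness from Proposition~\ref{Prop1}(iv) forces $K f(u_\varepsilon)\leq \mu V u_\varepsilon$). The only cosmetic difference is that the paper fixes $\eta:=\inf_{\Lambda}\frac{\mu V}{K}$ and uses $(f_1)$ in the form $f(s)/s\leq\eta$ for small $s$, whereas you compute $\delta$ explicitly from the bound $f(s)\leq C s^q$; both arguments are equivalent.
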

\begin{proof}
 Set
\begin{align*}
 \eta := \inf_{x\in \Lambda} \frac{\mu V(x)}{K(x)}.
\end{align*}
Since $V$ and $K$ are bounded positive continuous functions on $\Bar{\Lambda}$, $\eta > 0$. By $(f_1)$, there exists
$\delta > 0$ such that 
\begin{align*}
 \frac{f(s)}{s} \leq \eta \hspace{0.5cm} \text{for all} \ s \leq \delta.
\end{align*}
By Proposition \ref{Prop1}, we can find $\varepsilon_0 > 0$ and $\rho >0$ such that for all $\varepsilon \in (0,\varepsilon_0]$,
one
has
\begin{align*}
 u_{\varepsilon}(x) \leq \delta \hspace{0.5cm} \text{for all} \ x \in \Lambda \setminus
B_\mathcal{H}(x_{\varepsilon},\varepsilon \rho).
\end{align*}
Hence
\begin{align*}
 K(x) f(u_{\varepsilon}(x)) \leq \mu V(x) u_{\varepsilon}(x) \hspace{0.5cm} \text{in} \ \Lambda \setminus
B_\mathcal{H}(x_{\varepsilon},\varepsilon \rho).
\end{align*}
We conclude that
\begin{align*}
 -\varepsilon^2 \Delta u_{\varepsilon} + (1 - \mu)V u_{\varepsilon} \leq -\varepsilon^2 \Delta u_{\varepsilon} + V u_{\varepsilon} - K
f(u_{\varepsilon}) = 0 \hspace{0.5cm} \text{in} \ \Lambda \setminus B_\mathcal{H}(x_{\varepsilon},\varepsilon \rho).
\end{align*}
The fact that $u_{\varepsilon}$ satisfies \eqref{lowsol} in $\R^N \setminus\Lambda$ follows directly from the definition of
the
penalized nonlinearity.
\end{proof}

This lemma suggests that we can compare the solution $u_{\varepsilon}$ with supersolutions of the operator $-\varepsilon^2
\left( \Delta + H \right) + (1 - \mu)V$ in order to obtain decay estimates of $u_{\varepsilon}$. 

\subsection{Comparison functions}

The next lemma provides a minimal positive solutions of the operator $- \Delta - H$ in $\R^N\setminus
\Bar{\Lambda}$.

\begin{lemma}
\label{lemmaPsiepsilon}
For every $\varepsilon > 0$, there exists $\Psi_\varepsilon \in C^2\left((\R^N\setminus \lbrace 0 \rbrace) \setminus \Lambda
\right)$ such that
\[
 \left\{ \begin{aligned}
          -\varepsilon^2(\Delta \Psi_\varepsilon + H \Psi_\varepsilon)+(1-\mu)V\Psi_\varepsilon &= 0 & &\text{in}\ \R^N\setminus
\Bar{\Lambda}, \\
	\Psi_{\varepsilon} &= 1 & &\text{on}\ \partial \Lambda,
         \end{aligned}
 \right.
\]
and
\begin{equation}
\label{PsiepsilonFiniteIntegral}
 \int_{\R^N\setminus \Lambda} \biggl( \abs{\nabla \Psi_\varepsilon(x)}^2 + \frac{\abs{\Psi_\varepsilon(x)}^2}{\abs{x}^2}
\biggr)\: dx <
\infty.
\end{equation}
Moreover, there exists $C > 0$ such that, for every $x \in \R^N\setminus \Lambda$ and every $\varepsilon > 0$, 
\begin{equation}
\label{DecayPsiepsilon}
 0 < \Psi_\varepsilon(x) \leq \frac{C}{(1+\abs{x})^{N-2}}.
\end{equation}
\end{lemma}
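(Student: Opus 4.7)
I would split the proof into a variational existence step and a comparison-principle argument for the pointwise decay.

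\emph{Existence, regularity, positivity, integrability.} Fix a nonnegative cut-off $\phi_0 \in C_c^\infty(\R^N \setminus \{0\})$ with $\phi_0 \equiv 1$ on an open neighbourhood of $\partial\Lambda$, and minimise the quadratic functional
\[
F_\varepsilon(u) := \int_{\R^N \setminus \overline{\Lambda}} \bigl( \varepsilon^2 \abs{\nabla u}^2 - \varepsilon^2 H u^2 + (1-\mu) V u^2 \bigr)\,dx
\]
on the affine class $\mathcal{A}_\varepsilon := \phi_0 + \mathcal{D}^{1,2}_0(\R^N \setminus \overline{\Lambda})$. By the Hardy-type positivity \eqref{positivity}, the form $\int(\abs{\nabla u}^2 - H u^2)$ is equivalent to $\int\abs{\nabla u}^2$ on $\mathcal{D}^{1,2}_0$; together with $V \geq 0$, this makes $F_\varepsilon$ coercive, strictly convex, and weakly lower semicontinuous on $\mathcal{A}_\varepsilon$, so the direct method yields a unique minimiser $\Psi_\varepsilon$ which weakly satisfies the stated PDE with trace $1$ on $\partial\Lambda$. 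Interior elliptic regularity upgrades $\Psi_\varepsilon$ to $C^2$ on $(\R^N \setminus \{0\}) \setminus \Lambda$, since $V$ and $H$ are continuous there. Because $F_\varepsilon(\abs{u}) = F_\varepsilon(u)$ and $\phi_0 \geq 0$, uniqueness forces $\Psi_\varepsilon \geq 0$, and Proposition~\ref{Th:comp} applied with $v \equiv 0$ upgrades this to $\Psi_\varepsilon > 0$. Estimate \eqref{PsiepsilonFiniteIntegral} is immediate from $\Psi_\varepsilon \in \mathcal{D}^{1,2}$ combined with Hardy's inequality.

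\emph{Pointwise decay.} For \eqref{DecayPsiepsilon}, I would compare $\Psi_\varepsilon$ with an explicit radial supersolution $\Phi$ of $-\Delta - H$ on $\R^N \setminus \overline{B(0,R)}$, for $R$ so large that $\overline{\Lambda} \subset B(0,R)$. The ansatz is $\Phi(x) := \abs{x}^{-(N-2)} g(\abs{x})$, for which the radial computation
\[
-\Delta \Phi = -\abs{x}^{-(N-2)} g'' + (N-3)\abs{x}^{-(N-1)} g'
\]
reduces the supersolution inequality to $-g'' + \frac{N-3}{r}g' \geq H g$. Choosing $g(r) := 1 + A\int_R^r ds/(s(\log s)^{1+\beta})$, which is bounded on $[R,\infty)$ because $\beta > 0$, one verifies the inequality for $A$ and $R$ sufficiently large by using the refined pointwise bound $H(x)\abs{x}^2 \leq \kappa(\log\abs{x})^{-(1+\beta)}$; in particular $\Phi(x) \leq C(1+\abs{x})^{-(N-2)}$. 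Since $V \geq 0$, the same $\Phi$ is a supersolution of the full operator $-\varepsilon^2(\Delta + H) + (1-\mu)V$. Rescaling $\Phi$ by a large constant so it dominates $\Psi_\varepsilon$ on $\partial B(0,R)$—where $\Psi_\varepsilon$ is bounded by interior elliptic regularity—Proposition~\ref{Th:comp} yields the desired inequality on $\R^N \setminus B(0,R)$. The matching $L^\infty$-bound on the compact region $\overline{B(0,R)} \setminus \overline{\Lambda}$ (which contains the origin) follows from Brezis--Kato/Moser iteration applied to the equation, based on $\Psi_\varepsilon \in L^2_{\mathrm{loc}}$ (a consequence of \eqref{PsiepsilonFiniteIntegral}) together with the strictly subcritical Hardy constant $\kappa < ((N-2)/2)^2$.

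\emph{Main obstacle.} The crux is achieving the sharp decay exponent $N-2$. A bare power barrier $\abs{x}^{-\alpha}$ built only from the coarse estimate $H \leq \kappa\abs{x}^{-2}$ forces $\alpha(N-2-\alpha) \geq \kappa$, yielding at best $\alpha = (N-2+\sqrt{(N-2)^2 - 4\kappa})/2 < N-2$, so the sharp rate is unreachable by pure powers. The logarithmic factor $((\log\abs{x})^2+1)^{(1+\beta)/2}$ deliberately inserted in the definition of $H$ is precisely what permits a bounded log-integral correction $g(r)$ to close the gap and recover the exponent $N-2$; designing this correction is the only genuinely delicate step.
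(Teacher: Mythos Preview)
Your existence/regularity/integrability argument is essentially the paper's, and your log-corrected barrier $\Phi(x)=\abs{x}^{-(N-2)}g(\abs{x})$ is a reparametrisation of the paper's barrier $W(x)=\abs{x}^{-(N-2)}\bigl((N-2)\beta-\kappa(\log\abs{x})^{-\beta}\bigr)$; your ``main obstacle'' paragraph is exactly right about why the logarithm in $H$ is needed.

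The genuine gap is the \emph{uniformity in $\varepsilon$} of the constant $C$ in \eqref{DecayPsiepsilon}. You compare $\Psi_\varepsilon$ with $\Phi$ on $\R^N\setminus B(0,R)$ after ``rescaling $\Phi$ by a large constant so it dominates $\Psi_\varepsilon$ on $\partial B(0,R)$, where $\Psi_\varepsilon$ is bounded by interior elliptic regularity''. But that interior bound, and likewise your Moser/Brezis--Kato bound on $\overline{B(0,R)}\setminus\overline{\Lambda}$, depend on $\norm{\Psi_\varepsilon}_{L^2}$ (or $L^{2^*}$) on a neighbourhood, and the minimisation only yields $\int\abs{\nabla\Psi_\varepsilon}^2 \le C(1+\varepsilon^{-2})$, which blows up as $\varepsilon\to 0$. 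So your constant is $\varepsilon$-dependent as written.

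The paper fixes this with one extra step that you should insert: solve the $\varepsilon$-free problem
\[
-\Delta\Psi - H\Psi = 0 \ \text{ in } \R^N\setminus\overline{\Lambda}, \qquad \Psi=1 \ \text{ on } \partial\Lambda,
\]
and prove the decay \eqref{DecayPsiepsilon} for this single function $\Psi$ (your barrier $\Phi$ handles infinity; near the origin the paper uses the explicit bounded supersolution $W(x)=(N-2)\beta-\kappa(\log\tfrac{1}{\abs{x}})^{-\beta}$ on a small ball, which is cleaner than Moser iteration against a critical Hardy potential). Then observe from the equation and $V\ge 0$ that
\[
-\Delta\Psi_\varepsilon - H\Psi_\varepsilon = -\tfrac{(1-\mu)}{\varepsilon^2}V\Psi_\varepsilon \le 0,
\]
so $\Psi_\varepsilon$ is a subsolution of the \emph{same} $\varepsilon$-independent operator, with the \emph{same} boundary value $1$ on $\partial\Lambda$. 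Proposition~\ref{Th:comp} on the full domain $\R^N\setminus\overline{\Lambda}$ then gives $\Psi_\varepsilon\le\Psi$ for every $\varepsilon>0$, and the uniform constant is simply the one for $\Psi$.
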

\begin{proof}
The function $\Psi_\varepsilon$ is obtained by minimimizing 
\[
 \int_{\R^N\setminus \Lambda} \left( \varepsilon^2 \left( \abs{\nabla u}^2 - H u^2 \right) + (1-\mu) V u^2 \right)\: dx
\]
on the set
\[
 \{ u \in H^1_V(\R^N)\: :\: u=1 \text{ on } \partial \Lambda\}.
\]
By classical elliptic regularity theory, $\Psi_\varepsilon \in C^2\left((\R^N\setminus \lbrace 0 \rbrace) \setminus \Lambda
\right)$.
The estimate \eqref{PsiepsilonFiniteIntegral} follows from \eqref{positivity}.

In order to obtain the estimate \eqref{DecayPsiepsilon} consider the problem
\[
 \left\{ \begin{aligned}
          -\Delta \Psi - H \Psi &= 0 & &\text{in}\ \R^N\setminus \Bar{\Lambda}, \\
	\Psi &= 1 & &\textnormal{on}\ \partial \Lambda.
         \end{aligned}
 \right.
\]
We have just proved that this problem has a solution $\Psi \in C^2((\R^N \setminus \{0\})\setminus \Lambda)$ such that
\begin{equation}
\label{PsiFiniteIntegral}
 \int_{\R^N\setminus \Lambda} \biggl( \abs{\nabla \Psi(x)}^2 + \frac{\abs{\Psi(x)}^2}{\abs{x}^2} \biggr)\: dx <
\infty.
\end{equation}

Now set for $\rho \in (0,1)$ and $x \in B(0, \rho)$,
\begin{align*}
 W(x) := (N-2)\beta - \kappa \left( \log\frac{1}{\abs{x}} \right)^{-\beta},
\end{align*}
We compute that
\begin{align*}
 -\Delta W(x) = \frac{\kappa \beta}{\abs{x}^2} \left[ (N-2) \left( \log\frac{1}{\abs{x}}\right)^{-(1+\beta)} + (1+\beta)
\left( \log\frac{1}{\abs{x}}\right)^{-(2+\beta)} \right].
\end{align*}
Since for $\abs{x} \leq 1$,
\[
H(x)\leq \frac{\kappa}{(\abs{x}^2 \log \frac{1}{\abs{x}})^{1+\beta}}
\]
the function $W$ is a supersolution of $-\Delta - H$ in $B(0,1)$. 
Moreover, if one takes $\rho < 1$ such that 
\[
  (N-2)\beta \left( \log\frac{1}{\rho} \right)^{\beta} > \kappa,
\]
$W$ is positive on $\partial B(0,\rho)$. 
In view of \eqref{PsiFiniteIntegral} Proposition~\ref{Th:comp} implies that $\Psi$ is bounded from above by
a positive multiple of $W$ in $B(0,\rho)$. Since $\Psi$ is continuous and $W$ is bounded in $B(0,1)$, we obtain that
$\Psi$ is bounded in $B(0, 1)$.
By similarly considering
\[
 W(x) := \frac{1}{\abs{x}^{N-2}} \left((N-2)\beta - \kappa \left( \log\abs{x} \right)^{-\beta}\right)
\]
(see Lemma 3.4 of \cite{MVS}), we obtain that $\Psi(x)\sim\abs{x}^{N-2}$. We have thus proven that
\[
 \Psi(x)\leq \frac{C}{(1+\abs{x})^{N-2}}.
\]

Now, note that since $V$ is nonnegative,
\[
 -\Delta \Psi_\varepsilon -H \Psi_\varepsilon \leq 0.
\]
In view of \eqref{PsiFiniteIntegral} and \eqref{PsiepsilonFiniteIntegral}, Proposition~\ref{Th:comp} is applicable, and
for every $x \in \R^N\setminus \Lambda$,
\[
 \Psi_\varepsilon(x) \leq \Psi(x) \leq \frac{C}{(1+\abs{x})^{N-2}}.\qedhere
\]
\end{proof}

As explained in \cite{MVS}, the estimate \eqref{DecayPsiepsilon} is the best one can hope for if $V$ decays rapidly at infinity
or is compactly supported. However, if $V$ decays quadratically or subquadratically at infinity, we can improve
\eqref{DecayPsiepsilon}.

\begin{lemma}
\label{lemmaImprovedDecay}
Let $\Psi_\varepsilon$ be given by Lemma~\ref{lemmaPsiepsilon}.
\begin{enumerate}[(1)]
\item \label{Psiepsiloninfinityquadratic} If $\liminf_{\abs{x}\rightarrow\infty} V(x)\abs{x}^{2} > 0$, then there exist
$\lambda > 0$, $R > 0$ and $C > 0$ such that for every $\varepsilon > 0$ and $x \in \R^N\setminus B(0,R)$, 
\[
  \Psi_\varepsilon (x) \leq
C\left(\frac{R}{\abs{x}}\right)^{\frac{N-2}{2}+\sqrt{\left(\frac{N-2}{2}\right)^2 - \kappa
+ \frac{\lambda^2}{\varepsilon^2}}}.
\]
\item \label{PsiepsiloninfinitySuperquadratic} If $\liminf_{\abs{x}\rightarrow\infty} V(x)\abs{x}^{\alpha} > 0$ with
$\alpha < 2$, then there exist $\lambda > 0$, $R > 0$, $C > 0$ and $\varepsilon_0 > 0$ such that for every 
$\varepsilon \in (0,\varepsilon_0)$ and $x \in \R^N\setminus B(0,R)$, 
\[
  \Psi_\varepsilon (x) \leq C\exp \left(-\frac{\lambda}{\varepsilon}
\left(\abs{x}^\frac{2-\alpha}{2}-R^\frac{2-\alpha}{2}\right)\right).
\]
\item  If $\liminf_{\abs{x}\rightarrow 0} V(x)\abs{x}^{2} > 0$, then there exist $\lambda > 0$, $r > 0$ and $C > 0$
such that for every $\varepsilon > 0$ and $x \in B(0,r)$, 
\[
  \Psi_\varepsilon (x) \leq
C\left(\frac{\abs{x}}{r}\right)^{\sqrt{\left(\frac{N-2}{2}\right)^2 -\kappa
+\frac{\lambda^2}{\varepsilon^2}}-\frac{N-2}{2}}.
\]
\item If $\liminf_{\abs{x}\rightarrow 0} V(x)\abs{x}^{\alpha} > 0$ with $\alpha < 2$, then there exist $\lambda >
0$, $R > 0$, $C > 0$ and $\varepsilon_0 > 0$ such that for every $\varepsilon \in (0,\varepsilon_0)$ and $x \in B(0,r)$, 
\[
  \Psi_\varepsilon (x) \leq C\exp \left(-\frac{\lambda}{\varepsilon}
\left(\abs{x}^{-\frac{\alpha-2}{2}}-r^{-\frac{\alpha-2}{2}}\right)\right).
\]
\end{enumerate}
\end{lemma}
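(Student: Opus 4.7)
The plan is to build explicit barrier functions $W$ that are supersolutions of the operator $L_\varepsilon u := -\varepsilon^2(\Delta u + H u) + (1-\mu) V u$ on the appropriate domain (the complement of a large ball for cases~(1)--(2), a small punctured ball for cases~(3)--(4)), and to apply the comparison principle of Proposition~\ref{Th:comp}. The baseline bound \eqref{DecayPsiepsilon} already controls $\Psi_\varepsilon$ on the boundary of these domains, so after multiplying the barrier by a constant we may assume $W \geq \Psi_\varepsilon$ there; the integrability conditions required by Proposition~\ref{Th:comp} will follow from the explicit form of $W$ combined with \eqref{PsiepsilonFiniteIntegral}.

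For the power-type cases~(1) and~(3), I would try $W(x) = |x|^{-s}$ and $W(x) = |x|^{s}$ respectively. A direct radial computation gives $-\Delta(|x|^{\pm s}) = \mp s(N-2 \mp s)|x|^{\pm s - 2}$. Combined with the bound $H \leq \kappa |x|^{-2}$ and the hypothesis $V \geq \lambda^2 |x|^{-2}$ on the barrier region, the condition $L_\varepsilon W \geq 0$ reduces to the quadratic inequality $s^2 \mp (N-2) s + \kappa - (1-\mu) \lambda^2 / \varepsilon^2 \leq 0$. The largest admissible exponent is
\[
s = \pm \frac{N-2}{2} + \sqrt{\Bigl(\frac{N-2}{2}\Bigr)^2 - \kappa + (1-\mu)\lambda^2/\varepsilon^2},
\]
which matches the statement after absorbing $(1-\mu)$ into $\lambda$.

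For the exponential cases~(2) and~(4), I would take $W(x) = \exp\bigl(-\tfrac{\lambda}{\varepsilon} \phi(x)\bigr)$ with $\phi(x) = |x|^{(2-\alpha)/2}$ in case~(2) and $\phi(x) = |x|^{-(\alpha-2)/2}$ in case~(4), the exponents being chosen so that $\phi \to \infty$ on the relevant side. A direct computation yields
\[
L_\varepsilon W = W \Bigl( -\lambda^2 |\nabla \phi|^2 + \varepsilon \lambda \Delta \phi + (1-\mu) V - \varepsilon^2 H \Bigr).
\]
Since $|\nabla \phi|^2$ scales exactly like $|x|^{-\alpha}$, matching the lower bound on $V$, choosing $\lambda$ small enough so that $\lambda^2 |\nabla \phi|^2 \leq \tfrac{1}{2}(1-\mu) V$ reserves a positive fraction of the $V$-term. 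The remaining contributions $\varepsilon \lambda \Delta \phi$ and $\varepsilon^2 H$ are of strictly lower order on the barrier region (because $\alpha < 2$ and $H$ decays like $|x|^{-2}$ times a logarithmic factor), and can be absorbed for $\varepsilon$ small enough.

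The main obstacle will be the cases near the origin (cases~(3) and~(4)), where the domain $B(0,r)$ contains the singular point and one must exhaust it by annuli $B(0,r) \setminus \overline{B(0,r_0)}$ and send $r_0 \to 0$; since $W$ vanishes at the origin (in case~(3) directly, in case~(4) from the exponential decay) the inner boundary contribution disappears in the limit. Elsewhere the argument is parallel to Lemma~\ref{lemmaPsiepsilon}, and the explicit form of $W$ makes it straightforward to verify that $\nabla(W - \Psi_\varepsilon)_- \in L^2$ and $(W - \Psi_\varepsilon)_-/|x| \in L^2$ as required by Proposition~\ref{Th:comp}.
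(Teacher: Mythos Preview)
Your proposal is correct and follows essentially the same approach as the paper: construct an explicit radial barrier $W$ (a power of $\abs{x}$ in the quadratic cases, an exponential of $\abs{x}^{(2-\alpha)/2}$ in the sub/super-quadratic cases), verify it is a supersolution of $-\varepsilon^2(\Delta+H)+(1-\mu)V$ on the relevant region, and apply the comparison principle with $\Psi_\varepsilon$ using the baseline bound \eqref{DecayPsiepsilon} on the boundary. The paper's own proof is much terser (it simply writes down the barrier $W$ and says ``one then checks'' it is a supersolution, treating cases (3)--(4) by the remark ``similar''), so your additional details---the quadratic inequality for $s$, the explicit computation of $L_\varepsilon W$ in the exponential case, and the exhaustion argument near the origin---go beyond what the paper records but do not diverge from its strategy.
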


\begin{proof}
For \eqref{Psiepsiloninfinityquadratic}, there exist $R > 0$ and $\lambda > 0$ such that for 
$x \in \R^N\setminus B(0,R)$
\[
 (1-\mu)V(x) \geq \frac{\lambda^2}{\abs{x}^2}.
\]
One then checks that
\[
 W(x)=\left(\frac{R}{\abs{x}}\right)^{\frac{N-2}{2}+\sqrt{\left(\frac{N-2}{2}\right)^2 - \kappa
+ \frac{\lambda^2}{\varepsilon^2}}}
\]
is a supersolution in $\R^N\setminus B(0,R)$.

For \eqref{PsiepsiloninfinitySuperquadratic}, there exist $R > 0$ and $\eta > 0$ such that for 
$x \in \R^N\setminus B(0,R)$
\[
 (1-\mu)V(x) \geq \frac{\eta}{\abs{x}^\alpha}.
\]
One then checks that 
\[
 W(x)=\exp \left(-\frac{\lambda}{\varepsilon} \left(\abs{x}^{\frac{2-\alpha}{2}}-R^{\frac{2-\alpha}{2}}\right)\right)
\]
is a supersolution in $\R^N\setminus B(0,R)$ with $\lambda^2 < (\frac{2}{2-\alpha})^2 \nu$ and $\varepsilon$ 
small enough.

The proofs of the other assertions are similar.
\end{proof}

The other tool is a function that describes the exponential decay of $u_\varepsilon$ inside $\Lambda$.

\begin{lemma}\label{CompPeak}
Let $\Bar{x} \in \Lambda$ and $R > 0$ be such that 
\begin{equation}
\label{condRxeps}
 B_{\mathcal{H}}(\Bar{x},R) \subset \Lambda.
\end{equation} 
Define
\begin{align}\label{def:Phieps}
\Phi_{\varepsilon}^{\Bar{x}}(x) := \cosh \left( \lambda \frac{R-d_{\mathcal{H}}(x,\Bar{x})}{\varepsilon}\right).
\end{align}
There exists $\lambda > 0$ and $\varepsilon_0 > 0$ such that for every $\varepsilon \in (0,\varepsilon_0)$, one has
\begin{align*}
 -\varepsilon^2 \Delta \Phi^{\Bar{x}}_{\varepsilon} + (1-\mu) V \Phi^{\Bar{x}}_{\varepsilon} \geq 0 \ \text{in}\
B_\mathcal{H}(\Bar{x},R).
\end{align*}
\end{lemma}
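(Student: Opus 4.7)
The plan is to verify the supersolution inequality by direct computation, exploiting the positive lower bound of $V$ on the closed ball to absorb any unfavorable contributions from $\Delta\rho$ once $\lambda$ and $\varepsilon$ are small enough. Set $\rho(x):=d_{\mathcal{H}}(x,\bar x)$, $r:=|x''|$, $\bar r:=|\bar x''|$, and write $\Phi_\varepsilon^{\bar x}=h(\rho)$ with $h(s)=\cosh(\lambda(R-s)/\varepsilon)$. Two preliminary computations are $|\nabla\rho|=1$ (by direct verification from $\rho^2=|x'-\bar x'|^2+(r-\bar r)^2$) and, using the decomposition of the Euclidean Laplacian in the coordinates $(x',r)\in\R^{N-k-1}\times\R_+$ as $\Delta=\Delta_{x'}+\partial_r^2+\frac{k}{r}\partial_r$ when acting on functions depending only on $(x',r)$,
\[
\Delta\rho=\frac{N-k-1}{\rho}+\frac{k(r-\bar r)}{r\rho}=\frac{1}{\rho}\Bigl(N-1-\frac{k\bar r}{r}\Bigr).
\]
The chain rule $\Delta\Phi_\varepsilon^{\bar x}=h''(\rho)+h'(\rho)\Delta\rho$ then yields, on $B_{\mathcal{H}}(\bar x,R)\setminus\{\bar x\}$,
\[
-\varepsilon^2\Delta\Phi_\varepsilon^{\bar x}+(1-\mu)V\Phi_\varepsilon^{\bar x}=\bigl((1-\mu)V-\lambda^2\bigr)\Phi_\varepsilon^{\bar x}+\varepsilon\lambda\sinh\!\Bigl(\frac{\lambda(R-\rho)}{\varepsilon}\Bigr)\Delta\rho.
\]

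Since $\overline{B_{\mathcal{H}}(\bar x,R)}\subset\overline\Lambda$ is compact and $V$ is continuous and positive on $\overline\Lambda$, set $V_0:=\inf_{\overline{B_{\mathcal{H}}(\bar x,R)}}V>0$ and fix $\lambda>0$ with $\lambda^2\leq\tfrac{1}{2}(1-\mu)V_0$; this makes the first summand at least $\tfrac{1}{2}(1-\mu)V_0\,\Phi_\varepsilon^{\bar x}$. For the second summand I split according to the sign of $\Delta\rho$. On the \emph{good} region $\{r\geq k\bar r/(N-1)\}$ — which contains a full neighborhood of $\bar x$ when $k<N-1$, and reduces to $\{r\geq\bar r\}$ when $k=N-1$ — one has $\Delta\rho\geq 0$ and $\sinh(\lambda(R-\rho)/\varepsilon)\geq 0$, so this summand is already nonnegative. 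On the complementary \emph{bad} region $\{r<k\bar r/(N-1)\}$, the inequalities $\rho\geq\bar r-r$ and $r<k\bar r/(N-1)$ give $\rho\geq\bar r(N-1-k)/(N-1)>0$ when $k<N-1$, while the offending factor $(N-k-1)/\rho$ in $\Delta\rho$ vanishes identically when $k=N-1$; combined with the lower bound $r\geq\bar r-R>0$ (valid because $\overline\Lambda\cap\mathcal{H}=\emptyset$ forces $R\leq\bar r$, and the closed ball stays in $\overline\Lambda\setminus\mathcal{H}$), this yields a uniform bound $|\Delta\rho|\leq C_0=C_0(N,k,\bar r,R)$ on the bad region.

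Using $\sinh\leq\cosh$, the bad contribution is therefore at least $-\varepsilon\lambda C_0\,\Phi_\varepsilon^{\bar x}$, so
\[
-\varepsilon^2\Delta\Phi_\varepsilon^{\bar x}+(1-\mu)V\Phi_\varepsilon^{\bar x}\geq\Bigl(\tfrac{1}{2}(1-\mu)V_0-\varepsilon\lambda C_0\Bigr)\Phi_\varepsilon^{\bar x}\geq 0
\]
as soon as $\varepsilon\leq\varepsilon_0:=(1-\mu)V_0/(2\lambda C_0)$. The main obstacle is the point $\bar x$ itself, where $\Phi_\varepsilon^{\bar x}$ is merely Lipschitz and $\Delta\rho$ is classically undefined. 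The key geometric observation that rescues the argument is that $\Delta\rho\geq 0$ in a full neighborhood of $\bar x$ (because $r\to\bar r$ there), so the bad region is automatically bounded away from $\bar x$; moreover, at $\bar x$ itself $-\Delta\Phi_\varepsilon^{\bar x}$ either diverges to $+\infty$ when $k<N-1$ (through the $(N-k-1)/\rho$ term) or picks up a nonnegative distributional measure supported on the $k$-sphere through $\bar x$ when $k=N-1$ (from the cusp of $\Phi_\varepsilon^{\bar x}$), so the inequality extends to all of $B_{\mathcal{H}}(\bar x,R)$ in the distributional sense required by the comparison argument of Proposition~\ref{Th:comp}.
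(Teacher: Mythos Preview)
Your proof is correct and follows the same approach as the paper: both compute $-\varepsilon^2\Delta\Phi+(1-\mu)V\Phi$ via the chain rule, choose $\lambda^2<(1-\mu)\inf_\Lambda V$, and argue that the remaining $\sinh$ term is controlled for small $\varepsilon$. Your good/bad region split and your treatment of the distributional inequality on the singular set $\{\rho=0\}$ make explicit what the paper leaves to the reader with the single sentence ``This last expression is positive if $\varepsilon$ is sufficiently small.''
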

\begin{proof}
First one computes
\begin{multline*}
 -\varepsilon^2\Delta \Phi^{\Bar{x}}_\varepsilon(x)=-\lambda^2 \cosh\left(\frac{\lambda}{\varepsilon}\left(R-d_{\mathcal{H}}(x,
\Bar{x})\right)\right) \\
+\frac{\varepsilon \lambda}{d_{\mathcal{H}}(x, \Bar{x})} \left( N-1
 - k \frac{\abs{\Bar{x}''}}{\abs{x''}} \right)
\sinh\left(\frac{\lambda}{\varepsilon}\left(R-d_{\mathcal{H}}(x, \Bar{x})\right)\right).
\end{multline*}
Let us choose $\lambda > 0$ such that $\lambda^2 < (1-\mu) \inf_{\Lambda} V$. 
In view of \eqref{condRxeps}, one has for $x \in B_{\mathcal{H}}(\Bar{x}, R)$, 
\begin{multline*}
 -\varepsilon^2\Delta \Phi^{\Bar{x}}_\varepsilon(x)+ (1-\mu) V \Phi^{\Bar{x}}_\varepsilon(x) \\ 
\geq \frac{\varepsilon \lambda}{d_{\mathcal{H}}(x, \Bar{x})} \left( N-1
 - k \frac{\abs{\Bar{x}''}}{\abs{x''}} \right)
 \sinh\left(\frac{\lambda}{\varepsilon}\left(R-d_{\mathcal{H}}(x, \Bar{x})\right)\right) \\
 + \left( (1-\mu) \inf_{\Lambda} V - \lambda^2 \right) 
 \cosh\left(\frac{\lambda}{\varepsilon}\left(R-d_{\mathcal{H}}(x, \Bar{x})\right)\right).
\end{multline*}
This last expression is positive if $\varepsilon$ is sufficiently small.
\end{proof}

\begin{lemma}\label{lemma:barrier}
Let $(x_{\varepsilon})_{\varepsilon} \subset \Lambda$ be such that 
 \begin{align*}
 \liminf_{\varepsilon \to 0} d_{\mathcal{H}}(x_{\varepsilon}, \partial \Lambda) > 0
 \end{align*}
and $\rho > 0$.
Then, there exist $\varepsilon_0 > 0$ and a family of functions $(W_{\varepsilon})_{0<\varepsilon<\varepsilon_0} \subset
C^{1,1}_{\textnormal{loc}}((\R^N \setminus \{0\}) \setminus B_\mathcal{H}(x_{\varepsilon},\varepsilon \rho))$ such that for
all $\varepsilon \in (0,\varepsilon_0)$, one has
 \begin{enumerate}[(i)]
 \item $W_{\varepsilon}$ satisfies the inequation 
\begin{align*}
 -\varepsilon^2 \left( \Delta + H\right)  W_{\varepsilon} + (1-\mu) V W_{\varepsilon} \geq 0 \ \text{in}\ \R^N \setminus
B_\mathcal{H}(x_{\varepsilon},\varepsilon \rho),
\end{align*}
 \item\label{barrierb} $\nabla W_{\varepsilon} \in L^2(\R^N\setminus B_\mathcal{H}(x_{\varepsilon},\varepsilon \rho))$ and
$\frac{W_{\varepsilon}}{\abs{x}} \in L^2(\R^N\setminus B_\mathcal{H}(x_{\varepsilon},\varepsilon \rho))$,
 \item \label{barrierc} $W_{\varepsilon}\geq 1$ on $\partial B_\mathcal{H}(x_{\varepsilon},\varepsilon \rho)$,
\item \label{barrierd} for every $x \in B_\mathcal{H}(x_{\varepsilon},\varepsilon \rho)$,
\[
 W_{\varepsilon}(x) \leq C \exp{\left( -\frac{\lambda}{\varepsilon} \frac{d_{\mathcal{H}}(x, x_\varepsilon)}{1+d_{\mathcal{H}}(x,
x_\varepsilon)}\right)}
\left( 1+\abs{x} \right)^{-(N-2)}, \hspace{0.5cm} x \in \R^N. 
\]
 \end{enumerate}
Moreover,
\begin{enumerate}[(1)]
\item \label{Wepsiloninfinityquadratic} If $\liminf_{\abs{x}\rightarrow\infty} V(x)\abs{x}^{2} > 0$, then there exists
$\lambda > 0$, $\nu > 0$ and $C > 0$ such that for $\varepsilon > 0$ small enough, 
\[
  W_\varepsilon (x) \leq C \exp{\left( -\frac{\lambda}{\varepsilon} \frac{d_{\mathcal{H}}(x, x_\varepsilon)}{1+d_{\mathcal{H}}(x,
x_\varepsilon)}\right)}
\left( 1+\abs{x} \right)^{-\frac{\nu}{\varepsilon}}.
\]
\item \label{WepsiloninfinitySuperquadratic} If $\liminf_{\abs{x}\rightarrow\infty} V(x)\abs{x}^{\alpha} > 0$ with
$\alpha > 2$, then there exists $\lambda > 0$ and $C > 0$ such that for $\varepsilon > 0$ small enough, 
\[
W_\varepsilon (x) \leq C\exp \left(-\frac{\lambda}{\varepsilon}\frac{d_{\mathcal{H}}(x, x_\varepsilon)}{1+d_\mathcal{H}(x,
x_\varepsilon)}(1+\abs{x})^\frac{2-\alpha}{2}\right).
\]
\item \label{Wepsilon0quadratic} If $\liminf_{\abs{x}\rightarrow 0} V(x)\abs{x}^{2} > 0$, then there exists $\lambda >
0$, $\nu > 0$ and $C > 0$ such that for $\varepsilon > 0$ small enough, 
\[
  W_\varepsilon (x) \leq C \exp{\left( -\frac{\lambda}{\varepsilon} \frac{d_{\mathcal{H}}(x, x_\varepsilon)}{1+d_{\mathcal{H}}(x,
x_\varepsilon)}\right)}
\left( \frac{\abs{x}}{1+\abs{x}}  \right)^{\frac{\nu}{\varepsilon}}.
\]
\item \label{Wepsilon0Superquadratic}If $\liminf_{\abs{x}\rightarrow 0} V(x)\abs{x}^{\alpha} > 0$ with $\alpha >
2$, 
then there exists $\lambda > 0$ and $C > 0$ such that for $\varepsilon > 0$ small enough, 
\[
  W_\varepsilon (x) \leq C\exp \left(-\frac{\lambda}{\varepsilon}\frac{d_{\mathcal{H}}(x, x_\varepsilon)}{1+d_\mathcal{H}(x,
x_\varepsilon)}\left(\frac{\abs{x}}{1+\abs{x}}\right)^\frac{\alpha-2}{2}\right).
\]
\end{enumerate}
\end{lemma}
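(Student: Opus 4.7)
The plan is to construct $W_\varepsilon$ by gluing two supersolutions of $L:=-\varepsilon^2(\Delta+H)+(1-\mu)V$: the exponentially decaying peak barrier $\Phi_\varepsilon^{x_\varepsilon}$ of Lemma~\ref{CompPeak} inside a fixed ball $B_{\mathcal{H}}(x_\varepsilon,R_0)\subset\Lambda$, and a decay barrier $\widetilde\Psi_\varepsilon$ modeled on $\Psi_\varepsilon$ of Lemma~\ref{lemmaPsiepsilon} on the complement. Using the hypothesis $\liminf_{\varepsilon\to 0} d_{\mathcal{H}}(x_\varepsilon,\partial\Lambda)>0$, I fix $R_0>0$ such that $\varepsilon\rho<R_0$ and $B_{\mathcal{H}}(x_\varepsilon,R_0)\subset\Lambda$ for all small $\varepsilon$. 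I then obtain $\widetilde\Psi_\varepsilon$ by minimizing
\[
  u\longmapsto \int_{\R^N\setminus B_{\mathcal{H}}(x_\varepsilon,R_0)} \bigl(\varepsilon^2(\abs{\nabla u}^2-Hu^2)+(1-\mu)Vu^2\bigr)\,dx
\]
among functions equal to $1$ on $\partial B_{\mathcal{H}}(x_\varepsilon,R_0)$. The Hardy-based positivity~\eqref{positivity} yields a finite-energy minimizer solving $L\widetilde\Psi_\varepsilon=0$ on that domain, and rerunning the Kelvin-transform comparison of Lemma~\ref{lemmaPsiepsilon} gives $\widetilde\Psi_\varepsilon(x)\leq C/(1+\abs{x})^{N-2}$.

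Setting $C_\varepsilon:=\cosh(\lambda(R_0-\varepsilon\rho)/\varepsilon)$ with $\lambda$ from Lemma~\ref{CompPeak}, I define
\[
  W_\varepsilon(x):=\frac{1}{C_\varepsilon}\begin{cases}\Phi_\varepsilon^{x_\varepsilon}(x)&\text{if }\varepsilon\rho\leq d_{\mathcal{H}}(x,x_\varepsilon)\leq R_0,\\ \widetilde\Psi_\varepsilon(x)&\text{if } d_{\mathcal{H}}(x,x_\varepsilon)\geq R_0.\end{cases}
\]
Since $\Phi_\varepsilon^{x_\varepsilon}=\widetilde\Psi_\varepsilon=1$ at $d_{\mathcal{H}}=R_0$, $W_\varepsilon$ is continuous and equals $1$ on $\partial B_{\mathcal{H}}(x_\varepsilon,\varepsilon\rho)$, which gives~\eqref{barrierc}; item~\eqref{barrierb} follows from the finite energies of $\Phi_\varepsilon^{x_\varepsilon}$ on the bounded annulus and of $\widetilde\Psi_\varepsilon$ combined with Hardy's inequality. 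For~(i), each piece is a classical supersolution, by Lemma~\ref{CompPeak} (noting that $\varepsilon^2 H$ is bounded on $\Lambda$ and can be absorbed into the positive margin $(1-\mu)\inf_\Lambda V-\lambda^2$) and by the Euler--Lagrange equation of $\widetilde\Psi_\varepsilon$. Across the interface $\partial B_{\mathcal{H}}(x_\varepsilon,R_0)$, the normal derivative of $\Phi_\varepsilon^{x_\varepsilon}$ vanishes because $\cosh'(0)=0$, while that of $\widetilde\Psi_\varepsilon$ is nonpositive, so the distributional Laplacian of $W_\varepsilon$ gains a nonnegative surface-measure contribution and the inequality holds in the weak sense required by Proposition~\ref{Th:comp}, as seen by testing against $(W_\varepsilon-v)_-$.

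For the decay estimate~\eqref{barrierd}, inside the ball I combine $\cosh(\lambda(R_0-d)/\varepsilon)/C_\varepsilon\leq 2e^{-\lambda d/\varepsilon}$ with the elementary inequality $e^{-\lambda d/\varepsilon}\leq e^{-\lambda d/(\varepsilon(1+d))}$ and the fact that $(1+\abs{x})^{-(N-2)}$ is bounded from below on $\overline{B_{\mathcal{H}}(x_\varepsilon,R_0)}$; outside, $\widetilde\Psi_\varepsilon/C_\varepsilon\leq Ce^{-\lambda R_0/\varepsilon}(1+\abs{x})^{-(N-2)}$, which absorbs into the target bound for any new constant $\lambda'<\lambda R_0$ since $d/(1+d)\leq 1$. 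The improved estimates~\eqref{Wepsiloninfinityquadratic}--\eqref{Wepsilon0Superquadratic} require no change to the construction: one simply substitutes the sharper bound on $\widetilde\Psi_\varepsilon$ obtained by running the corresponding case of Lemma~\ref{lemmaImprovedDecay} on the exterior of $B_{\mathcal{H}}(x_\varepsilon,R_0)$. The main technical obstacle I anticipate is the justification of the supersolution property across the interface in presence of the singular $H$ and of $V$ possibly vanishing at infinity, together with obtaining $C^{1,1}_{\mathrm{loc}}$ regularity, which may require either a mollification near $\partial B_{\mathcal{H}}(x_\varepsilon,R_0)$, or taking $W_\varepsilon$ to be the direct minimizer on $(\R^N\setminus\{0\})\setminus B_{\mathcal{H}}(x_\varepsilon,\varepsilon\rho)$ with boundary value $1$ and then comparing it pointwise with the explicit glued upper barrier constructed above.
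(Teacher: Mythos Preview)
Your overall strategy---glue the exponential peak barrier $\Phi_\varepsilon^{x_\varepsilon}$ to an outer decay function, then normalize---is exactly the paper's. The difference is in how the outer piece is manufactured, and that difference is precisely the technical obstacle you flag at the end.

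The paper does \emph{not} minimize over $\R^N\setminus B_{\mathcal{H}}(x_\varepsilon,R_0)$. Instead it takes the fixed $\Psi_\varepsilon$ of Lemma~\ref{lemmaPsiepsilon}, defined on $\R^N\setminus\bar\Lambda$ with boundary value $1$, and extends it to a $C^2$ function $\tilde\Psi_\varepsilon$ on $\R^N\setminus\{0\}$ that is identically $1$ on $\Lambda$ and agrees with $\Psi_\varepsilon$ outside a slightly larger compact set $U\supset\bar\Lambda$ (with $0\notin\bar U$, and $U$ small enough that $V>0$ on $\bar U$). The barrier is then $\Phi_\varepsilon^{x_\varepsilon}$ on $B_{\mathcal{H}}(x_\varepsilon,R)$ and $\tilde\Psi_\varepsilon$ outside. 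Because $\tilde\Psi_\varepsilon\equiv 1$ on a full neighborhood of $\partial B_{\mathcal{H}}(x_\varepsilon,R)$ inside $\Lambda$, both pieces have value $1$ \emph{and} vanishing gradient at the interface, so the glued function is genuinely $C^{1,1}$. The supersolution property is then checked separately on $B_{\mathcal{H}}(x_\varepsilon,R)$, on $\Lambda\setminus B_{\mathcal{H}}(x_\varepsilon,R)$ (where $w_\varepsilon\equiv 1$ and $(1-\mu)\inf_\Lambda V$ beats $\varepsilon^2 H$), on $U\setminus\Lambda$ (where $(1-\mu)V\tilde\Psi_\varepsilon$ is bounded below and beats the $\varepsilon^2$ terms), and on $\R^N\setminus U$ (where one has equality).

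Your construction, by contrast, has a jump in the normal derivative at $\partial B_{\mathcal{H}}(x_\varepsilon,R_0)$: $\Phi_\varepsilon^{x_\varepsilon}$ has zero slope there but your minimizer $\tilde\Psi_\varepsilon$ does not. This yields only a Lipschitz function, not $C^{1,1}$, so the lemma as stated is not proved. Your distributional argument is correct and suffices for the comparison principle and hence for Proposition~\ref{Prop:decay}, but the paper's buffer-zone trick sidesteps the issue entirely and delivers the announced regularity with no extra work. Neither of your proposed fixes is as clean: mollification risks spoiling the supersolution inequality, and taking the direct minimizer on the full exterior loses the explicit pointwise form needed for~\eqref{barrierd}.
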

 \begin{proof}
Let $\Psi_\varepsilon$ be given by Lemma \ref{lemmaPsiepsilon}. Choose a set $U \subset \R^N$ such that $\Bar{\Lambda}
\subset U$, $0 \not \in \Bar{U}$ and $\Bar{U}$ is compact. 
Choose $\tilde{\Psi}_\varepsilon \in C^2(\R^N\setminus\lbrace 0 \rbrace) \cap H^1_{\text{loc}}(\R^N)$ such that
$\tilde{\Psi}_\varepsilon = \Psi_\varepsilon$ in $\R^N\setminus U$ and $\Tilde{\Psi}_\varepsilon = 1$ in
$\Lambda$. In view of the estimate of Lemma \ref{lemmaPsiepsilon}, one can also ensure that  
$\sup_{\varepsilon > 0} \norm{\tilde{\Psi}_\varepsilon}_{L^\infty(U)}< \infty$.  
Choose $R>0$ such that 
\begin{equation}
\label{conditionRb}
 R < \liminf_{\varepsilon \to 0} \dist(x_\varepsilon, \partial \Lambda).
\end{equation}
Let $\Phi^{x_\varepsilon}_{\varepsilon}$ be given by \eqref{def:Phieps} and set
\begin{align*}
 w_{\varepsilon}(x) := \left\{ \begin{array}{ll}  
  \Phi^{x_\varepsilon}_{\varepsilon}(x) & \textnormal{if}\ x \in  B_\mathcal{H}(x_{\varepsilon},R), \\
  \Tilde{\Psi}_\varepsilon(x) & \textnormal{if}\ x \in  \R^N\setminus B_\mathcal{H}(x_{\varepsilon},R).
 \end{array} 
 \right.
\end{align*}
By \eqref{conditionRb}, for $\varepsilon$ small enough, $ B_\mathcal{H}(x_{\varepsilon},R) \subset \Lambda$ so that
$w_{\varepsilon} \in
C^{1,1}(\R^N)$. Moreover, if $\varepsilon$ is small enough, Lemma~\ref{CompPeak} is applicable and in $
B_\mathcal{H}(x_{\varepsilon},R) \setminus  B_\mathcal{H}(x_{\varepsilon},\varepsilon \rho)$, we have
\begin{align*}
 -\varepsilon^2 \left( \Delta + H\right)  w_{\varepsilon} + (1-\mu) V w_{\varepsilon} \geq -\varepsilon^2 \Delta
\Phi^{x_\varepsilon}_{\varepsilon} +
(1-\mu) V \Phi^{x_\varepsilon}_{\varepsilon} \geq 0.
\end{align*}
In $\Lambda\setminus B_\mathcal{H}(x_{\varepsilon},R)$, one has
\begin{align*}
 -\varepsilon^2 \left( \Delta + H\right)  w_{\varepsilon} + (1-\mu) V w_{\varepsilon} = -\varepsilon^2  H + (1-\mu) \left(
\inf_{\Lambda} V \right) \geq 0,
\end{align*}
for $\varepsilon$ small enough.
In $U \setminus \Lambda$, one has
\begin{align*}
 -\varepsilon^2 \left( \Delta + H\right)  w_{\varepsilon} + (1-\mu) V w_{\varepsilon} = -\varepsilon^2  \left( \Delta + H\right)
\tilde{\Psi}_\varepsilon  + (1-\mu) V \tilde{\Psi}_\varepsilon \geq 0,
\end{align*}
for $\varepsilon$ small enough since $V\Tilde{\Psi}_\varepsilon$ is positive on $\overline{U}$.
Finally, in $\R^N\setminus U$, one has
\[
 -\varepsilon^2 \left( \Delta + H\right)  w_{\varepsilon} + (1-\mu) V w_{\varepsilon} = -\varepsilon^2  \left( \Delta +
H\right)\Psi_\varepsilon   + (1-\mu) V \Psi_\varepsilon = 0. 
\]
We set
\begin{align*}
 W_{\varepsilon}(x) := \frac{w_{\varepsilon}(x)}{\cosh \lambda \left( \frac{R}{\varepsilon} - \rho\right)},
\end{align*}
where $\lambda$ is chosen as in the previous lemma. It is standard to see that $W_{\varepsilon}$ satisfies properties
\eqref{barrierb} and \eqref{barrierc}. Statement \eqref{barrierd} follows from Lemma~\ref{lemmaPsiepsilon}.
The other conclusions follow from Lemma~\ref{lemmaImprovedDecay}.
\end{proof}

Thanks to the previous lemma, we obtain an upper bound on the solutions $(u_{\varepsilon})_{\varepsilon>0}$ of
\eqref{p2}. 

\begin{proposition}\label{Prop:decay}
 Suppose that the assumptions of Proposition \ref{Prop1} are satisfied. Let $(u_{\varepsilon})_{\varepsilon>0} \subset
H^1_{V,\mathcal{H}}(\R^N)$ be the positive solutions of \eqref{p2} found in Theorem \ref{Th1} and
$(x_{\varepsilon})_{\varepsilon>0} \subset \Lambda$ be such that
\begin{align*}
 \liminf_{\varepsilon\to 0} u_{\varepsilon}(x_{\varepsilon}) > 0.
\end{align*}
Then there exist $C>0$, $\lambda > 0$ and $\varepsilon_0>0$ such that for all $\varepsilon \in (0,\varepsilon_0)$,
\begin{align}\label{decay}
 u_{\varepsilon}(x) \leq C \exp{\left( -\frac{\lambda}{\varepsilon} \frac{d(x,S^k_{\varepsilon})}{1+d(x,S^k_{\varepsilon})}\right)}
( 1+\abs{x} )^{-(N-2)}, \hspace{0.5cm} x \in \R^N.
\end{align}
Moreover, \eqref{Wepsiloninfinityquadratic}, \eqref{WepsiloninfinitySuperquadratic}, \eqref{Wepsilon0quadratic} and
\eqref{Wepsilon0Superquadratic} in Lemma~\ref{lemma:barrier} hold with $u_\varepsilon$ in place of $W_\varepsilon$.
\end{proposition}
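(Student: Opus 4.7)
The plan is a comparison argument: combine the differential inequality satisfied by $u_\varepsilon$ outside a small ball (Lemma \ref{lemma:subsol}) with the super-solution $W_\varepsilon$ constructed in Lemma \ref{lemma:barrier}, and invoke the comparison principle of Proposition \ref{Th:comp} to conclude $u_\varepsilon \leq C W_\varepsilon$ globally.

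First, I would fix $\rho > 0$ large enough so that Lemma \ref{lemma:subsol} applies, namely
\[
 -\varepsilon^2 (\Delta u_\varepsilon + H u_\varepsilon) + (1-\mu) V u_\varepsilon \leq 0 \quad \text{in } \R^N \setminus B_\mathcal{H}(x_\varepsilon, \varepsilon \rho),
\]
for every $\varepsilon \in (0, \varepsilon_0)$. Invoking Proposition \ref{Prop1}(iii), $\liminf_{\varepsilon\to 0} d_\mathcal{H}(x_\varepsilon, \partial \Lambda) > 0$, so Lemma \ref{lemma:barrier} produces a family of barrier functions $W_\varepsilon$ on $\R^N \setminus B_\mathcal{H}(x_\varepsilon, \varepsilon \rho)$ satisfying the reversed differential inequality, the integrability conditions (ii), the boundary lower bound $W_\varepsilon \geq 1$ on $\partial B_\mathcal{H}(x_\varepsilon, \varepsilon \rho)$, and the pointwise estimate (iv).

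The key intermediate step is to find a constant $C > 0$, independent of $\varepsilon$, such that $u_\varepsilon \leq C$ on $\partial B_\mathcal{H}(x_\varepsilon, \varepsilon \rho)$. By the rescaling of Lemma \ref{convergentSubsequence}, the rescaled functions $v_n$ converge in $C^1_{\mathrm{loc}}(\R^{N-k})$, so in particular $\sup_{B_\mathcal{H}(x_\varepsilon, \varepsilon \rho)} u_\varepsilon$ is uniformly bounded. Writing $\tilde{W}_\varepsilon := C W_\varepsilon$, the linearity of the operator $-\varepsilon^2 (\Delta + H) + (1-\mu) V$ gives
\[
 -\varepsilon^2 \bigl( \Delta (\tilde{W}_\varepsilon - u_\varepsilon) + H (\tilde{W}_\varepsilon - u_\varepsilon) \bigr) + (1-\mu) V (\tilde{W}_\varepsilon - u_\varepsilon) \geq 0
\]
on $\R^N \setminus B_\mathcal{H}(x_\varepsilon, \varepsilon \rho)$, with $\tilde{W}_\varepsilon \geq u_\varepsilon$ on the boundary. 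The integrability conditions on $W_\varepsilon$ from Lemma \ref{lemma:barrier}(ii), together with the fact that $u_\varepsilon \in H^1_{V,\mathcal{H}}(\R^N) \subset \mathcal{D}^{1,2}(\R^N)$, ensure that the hypotheses on the negative part $(\tilde{W}_\varepsilon - u_\varepsilon)_-$ required by Proposition \ref{Th:comp} are met. Applying that comparison principle yields $u_\varepsilon \leq C W_\varepsilon$ on $\R^N \setminus B_\mathcal{H}(x_\varepsilon, \varepsilon \rho)$.

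Substituting the pointwise estimate from Lemma \ref{lemma:barrier}(iv) and noting that $d(x, S^k_\varepsilon) = d_\mathcal{H}(x, x_\varepsilon)$ by definition of $S^k_\varepsilon$ and the invariance of $u_\varepsilon$, we obtain \eqref{decay} on the exterior region. Inside $B_\mathcal{H}(x_\varepsilon, \varepsilon \rho)$ the exponential factor in \eqref{decay} is bounded below by a positive constant independent of $\varepsilon$, so the uniform $L^\infty$ bound already established yields \eqref{decay} there as well (the factor $(1+\abs{x})^{-(N-2)}$ is controlled since $x_\varepsilon$ remains in the bounded set $\Lambda$). The improved decays under the quadratic or sub-quadratic hypotheses on $V$ at $0$ or $\infty$ follow by the very same comparison argument using the refined estimates \eqref{Wepsiloninfinityquadratic}--\eqref{Wepsilon0Superquadratic} of Lemma \ref{lemma:barrier}. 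The main obstacle is the uniform $L^\infty$ bound on the boundary $\partial B_\mathcal{H}(x_\varepsilon, \varepsilon \rho)$; it is handled by the blow-up analysis of Lemma \ref{convergentSubsequence}, which gives uniform $W^{2,q}$ and hence $L^\infty$ bounds in a neighbourhood whose rescaled size goes to infinity.
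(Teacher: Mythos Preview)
Your argument is correct and follows exactly the paper's route: the linear inequation from Lemma~\ref{lemma:subsol}, the uniform $L^\infty$ bound on $B_\mathcal{H}(x_\varepsilon,\varepsilon\rho)$ via Lemma~\ref{convergentSubsequence}, and then Proposition~\ref{Th:comp} against the barrier $W_\varepsilon$ of Lemma~\ref{lemma:barrier}. One small imprecision: $d(x,S^k_\varepsilon)$ and $d_\mathcal{H}(x,x_\varepsilon)$ coincide only when $x'_\varepsilon=0$; in general they differ by at most $\abs{x'_\varepsilon}=o(\varepsilon)$ (Proposition~\ref{Prop1}(ii)), which is harmless for the exponential factor after adjusting the constant $C$.
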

\begin{proof}
By Lemma \ref{lemma:subsol}, there exist $\rho > 0$ and $\varepsilon_0>0$ such that for all $\varepsilon \in (0,\varepsilon_0)$,
the
solution $u_{\varepsilon}$ satisfies inequation \eqref{lowsol}. Further, $\norm{u_{\varepsilon}}_{L^\infty
(B_\mathcal{H}(x_{\varepsilon},\varepsilon \rho))}$ is bounded as $\varepsilon \to 0$ in view of
Lemma~\ref{convergentSubsequence}.
Let
$(W_{\varepsilon})_{\varepsilon}$ be the family of barrier functions given by Lemma \ref{lemma:barrier}. By
Proposition~\ref{Th:comp}, we have
\begin{align*}
 u_{\varepsilon}(x) \leq \norm{u_{\varepsilon}}_{L^\infty (B_\mathcal{H}(x_{\varepsilon},\varepsilon \rho))} W_{\varepsilon}(x)
\hspace{0.5cm} \text{in}\ \R^N\setminus B_\mathcal{H} (x_{\varepsilon},\varepsilon \rho),
\end{align*}
and the conclusion comes from Lemma~\ref{lemma:barrier}.
\end{proof}

\bigbreak

We are now in a position to prove Theorem \ref{Th:main}.

\begin{proof}[Proof of Theorem \ref{Th:main}]
We know from Theorem \ref{Th1} that the modified equation \eqref{p2} possesses a positive solution $u_{\varepsilon} \in
H^1_{V,\mathcal{H}}(\R^N)$. In order to prove that for $\varepsilon$ small enough, this solution actually solves
\eqref{p1}, it suffices to show that, for every $x \in (\R^N \setminus \{0\})\setminus \Lambda$, one
has
\[
 K(x) \frac{f(u_{\varepsilon}(x))}{u_{\varepsilon}(x)} \leq \varepsilon^2 H(x) + \mu V(x).
\]

Assume that $V$ and $K$ satisfy $(\mathcal{G}_{\infty}^1)$ and $(\mathcal{G}_{0}^1)$, by Proposition \ref{Prop:decay}
and
assumptions $(f_4)$ and $(f_1)$, if $\varepsilon > 0$ is small enough, we have for all $x \in \R^N\setminus \Lambda$,
\begin{align*}
  K(x) \frac{f(u_{\varepsilon}(x))}{u_{\varepsilon}(x)} &\leq K(x) \frac{f\left(C
e^{-\frac{\lambda}{\varepsilon}} (1+\abs{x})^{-(N-2)}\right)}{C e^{-\frac{\lambda}{\varepsilon}} (1+\abs{x})^{-(N-2)}} \\ 
&\leq C e^{-\frac{\lambda}{\varepsilon}(q-1)} (1+\abs{x})^{\sigma-(N-2)(q-1)} \\ 
&\leq \frac{\varepsilon^2 \kappa}{\abs{x}^2 \left( (\log \abs{x})^2+1 \right)^{\frac{1+\beta}{2}}} = \varepsilon^2 H(x).
\end{align*}
The other cases can be treated in a similar way.
\end{proof}

In some settings, it is interesting to determine whether the solutions are in $L^2$. We obtain as a byproduct the following

\begin{corollary}\label{cor:L2}
 Let $u_{\varepsilon}$ be the solution of \eqref{p1} found in Theorem \ref{Th:main}. If $N \geq 5$ or $\liminf_{\abs{x} \to \infty} \abs{x}^2V(x) > 0$, then, for $\varepsilon$
small enough, $u_{\varepsilon} \in L^2(\R^N)$.
\end{corollary}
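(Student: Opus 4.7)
The plan is to read off $L^2$ integrability directly from the pointwise decay estimates already established for $u_\varepsilon$ in Theorem~\ref{Th:main} and Proposition~\ref{Prop:decay}, splitting on the two hypotheses. Note first that since $(1+\abs{x}^2)^{-(N-2)/2}\leq 1$ and the exponential prefactor in the decay estimate is bounded, $u_\varepsilon$ is bounded on $\R^N$; so $u_\varepsilon^2$ is automatically locally integrable, and the only issue is the behaviour at infinity.

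In the first case, when $N\geq 5$, I would apply the baseline decay from Theorem~\ref{Th:main}:
\[
 u_\varepsilon(x)^2 \leq C\bigl(1+\abs{x}^2\bigr)^{-(N-2)}.
\]
Since $N\geq 5$ gives $2(N-2) > N$, the right-hand side is integrable on $\R^N$, and $u_\varepsilon \in L^2(\R^N)$ follows immediately.

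In the second case, when $\liminf_{\abs{x}\to\infty} V(x)\abs{x}^2 > 0$, I would invoke the sharper decay coming from item \eqref{Wepsiloninfinityquadratic} of Lemma~\ref{lemma:barrier}, which Proposition~\ref{Prop:decay} transfers from $W_\varepsilon$ to $u_\varepsilon$: there exist $\nu > 0$ and $C > 0$ such that, for $\varepsilon$ small enough,
\[
 u_\varepsilon(x) \leq C\bigl(1+\abs{x}\bigr)^{-\nu/\varepsilon}.
\]
Choosing $\varepsilon$ small enough so that $2\nu/\varepsilon > N$ makes $u_\varepsilon^2$ decay faster than $(1+\abs{x})^{-N}$ at infinity, hence integrable on $\R^N$.

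I do not expect a real obstacle here: all the analytic work has been absorbed into the barrier construction of Lemma~\ref{lemma:barrier} and the comparison argument of Proposition~\ref{Prop:decay}. The proof of the corollary is just a matter of book-keeping on the exponents in the two regimes and observing that in the quadratic-decay regime for $V$, the exponent $\nu/\varepsilon$ can be made as large as we want by shrinking $\varepsilon$, which compensates for the low-dimensional cases $N=3,4$ not covered by the first alternative.
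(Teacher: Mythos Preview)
Your proposal is correct and matches the paper's approach exactly: the paper's proof is the single sentence ``This follows immediately from Proposition~\ref{Prop:decay},'' and what you have written is precisely the unpacking of that sentence into the two cases, reading off the required integrability from the decay exponents in \eqref{decay} and in item~\eqref{Wepsiloninfinityquadratic} of Lemma~\ref{lemma:barrier}.
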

\begin{proof}
This follows immediately from Proposition~\ref{Prop:decay}.
\end{proof}

\section{The two-dimensional case}\label{rem:N=2}
 In dimension $N = 2$, the method has to be modified because the classical Hardy inequality fails on unbounded domains
of $\R^2$. Let us recall the Hardy-type inequality that was proved in \cite{MVS}*{Lemma 6.1}:
\begin{lemma}
\label{lemmaHardyMV}
Let $R > r$. Then there exists $C > 0$ such that for every $u \in \mathcal{D}(\R^2)$,
\[
 \int_{\R^2} \abs{\nabla u}^2 + C \int_{B(0,R)\setminus B(0,r)} u^2 \geq \frac{1}{4}
\int_{\R^2\setminus B(0, R)} \frac{u^2(x)}{\abs{x}^2 \left(\log\frac{\abs{x}}{r}\right)^2}\: dx.
\]
\end{lemma}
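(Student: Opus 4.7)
The plan is to reduce the inequality to the classical one-dimensional Hardy inequality
\[
 \int_0^\infty (f'(t))^2\, dt \ge \frac{1}{4}\int_0^\infty \frac{f(t)^2}{t^2}\, dt,
\]
valid for $f \in C^1_c([0,\infty))$ with $f(0)=0$. The point is that after the radial substitution $t = \log(\rho/r)$ with $\rho = \abs{x}$, the weight $\abs{x}^{-2}(\log(\abs{x}/r))^{-2}$ becomes the standard Hardy weight $t^{-2}$, and the region $\{\abs{x} > R\}$ becomes $\{t > T\}$ with $T = \log(R/r) > 0$. The obstruction is that the one-dimensional inequality demands vanishing at $t=0$, i.e.\ at $\abs{x}=r$, which I would arrange by cutting off $u$ near the origin without losing the sharp constant on the gradient term.

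First I would fix a smooth function $\chi : \R^2 \to [0,1]$ with $\chi \equiv 0$ on $\overline{B(0,r)}$ and $\chi \equiv 1$ on $\R^2 \setminus B(0,R)$. A direct expansion combined with integration by parts (using that $u \in \mathcal{D}(\R^2)$ is compactly supported, so that $\int 2 \chi u\, \nabla u \cdot \nabla \chi = -\int u^2 \bigl(\chi \Delta \chi + \abs{\nabla \chi}^2 \bigr)$) yields the identity
\[
\int_{\R^2}\abs{\nabla(\chi u)}^2\, dx \;=\; \int_{\R^2}\chi^2 \abs{\nabla u}^2\, dx \;-\; \int_{\R^2} \chi\,\Delta\chi\, u^2\, dx.
\]
Since $0\le \chi^2 \le 1$ and $\chi \Delta \chi$ is bounded and supported in $B(0,R)\setminus B(0,r)$, this gives
\[
 \int_{\R^2}\abs{\nabla(\chi u)}^2\, dx \;\le\; \int_{\R^2}\abs{\nabla u}^2\, dx + C \int_{B(0,R)\setminus B(0,r)} u^2\, dx,
\]
for a constant $C$ depending only on $r$, $R$, and the choice of $\chi$. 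The crucial point is that the coefficient in front of $\int \abs{\nabla u}^2$ remains exactly $1$.

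Next I would pass to polar coordinates $(\rho,\theta)$ and substitute $t = \log(\rho/r)$. Setting $\tilde v_\theta(t) := (\chi u)(re^t,\theta)$, one has $\tilde v_\theta(0)=0$ and
\[
 \int_0^\infty (\tilde v_\theta'(t))^2\, dt = \int_r^\infty (\partial_\rho(\chi u))^2 \rho\, d\rho, \qquad \int_T^\infty \frac{\tilde v_\theta(t)^2}{t^2}\, dt = \int_R^\infty \frac{u(\rho,\theta)^2}{\rho(\log(\rho/r))^2}\, d\rho,
\]
since $\chi \equiv 1$ for $\rho \ge R$. Applying the one-dimensional Hardy inequality to each $\tilde v_\theta$ and then integrating in $\theta \in [0,2\pi)$ produces
\[
 4\int_{\R^2}\abs{\nabla(\chi u)}^2\, dx \;\ge\; 4\int_{\R^2 \setminus B(0,r)} (\partial_\rho(\chi u))^2\, dx \;\ge\; \int_{\R^2\setminus B(0,R)} \frac{u(x)^2}{\abs{x}^2\bigl(\log(\abs{x}/r)\bigr)^2}\, dx.
\]
Combining this chain with the gradient estimate from the previous paragraph and dividing by $4$ yields the claim.

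The main obstacle is preserving the sharp constant $\tfrac{1}{4}$: any naive Young-type splitting such as $\abs{\nabla(\chi u)}^2 \le (1+\eta)\chi^2 \abs{\nabla u}^2 + (1+1/\eta)\, u^2 \abs{\nabla \chi}^2$ would produce a spurious factor $(1+\eta) > 1$ in front of $\int\abs{\nabla u}^2$. The integration-by-parts identity above is precisely what allows this factor to be $1$, leaving only a controlled $L^2$ remainder on the annulus $B(0,R)\setminus B(0,r)$. The remaining ingredients (the change of variables and the classical one-dimensional Hardy inequality) are entirely routine.
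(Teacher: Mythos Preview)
Your argument is correct. The paper itself does not prove this lemma; it merely recalls it as \cite{MVS}*{Lemma 6.1}, so there is no in-paper proof to compare against. Your self-contained derivation---cutting off with $\chi$, using the integration-by-parts identity
\[
\int_{\R^2}\abs{\nabla(\chi u)}^2 = \int_{\R^2}\chi^2\abs{\nabla u}^2 - \int_{\R^2}\chi\,\Delta\chi\,u^2
\]
to keep the gradient coefficient equal to $1$, and then reducing to the one-dimensional Hardy inequality via the logarithmic change of variable $t=\log(\rho/r)$---is a clean and standard way to obtain the result with the sharp constant $\tfrac14$. One cosmetic point: you may as well take $\chi\equiv 0$ on a slightly larger ball $B(0,r')$ with $r<r'<R$ so that $\tilde v_\theta$ vanishes in a neighbourhood of $t=0$, which puts you squarely in the $C^1_c([0,\infty))$ setting for the one-dimensional inequality; this changes nothing in the estimate since the support of $\nabla\chi$ remains in the annulus.
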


We deduce therefrom
\begin{lemma}
\label{lemmaNewHardy}
If $V \in C(\R^2\setminus \{0\})$ is nonnnegative and non identically $0$, then there exists $\kappa_0 > 0$ such that for $\varepsilon > 0$ sufficiently small, for every $u \in \mathcal{D}(\R^2)$, 
\[
 \kappa_0 \int_{\R^2} \frac{u^2(x)}{\abs{x}^2 \Bigl(1+(\log \abs{x})^2\Bigr)}\:dx\leq 
\int_{\R^2} \varepsilon^2\abs{\nabla u}^2 + V u^2.
\]
\end{lemma}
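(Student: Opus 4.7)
The plan is first to establish the $\varepsilon$-free inequality
\[
\int_{\R^2}\frac{u^2(x)}{\abs{x}^2 \bigl(1+(\log\abs{x})^2\bigr)}\,dx \;\leq\; A\int_{\R^2}\abs{\nabla u}^2 + B\int_{\R^2} V u^2,
\]
with constants $A,B>0$ depending only on $V$, from which the stated inequality follows upon choosing $\kappa_0 := \min(\varepsilon^2/A,\,1/B)$. Since $V$ is continuous, nonnegative and not identically zero, I first pick $x_0 \in \R^2\setminus\{0\}$, $\delta>0$ and $v_0>0$ such that $V \geq v_0$ on $\overline{B(x_0,\delta)} \subset \R^2\setminus\{0\}$. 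I then fix $0<r<\min(1/e,\,\abs{x_0}-\delta)$ and $R>\max(e,\,\abs{x_0}+\delta)$, and split $\R^2$ into three regions: the exterior $E:=\{\abs{x}>R\}$, the interior $I:=\{\abs{x}<1/R\}$, and the bounded middle region $\Omega := \overline{B(0,\max(R,1/r))}\setminus B(0,\min(r,1/R))$.

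For $E$, Lemma~\ref{lemmaHardyMV} applied with the above $r,R$, combined with the pointwise comparison $(\log(\abs{x}/r))^2 \leq C_r(1+(\log\abs{x})^2)$ valid on $E$ (via $(a+b)^2\le 2a^2+2b^2$), gives
\[
\int_{E}\frac{u^2}{\abs{x}^2\bigl(1+(\log\abs{x})^2\bigr)}\,dx \leq C_1\Bigl(\int_{\R^2}\abs{\nabla u}^2 + \int_{B(0,R)\setminus B(0,r)} u^2\Bigr).
\]
For $I$, I use the fact that in dimension $2$ both the Dirichlet energy $\int\abs{\nabla u}^2$ and the measure $dx/(\abs{x}^2(1+(\log\abs{x})^2))$ are invariant under the Kelvin transform $x\mapsto y=x/\abs{x}^2$: the conformal invariance of the energy is classical, and the invariance of the weight is a direct computation using $\abs{y}=1/\abs{x}$, $\log\abs{y}=-\log\abs{x}$ and $dy=\abs{x}^{-4}dx$. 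Applying the exterior estimate to $\tilde{u}(y):=u(y/\abs{y}^2)$ and pulling back to $x$-coordinates yields the analogous bound
\[
\int_{I}\frac{u^2}{\abs{x}^2\bigl(1+(\log\abs{x})^2\bigr)}\,dx \leq C_2\Bigl(\int_{\R^2}\abs{\nabla u}^2 + \int_{B(0,1/r)\setminus B(0,1/R)} u^2\Bigr).
\]
On $\Omega$ the weight is pointwise bounded by some constant $C_3$, so the middle contribution is at most $C_3\int_\Omega u^2$.

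To close the argument I control the three bounded-annulus $L^2$-integrals (each contained in $\Omega$) via a Poincar\'e-type inequality with an $L^2$-observation: for any bounded, connected, Lipschitz domain $\tilde{\Omega}\subset\R^N$ and any $\omega\subset\tilde{\Omega}$ of positive measure, there exists $C>0$ such that
\[
\int_{\tilde{\Omega}} u^2 \leq C\Bigl(\int_{\tilde{\Omega}}\abs{\nabla u}^2 + \int_\omega u^2\Bigr) \quad \text{for all } u\in H^1(\tilde{\Omega}).
\]
The proof is standard: were this false, a sequence $(u_n)$ of unit $L^2(\tilde{\Omega})$-norm with right-hand side tending to $0$ would, by Rellich--Kondrachov compactness, converge in $L^2(\tilde{\Omega})$ to a nonzero constant vanishing on $\omega$, a contradiction. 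Applied with $\tilde{\Omega}=\Omega$ and $\omega=B(x_0,\delta)$, together with $\int_{B(x_0,\delta)}u^2 \leq v_0^{-1}\int_{\R^2} V u^2$, this yields the desired control. Summing the three regional bounds produces the displayed reduction, and the lemma follows.

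The most delicate point is the Kelvin-transform step, in particular checking the weight invariance and pushing the bounded-annulus error term back to $x$-coordinates; but both are just changes of variables. The remainder is assembly of standard tools (Lemma~\ref{lemmaHardyMV} and a compactness-based Poincar\'e inequality).
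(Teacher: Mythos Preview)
Your argument follows the same route as the paper's: apply Lemma~\ref{lemmaHardyMV} for large $\abs{x}$, transfer it to small $\abs{x}$ via the conformal inversion $x\mapsto x/\abs{x}^2$ (using that both the Dirichlet integral and the measure $\abs{x}^{-2}\bigl(1+(\log\abs{x})^2\bigr)^{-1}\,dx$ are invariant in dimension~$2$), and control the residual bounded-annulus $L^2$ terms by a Poincar\'e-type inequality exploiting that $V$ is bounded below on some ball. The paper is terser---its Poincar\'e step is asserted rather than proved by compactness---but the structure is identical.

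One caveat on your final reduction: your choice $\kappa_0=\min(\varepsilon^2/A,\,1/B)$ depends on $\varepsilon$ and tends to $0$ as $\varepsilon\to 0$, so it does not deliver the statement as literally written, which asks for a \emph{single} $\kappa_0$ valid for all sufficiently small $\varepsilon$. In fact the literal statement cannot hold for $V$ merely nonnegative and not identically zero: take $u$ a bump supported where $V$ vanishes and let $\varepsilon\to 0$. What both you and the paper actually establish is the $\varepsilon$-free inequality
\[
\int_{\R^2}\frac{u^2}{\abs{x}^2\bigl(1+(\log\abs{x})^2\bigr)}\,dx\;\le\;C\int_{\R^2}\bigl(\abs{\nabla u}^2+Vu^2\bigr),
\]
and this is precisely what is used downstream: multiplying by $\varepsilon^2$ gives $\varepsilon^2\kappa_0\int\frac{u^2}{\abs{x}^2(1+(\log\abs{x})^2)}\le\int(\varepsilon^2\abs{\nabla u}^2+Vu^2)$ for $\varepsilon\le 1$, which with $\kappa<\kappa_0$ yields positivity of $-\varepsilon^2(\Delta+H)+V$. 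The paper's own concluding sentence (``This brings the conclusion when $\varepsilon>0$ is small enough'') glosses over the same point.
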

\begin{proof}
One sees that by the conformal transformation $x \mapsto \frac{x}{\abs{x}^2}$, Lemma~\ref{lemmaHardyMV} becomes 
\[
 \int_{\R^2} \abs{\nabla u}^2 + C \int_{B(0,R)\setminus B(0,r)} u^2 \geq \frac{1}{4}
\int_{B(0, r)} \frac{u^2(x)}{\abs{x}^2 \left(\log \abs{x}\right)^2}\: dx.
\]
Therefore, one has
\[
 \int_{\R^2} \frac{u^2(x)}{\abs{x}^2 \Bigl(1+(\log \abs{x})^2\Bigr)}\:dx\leq 
C\Bigl(\int_{\R^2} \abs{\nabla u}^2+\int_{B(0, 2)\setminus B(0, 1/2)} u^2\Bigr).
\]

Since $V$ is continuous and does not vanish identically, there exists $\Bar{x} \in \R^2$ and $\Bar{r} > 0$, 
such that $\inf_{B(\Bar{x},\Bar{r})} V > 0$. Hence, there exists $C > 0$ such that 
\[
 \int_{B(0,2)\setminus B(0,1/2)} u^2 \leq C\Bigl(\int_{\R^2} \abs{\nabla u}^2 + V \abs{u}^2\Bigr).
\]

Bringing the inequalities together, there exists $C > 0$ such that 
\[
 \int_{\R^2} \frac{u^2(x)}{\abs{x}^2 \bigl(1+(\log \abs{x})^2\bigr)}\:dx\leq 
C\int_{\R^2} (\abs{\nabla u}^2+V u^2).
\]
This brings the conclusion when $\varepsilon > 0$ is small enough.
%
%
%
\end{proof}

The space $H^1_V(\R^2)$ can thus be defined as in the case $N>2$ as the closure of $\mathcal{D}(\R^2)$ with respect to the
norm defined by \eqref{eqNorm}.

The penalization potential $H : \R^2 \to \R$ is defined by
\begin{align*}
 H(x) := \frac{\kappa}{\abs{x}^2 \left( 1+(\log \abs{x})^2 \right)^{\frac{2+\beta}{2}}}.
\end{align*}
where $\beta > 0$ and $\kappa \in (0, \kappa_0)$
We see that
\[
 H(x) \leq \frac{\kappa}{\abs{x}^2 \left( 1+(\log \abs{x})^2 \right)}.
\]
Together with Lemma~\eqref{lemmaNewHardy}, this ensures positivity of the quadratic form associated to $-\varepsilon^2
(\Delta+H) + V$. 

As in the case $N>2$, this inequality implies the following comparison principle.
\begin{proposition}\label{Th:comp2}
Let $\Omega \subset \R^2$ be a smooth domain. Let $v,w \in H^1_{\textnormal{loc}}(\Omega)\cap C(\overline{\Omega})$ be such
that $\nabla (w-v)_- \in L^2(\Omega)$, $(w-v)_-/(\abs{x}(1+\lvert\log \abs{x}\rvert)) \in L^2(\Omega)$ and
\[
 - \varepsilon^2 (\Delta + H) w + V w \geq - \varepsilon^2 (\Delta + H) v + V v, \hspace{1cm}  \text{ in } \Omega.
\]
If $\partial\Omega \neq \emptyset$, assume also that $w \geq v$ on $\partial\Omega$. Then $w \geq v$ in $\Omega$.
\end{proposition}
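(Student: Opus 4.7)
The plan is to mirror the one-line proof of Proposition~\ref{Th:comp}, replacing the classical Hardy inequality by Lemma~\ref{lemmaNewHardy}. Set $\varphi := (w-v)_-$. By hypothesis $\varphi \geq 0$, $\nabla \varphi \in L^2(\Omega)$, $\varphi/(\abs{x}(1+\abs{\log\abs{x}})) \in L^2(\Omega)$, and $\varphi$ vanishes on $\partial\Omega$; extending by zero to $\R^2$ and approximating in the natural weighted Sobolev space yields $\varphi$ as an admissible test function.

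Multiplying the differential inequality by $\varphi$, integrating by parts on $\Omega$ (the boundary integral vanishes since $\varphi = 0$ on $\partial\Omega$), and using $\nabla(w-v) = -\nabla\varphi$ a.e.\ on $\{\varphi > 0\}$ together with $(w-v)\varphi = -\varphi^2$ on $\supp \varphi$, we arrive at
\[
 \int_{\Omega} \varepsilon^2 \bigl(\abs{\nabla \varphi}^2 - H \varphi^2\bigr) + V \varphi^2 \leq 0.
\]
Since $H(x) \leq \kappa/(\abs{x}^2(1+(\log \abs{x})^2))$ with $\kappa < \kappa_0$, Lemma~\ref{lemmaNewHardy} gives, for $\varepsilon$ sufficiently small,
\[
 \varepsilon^2 \int_{\R^2} H \varphi^2 \leq \frac{\varepsilon^2 \kappa}{\kappa_0} \int_{\R^2} \bigl(\varepsilon^2 \abs{\nabla \varphi}^2 + V \varphi^2\bigr) \leq \frac{1}{2}\int_{\R^2} \bigl(\varepsilon^2 \abs{\nabla \varphi}^2 + V \varphi^2\bigr).
\]
Substituting into the previous display forces $\int_\Omega (\varepsilon^2 \abs{\nabla \varphi}^2 + V\varphi^2) \leq 0$, hence $\nabla \varphi \equiv 0$ and $V \varphi \equiv 0$. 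Since $V$ is continuous, nonnegative and not identically zero, and $\varphi$ vanishes on $\partial \Omega$ (or decays suitably when $\Omega = \R^2$, by the weighted $L^2$ assumption), we conclude $\varphi \equiv 0$, i.e.\ $w \geq v$ in $\Omega$.

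The only delicate point is the justification of the integration by parts under only $H^1_{\textnormal{loc}}$ regularity, when $\Omega$ may be unbounded and may touch the singularity of $H$ at the origin. This will be handled by a two-sided cut-off $\chi_{r,R}$ supported in $B(0,R) \setminus B(0,r)$: the cross terms of the form $\int \varphi \nabla \chi_{r,R} \cdot \nabla(w-v)$ are bounded by Cauchy--Schwarz using $\nabla \varphi \in L^2$ and $\varphi/(\abs{x}(1+\abs{\log\abs{x}})) \in L^2$, which is precisely the weight in Lemma~\ref{lemmaNewHardy}, so that these errors tend to $0$ as $r \to 0$ and $R \to \infty$. This step is the main obstacle because the standard density argument of the classical Hardy setting no longer applies directly in dimension two; tailoring the weighted integrability assumption to match the new Hardy weight is exactly what makes the argument close.
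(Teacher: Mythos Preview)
Your approach is correct and matches what the paper intends: the paper does not give an explicit proof of Proposition~\ref{Th:comp2} but simply says ``As in the case $N>2$, this inequality implies the following comparison principle'', referring back to the one-line proof of Proposition~\ref{Th:comp} with the positivity of $-\varepsilon^2(\Delta+H)+V$ (which now comes from Lemma~\ref{lemmaNewHardy}) in place of~\eqref{positivity}. Your write-up supplies exactly that argument, with the additional care (cut-off justification, density) appropriate to the two-dimensional weighted setting; this goes somewhat beyond what the paper spells out but is entirely in the same spirit.
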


One continues the proof of Theorem~\ref{Th:main} as in the case $N \geq 3$. 
In Proposition~\ref{prop:PS}, one takes $A_{\lambda} := \overline{B(0, e^{e^\lambda)}} 
\setminus B(0, e^{-e^{\lambda}})$ and
 \begin{align*}
  \eta_{\lambda}(x) := \zeta\left( \frac{\log \abs{\log \abs{x}}}{\lambda} \right).
 \end{align*}
One then obtains estimate \eqref{ps5} by using Lemma~\ref{lemmaNewHardy} instead of Hardy's inequality.
The only other notable difference lies in the choice of the function $W$ in the proof of Lemma~\ref{lemmaPsiepsilon}, 
where one follows the construction of \cite{MVS}*{Lemma 6.3}, i.e.
\[
  W(x)=\beta (\beta+1)-\kappa\abs{\log \abs{x}}^{-\beta}.
\]

\begin{bibdiv}
\begin{biblist}

\bib{AM}{book}{
      author={Ambrosetti, Antonio},
      author={Malchiodi, Andrea},
       title={{Perturbation methods and semilinear elliptic problems on $\R\sp
  n$}},
      series={Progress in Mathematics},
   publisher={Birkh\"{a}user Verlag},
        date={2006},
      volume={240},
}

\bib{AM2}{incollection}{
      author={Ambrosetti, Antonio},
      author={Malchiodi, Andrea},
       title={Concentration phenomena for {NLS}: recent results and new
  perspectives},
        date={2007},
   booktitle={Perspectives in nonlinear partial differential equations},
      series={Contemp. Math. 446},
   publisher={Amer. Math. Soc., Providence},
       pages={19\ndash 30},
}

\bib{AMN}{article}{
      author={Ambrosetti, Antonio},
      author={Malchiodi, Andrea},
      author={Ni, Wei-Ming},
       title={Singularly perturbed elliptic equations with symmetry: Existence
  of solutions concentrating on spheres, part {I}},
        date={2003},
     journal={Comm. Math. Phys.},
      volume={235},
       pages={427\ndash 466},
}

\bib{AR}{article}{
      author={Ambrosetti, Antonio},
      author={Ruiz, David},
       title={Radial solutions concentrating on spheres of nonlinear
  {S}chr\"{o}dinger equations with vanishing potentials.},
        date={2006},
     journal={Proc. Roy. Soc. Edinburgh Sect. A},
      volume={136},
      number={5},
       pages={889\ndash 907},
}

\bib{BaDA}{article}{
      author={Badiale, Marino},
      author={D'Aprile, Teresa},
       title={Concentration around a sphere for a singularly perturbed
  {S}chr\"{o}dinger equations},
        date={2002},
     journal={Nonlin. Anal. T.M.A.},
      volume={49},
       pages={947\ndash 985},
}

\bib{BaP}{article}{
      author={Bartsch, Thomas},
      author={Peng, Shuangjie},
       title={Semiclassical symmetric {S}chr\"{o}dinger equations : existence
  of solutions concentrating simultaneously on several spheres},
        date={2007},
     journal={Z. Angew. Math. Phys},
      volume={58},
      number={5},
       pages={778\ndash 804},
}

\bib{BeDA}{article}{
      author={Benci, Vieri},
      author={D'Aprile, Teresa},
       title={The semiclassical limit of the nonlinear {S}chr\"{o}dinger
  equation in a radial potential},
        date={2002},
     journal={J. Differential Equations},
      volume={184},
       pages={109\ndash 138},
}

\bib{BVS}{article}{
      author={Bonheure, Denis},
      author={Van~Schaftingen, Jean},
       title={Bound state solutions for a class of nonlinear {S}chr\"{o}dinger
  equations},
        date={2008},
     journal={Rev. Mat. Iberoamericana},
      volume={24},
       pages={297\ndash 351},
}

\bib{ByW}{article}{
      author={Byeon, Jaeyoung},
      author={Wang, Zhi-Qiang},
       title={Spherical semiclassical states of a critical frequency for
  {S}chr\"{o}dinger equations with decaying potentials},
        date={2006},
     journal={J. Eur. Math. Soc.},
      volume={8},
      number={2},
       pages={217\ndash 228},
}

\bib{DF1}{article}{
      author={Del~Pino, Manuel},
      author={Felmer, Patricio},
       title={Local mountain passes for semilinear elliptic problems in
  unbounded domains},
        date={1996},
     journal={Calc. Var. Partial Differential Equations},
      volume={4},
      number={2},
       pages={121\ndash 137},
}

\bib{DKW}{article}{
      author={Del~Pino, Manuel},
      author={Kowalczyk, Michal},
      author={Wei, Juncheng},
       title={Concentration on curves for nonlinear {S}chr\"{o}dinger
  equations},
        date={2007},
     journal={Comm. Pure Appl. Math.},
      volume={60},
      number={1},
       pages={113\ndash 146},
}

\bib{MMM}{article}{
      author={Mahmoudi, F},
      author={Malchiodi, A},
      author={Montenegro, M},
       title={Solutions to the nonlinear {S}chr\"{o}dinger equation carrying
  momentum along a curve},
        date={2008},
     journal={C. R. Acad. Sci. Paris},
      volume={Ser. I 346},
       pages={33\ndash 38},
}

\bib{MaMa}{article}{
      author={Mahmoudi, Fethi},
      author={Malchiodi, Andrea},
       title={Concentration on minimal submanifolds for a singularly perturbed
  {N}eumann problem},
        date={2007},
        ISSN={0001-8708},
     journal={Adv. Math.},
      volume={209},
      number={2},
       pages={460\ndash 525},
      review={\MR{MR2296306 (2008d:35062)}},
}

\bib{Mal}{article}{
      author={Malchiodi, Andrea},
       title={Construction of multidimensional spike-layers},
        date={2006},
        ISSN={1078-0947},
     journal={Discrete Contin. Dyn. Syst.},
      volume={14},
      number={1},
       pages={187\ndash 202},
      review={\MR{MR2170309 (2006f:35095)}},
}

\bib{MP}{article}{
      author={Molle, Riccardo},
      author={Passaseo, Donato},
       title={Concentration phenomena for solutions of superlinear elliptic
  problems},
        date={2006},
     journal={Ann. Inst. H. Poincar\'e Anal. Non Lin\'eaire},
      volume={23},
       pages={63\ndash 84},
}

\bib{MVS}{article}{
      author={Moroz, Vitaly},
      author={Van~Schaftingen, Jean},
       title={Bound state stationary solutions for nonlinear {S}chr\"odinger
  equations with fast decaying potentials},
        date={2009},
     journal={Calc. Var. Partial Differential Equations},
}

\bib{Palais}{article}{
      author={Palais, Richard~S.},
       title={The principle of symmetric criticality},
        date={1979},
        ISSN={0010-3616},
     journal={Comm. Math. Phys.},
      volume={69},
      number={1},
       pages={19\ndash 30},
         url={http://projecteuclid.org/getRecord?id=euclid.cmp/1103905401},
      review={\MR{MR547524 (81c:58026)}},
}

\end{biblist}
\end{bibdiv}

\end{document}